\documentclass[a4paper,12pt]{article}

\usepackage[top=3.0cm,bottom=3.0cm,left=2.25cm,right=2.25cm]{geometry}
\usepackage{amsmath,amsthm,mathrsfs,graphicx,amsfonts}
\usepackage{bm}
\usepackage{cases}
\usepackage[english]{babel}
\usepackage{latexsym}
\usepackage{txfonts}
\usepackage[numbers,sort&compress]{natbib}
\usepackage[natural]{xcolor}
\usepackage{rotating}
\usepackage{mathtools}
\usepackage{enumitem}

\makeatletter
\def\@maketitle{
  \newpage
  \null
  \vskip 2em
  \begin{center}
  \let\footnote\thanks
    {\LARGE \@title \par}
    \vskip 1.5em
    {\large
      \lineskip .5em
      \parbox{\textwidth}{\centering\@author}
      \par
    }
    \vskip 1em
    {\large \@date}
  \end{center}
  \par
  \vskip 1.5em}
\makeatother

\newtheorem{lem}{Lemma}[section]
\newtheorem{thm}[lem]{Theorem}

\newtheorem{prob}[lem]{Problem}
\newtheorem{claim}[lem]{\indent Claim}
\newtheorem{conj}[lem]{Conjecture}

\usepackage[colorlinks=false, pdfborder={0 0 0}]{hyperref}

\begin{document}
\title{Paths of even length with equal-degree endpoints}
\date{}

\author{
Kaizhe Chen\thanks{School of the Gifted Young, University of Science and Technology of China, Hefei, Anhui 230026, China. 
Email: ckz22000259@mail.ustc.edu.cn}
~~~
Zhen Liu\thanks{ Center for Discrete Mathematics, Fuzhou University, Fuzhou, Fujian 350003, China. Email: 1552580575@qq.com}
~~~
Qinghou Zeng\thanks{ Center for Discrete Mathematics, Fuzhou University, Fuzhou, Fujian 350003, China. 
Email: zengqh@fzu.edu.cn}
}
\maketitle

\begin{abstract}
Addressing a question posed by Erd\H{o}s and Hajnal, Chen and Ma proved that, for all $n \ge 600$, the complete bipartite graph $K_{n,n+1}$ is the unique graph on $2n+1$ vertices with at least $n^2+n$ edges that contains no two vertices of equal degree joined by a path of length three. In this paper, we extend this result and prove that for every fixed integer \(\ell\ge 2\) and sufficiently large \(n\), the unique \(2n\)-vertex graph with at least \((n^2+n)/2\) edges that contains no two vertices of equal degree joined by a path of length \(2\ell\) is the half graph \(H_n\).
This resolves the problem posed by Chen and Ma, as well as a related question of Attwa, Az\'ocar Carvajal, Boyadzhiyska, Pierron, and Taraz concerning paths of even length with equal-degree endpoints.
\end{abstract}
\section{Introduction}\label{Intro}
A classical fact in graph theory asserts that every finite graph on at least two vertices contains two vertices of equal degree. It is then natural to ask what additional properties such vertices must satisfy under certain edge-density conditions. In 1991, Erd\H{o}s and Hajnal \cite{Erd1991} posed the following problem (see also \cite{Erd816}).

\begin{prob}[Erd\H{o}s and Hajnal \cite{Erd1991}]\label{EH}
Does every \((2n+1)\)-vertex graph with \(n^2+n+1\) edges contain two vertices of the same degree that are joined by a path of length three?
\end{prob}

The complete bipartite graph \(K_{n,n+1}\) shows that the edge bound in Problem~\ref{EH} is sharp. 
Chen and Ma \cite{CM2025} proved that for \(n \geq 600\), the graph \(K_{n,n+1}\) is the unique \((2n+1)\)-vertex graph with at least \(n^2+n\) edges that contains no two vertices of the same degree joined by a path of length three, and the $n< 600$ case was subsequently confirmed by Liu and Zeng \cite{LZ2025}, thereby fully resolving the above Erd\H{o}s--Hajnal problem.

A natural way to generalize this problem is to consider paths of arbitrary length. 
For any positive integers \(\ell\) and \(n\), let \(p_\ell(n)\) denote the maximum number of edges in an \(n\)-vertex graph that contains no two vertices of equal degree connected by a path of length \(\ell\). Chen and Ma \cite{CM2025} proposed the following conjecture and open problem.

\begin{conj}[Chen and Ma \cite{CM2025}]\label{Conj-Odd}
For any odd integer \(\ell \ge 3\) and sufficiently large \(n\), it holds that
$$p_\ell(2n+1)=n^2+n.$$
\end{conj}

\begin{prob}[Chen and Ma \cite{CM2025}]\label{Prob-CM}
Determine the exact value of \(p_{2\ell}(2n)\) for all \(\ell\) and sufficiently large \(n\).
\end{prob}

Regarding Conjecture \ref{Conj-Odd}, Liu and Zeng \cite{LZ2026} proved that for all $n \ge 11$, the case $\ell = 5$ holds, and $K_{n,n+1}$ is the unique extremal graph.
The conjecture was subsequently fully resolved by Zhao, Wang, and Lu \cite{ZWL2026}, 
who showed that for any odd $\ell \geq 7$ and sufficiently large $n$, 
$K_{n,n+1}$ is the unique extremal graph.
Shortly thereafter, Attwa, Azócar Carvajal, Boyadzhiyska, Pierron, and Taraz \cite{Attwa2026} gave a short proof that, for all $\ell \geq 6$ and  sufficiently large $n$, $p_\ell(n) \leq \left(1/4 + o(1) \right) n^2$. They also posed the following question regarding the asymptotic behavior of \(p_{2\ell}(n)\).

\begin{prob}[Attwa, Azócar Carvajal, Boyadzhiyska, Pierron, and Taraz \cite{Attwa2026}]\label{Prob-Attwa}
Is it true that $$p_{2\ell}(n) = \left( \frac{1}{8} + o(1) \right) n^2$$ for all $\ell$?
\end{prob}

The {\it half graph} \(H_n\), introduced by Erd\H{o}s and Hajnal \cite{Erdos1984,ErdosH1985}, is a bipartite graph on \(2n\) vertices with parts \(\{u_1,\dots,u_n\}\) and \(\{v_1,\dots,v_n\}\), where \(u_i\) is adjacent to \(v_j\) if and only if \(i \ge j\). Drawing on ideas from model theory, the half graph has become a central object in extremal combinatorics, structural graph theory, and theoretical computer science (see e.g. \cite{AlonFZ2019,ChudnovskyKOS2016,DreierEMMPT2024,MalliarisS2014,NesetrilMPRRS2021}).


Note that for any $\ell\ge 1$, the half graph $H_n$ contains no two vertices of equal degree joined by a path of length $2\ell$; thus $p_{2\ell}(2n) \ge (n^2+n)/2$. For the base case of $\ell = 1$, Chen and Ma \cite{CM2025} proved that $p_2(2n) = (n^2+n)/2$, and showed that the half graph is not the unique extremal graph. For $\ell\ge 2$, the exact value of $p_{2\ell}(2n)$ remains open.
In this paper, we prove the following result, which implies that for $\ell\ge 2$, $p_{2\ell}(2n)=(n^2+n)/2$ for sufficiently large $n$, and hence answers Problem \ref{Prob-CM}.

\begin{thm}\label{LZ}
For every integer $\ell\ge 2$ and sufficiently large $n$, the half graph $H_n$ is the unique $2n$-vertex graph with at least $\frac{n^2+n}{2}$ edges that contains no two vertices of the same degree joined by a path of length $2\ell$.
\end{thm}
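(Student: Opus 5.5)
Let $G$ be a $2n$-vertex graph with $e(G)\ge (n^2+n)/2$ and no two equal-degree vertices joined by a path of length $2\ell$. The plan is a stability argument followed by an exact cleanup. For each degree value $d$, let $V_d$ be the set of vertices of degree $d$. The key structural input is that a dense graph contains long paths with great flexibility. Whenever two vertices $x,y\in V_d$ lie in a common well-connected dense piece, one expects to route a path of length exactly $2\ell$ between them. This would be done via a robust-expansion or Dirac-type lemma inside a large subgraph of minimum degree $\Omega(n)$, together with a parity argument.

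\textbf{Step 1: stability.} First pass to a subgraph $G'$ of minimum degree at least $cn$ containing almost all edges, by iteratively deleting low-degree vertices. Two facts drive the structure.
\begin{itemize}
\item If $G'$ contains an odd cycle structure that is dense enough (for instance, many triangles, or being far from bipartite), then any two vertices of the same component of $G'$ are joined by paths of every sufficiently short length $\ge 4$, in particular $2\ell$. Every degree class must then essentially be a singleton, which forces at most about $n^2/2\cdot(1-\varepsilon)$ edges.
\item If $G'$ is close to bipartite with parts $A,B$, then two vertices on the same side are joined by an even path of length $2\ell$ whenever they have "expanding" neighbourhoods. So within each side, vertices of equal degree must be rare or isolated from the dense core.
\end{itemize}
Counting edges in a bipartite graph in which, on each side, the degrees are almost all distinct gives $e\le \sum_{i} \min(i,\cdot)\approx n^2/2$, and near-equality forces the degree sequence on each side to be close to $1,2,\dots,n$. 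This pins $G$ to be $o(n^2)$-close to $H_n$, using that a bipartite graph with degree sequence $1,\dots,n$ on one side of size $n$ is a half graph (a threshold, nested-neighbourhood structure). I would take the roughly $\tfrac18 n^2$ asymptotic from the Attwa et al.\ type argument as the model here.

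\textbf{Step 2: exact structure.} Knowing that $G$ is close to $H_n$, with an ordering $u_1,\dots,u_n$, $v_1,\dots,v_n$ and nested neighbourhoods, I would show it is exactly $H_n$.
\begin{itemize}
\item Show there are no edges inside a side. Such an edge between high-degree vertices creates odd cycles, and hence paths of length $2\ell$ between the many equal-degree pairs forced to exist.
\item Show the degrees on each side are exactly $1,\dots,n$. If two vertices on the same side share a degree, then, since each has a neighbour in the dense core where one can route paths of any even length up to $\Theta(n)$, they are connected by a $2\ell$-path. This is the reason $\ell\ge2$ is needed: for $\ell=1$ one needs a common neighbour, which the half graph-like alternatives avoid.
\item Combine with the edge count and the pigeonhole fact that the degrees must be nearly distinct to conclude that $G=H_n$.
\end{itemize}

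\textbf{Main obstacle.} The hardest part is the low-degree vertices, those of degree $o(n)$, which are not in the dense core. Such a vertex may have all its neighbours among low-degree vertices, so path-routing through the core fails. The edge count must be exact for these vertices to be forced into place.

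To handle them, I would first try a careful degree-counting argument. Suppose the low-degree vertices $u_1,\dots,u_k$ with $k=o(n)$ are not arranged as in $H_n$. Then the total degree sum drops by at least the number of "missing" distinct degree values. This would be combined with a short path of length $2\ell$ going from one low-degree vertex to its neighbour $w$, then into the core, then back through a neighbour of the other equal-degree vertex. Such a path exists as long as the two neighbours are core vertices on the appropriate sides and distinct, which handles all cases except when both vertices' only neighbours are the same tiny set. That residual configuration would be excluded directly by the edge count.
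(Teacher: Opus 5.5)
\textbf{There is a genuine gap, and it sits in the step you treat as routine.}

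\textbf{Step 1 (stability).} You infer that if degree classes are essentially singletons then $|E(G)|\lesssim n^2/2$. This is false. On $2n$ vertices degrees go up to $2n-1$, and the antiregular graph (degrees $1,2,\dots,2n-1$, with only the value $n$ repeated) has $n^2$ edges and a single equal-degree pair.

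Distinctness is useful only together with the fact that almost all degrees are at most $n$. The paper's bound $\sum_{v\in F}d(v)\le n^2/2+O(n^{3/2})$ for a set $F$ of pairwise distinct degrees (Lemma~\ref{eq:distinct-degree-sum}) rests on Lemmas~\ref{r2n} and~\ref{lem:R-minus-small}: fewer than $3\sqrt n$ vertices have degree above $n$. That uses the forbidden path essentially. Vertices of degree at least $n+\ell+1$ pairwise share $2\ell+2$ common neighbours, so neighbours of two distinct such vertices are joined by $2\ell$-paths through $R$. This forces their private neighbourhoods $U_{v_i}$ to be large and pairwise disjoint. Nothing in your plan plays this role.

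Nor can you borrow the asymptotics. Attwa et al.\ proved only $p_\ell(N)\le(\frac14+o(1))N^2$, i.e.\ about $n^2$ edges for $N=2n$. The bound $(\frac18+o(1))N^2$ is exactly their open question (Problem~\ref{Prob-Attwa}), which this theorem answers. So your Step 1 is the asymptotic form of the statement itself, not an input.

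Your routing claims are also unjustified as stated. A connected graph of minimum degree $\Omega(n)$ can have equal-degree vertices at distance greater than $2\ell$, so ``same component'' or ``same side'' does not give a $2\ell$-path. This happens whether the graph is far from bipartite (a balanced blow-up of $C_{2k+1}$ with $k>2\ell$) or bipartite (a chain of dense complete bipartite blocks).

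\textbf{Step 2 (exact structure).} The obstacle you identify is left open. ``The degree sum drops by the number of missing values'' and ``the residual configuration is excluded by the edge count'' are not arguments.

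The paper never passes through an $o(n^2)$-stability statement. Its organizing idea is to fix a pair $u,v$ realizing the largest repeated degree $\beta$ and push $\beta$ up to $n$:
\begin{itemize}
\item $\beta\ge n-300\ell\sqrt n$, via greedy removal of equal-degree pairs with many common neighbours, the Naor--Verstra\"ete bound for $C_{2\ell}$, and the multiset Lemma~\ref{lem3.2};
\item hence $B_{uv}=\emptyset$ by Lemma~\ref{B=kong};
\item then $\beta\ge n-c_1$, and finally $\beta=n$.
\end{itemize}
Since $|D_{uv}|=2n-2\beta+2I_{uv}-2$, this gives $V(G)=N[u]\cup N[v]$. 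So \emph{every} vertex, including those of bounded degree, is adjacent to one of the two hubs. Two vertices $a,b\in A_v$ of equal degree are then joined by a path $b\,w_u\,h\cdots v\,a$. Here $w_u\in A_u$ is the vertex of degree $n-1$ adjacent to all but one vertex of $A_v$, and the middle segment is a core path from Lemma~\ref{lem}(a).

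The remaining deviations (no vertex of degree $2n-\beta$, fewer than two vertices of degree $n-1$) are excluded by counting with slack $n$. Once each degree above a constant $c_3$ occurs at most twice (Claim~\ref{cl:high-degree}), they give $2|E(G)|\le n^2+O(1)<n^2+n$.

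\textbf{A smaller slip.} A bipartite graph with degrees $1,\dots,n$ on one side of size $n$ need not be a half graph. For $n=3$ take $N(a_1)=\{b_3\}$, $N(a_2)=\{b_1,b_2\}$, $N(a_3)=\{b_1,b_2,b_3\}$. You need the degree sequences of both sides.
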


In particular, Theorem~\ref{LZ} gives a new characterization of the half graph.
Note that adding an isolate vertex to a $(2n-1)$-vertex graph yields a $2n$-vertex graph with the same edge set.
Thus, Theorem~\ref{LZ} implies that \(p_{2\ell}(2n-1) \le p_{2\ell}(2n)=(n^2+n)/2\).
This gives an affirmative answer to Problem~\ref{Prob-Attwa}.


\medskip

\noindent\textbf{Notation}. Throughout the paper, we use the following notation. Let $G=(V(G),E(G))$ be a graph. For any \(v\in V(G)\), we denote by \(N_{G}(v)\) the set of \emph{neighbors} of \(v\) in \(G\), by \(N_{G}[v]\) the set \(N_{G}(v)\cup\{v\}\), and by \(d_G(v)\) the \emph{degree} of \(v\) in \(G\). 
For any \(S\subseteq V(G)\), let \(G[S]\) be the induced subgraph of \(G\) on \(S\). Let \(e_G(S)\) be the number of edges in \(G[S]\). 
For two disjoint sets \(X,Y\subseteq V(G)\), we write \(G[X,Y]\) for the bipartite subgraph of \(G\) with parts \(X\) and \(Y\) and edge set consisting of all edges of \(G\) between \(X\) and \(Y\). 
For arbitrary subsets \(X,Y\subseteq V(G)\), let \(e_G(X,Y)\) denote the number of edges with one endpoint in \(X\) and the other in \(Y\). 
In particular, $e_G(X,X)=e_G(X)$.
We will omit the subscript \(G\) in all these notations when there is no danger of confusion.
For any two distinct vertices \(s\) and \(t\) in $G$, we define
\[
\quad B_{st} := N(s) \cap N(t),\quad A^{(st)}_s := N(s)\setminus(\{t\} \cup B_{st}),\quad A^{(st)}_t := N(t)\setminus(\{s\} \cup B_{st}),
\]
\[{\rm and }\quad
D_{st} := V(G)\setminus\bigl(N[s]\cup N[t]\bigr).
\]
Fix an integer $\ell\ge 2$.
For any graph \(H\), let \(\operatorname{core}(H)\) denote the unique maximal induced subgraph of $H$ with minimum degree at least \( 2\ell+4\). 
\footnote{Since the union of any two induced subgraphs of \(H\) with minimum degree at least \(2\ell+4\) also has minimum degree at least \(2\ell+4\), \(\operatorname{core}(H)\) is well-defined.}
Define
\[
A_{s,1}^{(st)} := V\bigl(\operatorname{core}(G[A_s^{(st)}])\bigr),
\]
\[
A_{s,2}^{(st)} := V\bigl(\operatorname{core}(G[A_s^{(st)}\setminus A_{s,1}^{(st)}, A_t^{(st)}])\bigr) \cap (A_s^{(st)}\setminus A_{s,1}^{(st)}),
\]
\[
A_{s,3}^{(st)} := V\bigl(\operatorname{core}(G[A_s^{(st)}\setminus (A_{s,1}^{(st)}\cup A_{s,2}^{(st)}), D_{st}])\bigr) \cap \bigl(A_s^{(st)}\setminus (A_{s,1}^{(st)}\cup A_{s,2}^{(st)})\bigr),
\]
and 
\[
A_{s,4}^{(st)} := A_s^{(st)} \setminus (A_{s,1}^{(st)}\cup A_{s,2}^{(st)}\cup A_{s,3}^{(st)}).
\]
The subsets \(A_{t,1}^{(st)}, A_{t,2}^{(st)}, A_{t,3}^{(st)}, A_{t,4}^{(st)}\) are defined analogously. Define
$$Y_{st} := A_{s,4}^{(st)} \cup A_{t,4}^{(st)}\qquad {\rm and}\qquad A_{st} := (A_s^{(st)} \cup A_t^{(st)}) \setminus Y_{st}.$$

\section{Preliminary lemmas}
In this section, we present several lemmas that are useful in the proof of our main result.
\begin{thm}[Erd\H{o}s and Gallai \cite{EG1959}]\label{path-turan}
Let $G$ be a graph that contains no paths of length $k$. Then $|E(G)| \le \frac{k-1}{2} \cdot |V(G)|.$
\end{thm}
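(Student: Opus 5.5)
The statement immediately above is the Erd\H{o}s--Gallai theorem. The plan is the classical extremal-subgraph argument combined with a longest-path (P\'osa-type rotation) argument. I would argue by induction on $|V(G)|$ and reduce to the connected case. If the bound holds for every component, summing over components gives it for $G$, since both sides are additive over components.

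First, if some vertex $v$ has degree at most $\frac{k-1}{2}$, delete it. The graph $G-v$ still has no path of length $k$, so by induction
\[
|E(G)| \le \tfrac{k-1}{2}(|V(G)|-1)+\tfrac{k-1}{2}.
\]
So we may assume $G$ is connected with minimum degree $\delta(G)>\frac{k-1}{2}$, that is, $\delta(G)\ge k/2$. If $|V(G)|\le k$, the bound is trivial: $|E(G)|\le \binom{|V(G)|}{2}\le \frac{k-1}{2}|V(G)|$.

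It remains to treat a connected graph with $|V(G)|\ge k+1$ and $\delta(G)\ge k/2$, and to show that it contains a path of length $k$. This contradicts the hypothesis and finishes the proof.

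Take a longest path $P=x_0x_1\cdots x_m$ with $m<k$. By maximality, all neighbours of $x_0$ and of $x_m$ lie on $P$. Apply the standard Dirac/Ore crossing argument. If $x_0x_{i+1}$ and $x_ix_m$ were never both edges, then $N(x_0)$ and $N(x_m)^{+}$ (indices shifted by one) would be disjoint subsets of $\{x_1,\dots,x_m\}$. That forces $d(x_0)+d(x_m)\le m<k$, contradicting $\delta\ge k/2$.

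So some such pair of edges exists, and it closes $P$ into a cycle $C$ on the $m+1$ vertices of $P$. Since $G$ is connected and has more than $m+1$ vertices, some vertex outside $C$ is adjacent to a vertex of $C$. Opening $C$ at that vertex and appending the outside neighbour gives a path of length $m+1$, contradicting the maximality of $P$. Hence $m\ge k$, and $G$ contains a path of length $k$.

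The main obstacle is the crossing-edges step. One must check the index bookkeeping, so that the degree sum $d(x_0)+d(x_m)\ge k>m$ indeed forces a crossing pair. One must also handle the degenerate case where $x_0x_m$ is itself an edge, which is immediate. Everything else is routine induction.
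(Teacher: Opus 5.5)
Your proof is correct and complete. The paper itself gives no proof of this statement: it quotes Erd\H{o}s--Gallai as a black box and uses it only to bound edge counts in Lemma~\ref{lem:R-minus-small} and Lemma~\ref{lem}(b). So there is no internal argument to compare against.

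What you give is the standard self-contained proof, in two steps:
\begin{itemize}
\item Splitting into components and deleting a vertex of degree at most $\frac{k-1}{2}$ reduces, by induction, to a connected graph with $\delta(G)\ge k/2$. Integrality is what upgrades $\delta>\frac{k-1}{2}$ to $\delta\ge k/2$, and you handle this correctly.
\item Dirac's longest-path argument then shows that such a graph on at least $k+1$ vertices contains a path of length $\min\{|V(G)|-1,2\delta(G)\}\ge k$.
\end{itemize}

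Your crossing count is right. $N(x_0)$ and the forward shift of $N(x_m)$ are subsets of $\{x_1,\dots,x_m\}$, so disjointness would force $d(x_0)+d(x_m)\le m\le k-1$. The only degenerate case is $m=1$, where the ``cycle'' has two vertices. It is harmless: both vertices of $P$ are endpoints, so any outside neighbour extends $P$ directly.

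An alternative classical route first proves the circumference version: if no cycle is longer than $k$, there are at most $\frac{k}{2}(|V(G)|-1)$ edges. The path bound then follows by adding a universal vertex. Your argument avoids cycle-length theory entirely and is the shorter way to get exactly the path statement the paper needs.
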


\begin{thm}[Naor and Verstra\"ete \cite{NV2005}]\label{bipartite-C2k}
For positive integers $m,n,k$, let $\operatorname{ex}(m,n,C_{2k})$ denote the maximum number of edges in a bipartite graph with parts of sizes $m$ and $n$ that contains no cycle of length $2k$. Then for each integer $k \ge 2$,
\[
\operatorname{ex}(m,n,C_{2k}) \le
\begin{cases}
(2k-3)\left[(mn)^{\frac{k+1}{2k}} + m + n\right] & \text{if } k \text{ is odd}, \\
(2k-3)\left[m^{\frac{k+2}{2k}} n^{\frac{1}{2}} + m + n\right] & \text{if } k \text{ is even}.
\end{cases}
\]
\end{thm}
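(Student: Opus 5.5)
We prove Theorem~\ref{bipartite-C2k} with a breadth-first-search expansion argument in the style of Bondy and Simonovits, adapted to unbalanced parts. Let $G$ be a $C_{2k}$-free bipartite graph with parts $M$ and $N$, where $|M|=m$ and $|N|=n$. Write $e=|E(G)|$ and suppose, for contradiction, that $e$ exceeds the stated bound. The additive term $(2k-3)(m+n)$ pays for a cleaning step. We repeatedly delete every vertex of $M$ whose degree is below $\delta_M:=\lceil e/(2m)\rceil$ and every vertex of $N$ whose degree is below $\delta_N:=\lceil e/(2n)\rceil$. These deletions remove fewer than $e/2+e/2$ edges in total. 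We make the thresholds a little smaller than this so that a nonempty subgraph $G'$ survives. In $G'$ every vertex of $M$ has degree $\gtrsim e/m$ and every vertex of $N$ has degree $\gtrsim e/n$. We also assume $e/m$ and $e/n$ are at least a large multiple of $k$; otherwise the term $(2k-3)(m+n)$ already gives the bound.

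Next we fix a root $x\in M$ of $G'$ and let $L_0=\{x\},L_1,L_2,\dots$ be the BFS layers of $G'$ from $x$. These layers alternate between $M$ and $N$. The key step is an expansion lemma: for $1\le i<k$ we want
\[
|L_{i+1}|\ \ge\ \frac{c\,\delta_{i}}{k}\,|L_i|,
\]
where $\delta_i$ is the minimum degree on the side containing $L_i$ and $c>0$ is an absolute constant. To prove it, suppose instead that $G'[L_i,L_{i+1}]$ has many edges relative to $|L_i|+|L_{i+1}|$. We pass to a subgraph of large minimum degree, at least about $2k$. In it we find a long path, or a theta-type configuration with two endpoints in $L_i$. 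We join its endpoints back through the BFS tree to their last common ancestor. This produces cycles of every even length in a range that includes $2k$; this is the Faudree--Simonovits / Bondy--Simonovits theta-graph lemma. The same reasoning, applied to edges from $L_i$ to $L_{i-1}$ and inside the layers, shows that most edges at $L_i$ must go forward to $L_{i+1}$. Together with the minimum degree of $G'$, this yields the displayed growth.

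Iterating the lemma, the factors alternate between $e/m$ and $e/n$ up to constants depending on $k$. If $k$ is odd, $L_k\subseteq N$ and
\[
|L_k|\gtrsim (e/m)^{(k+1)/2}(e/n)^{(k-1)/2}.
\]
Comparing with $|L_k|\le n$ gives $e^k\lesssim m^{(k+1)/2}n^{(k+1)/2}$, that is, $e\lesssim (mn)^{\frac{k+1}{2k}}$. If $k$ is even, $L_k\subseteq M$ and $|L_k|\gtrsim (e/m)^{k/2}(e/n)^{k/2}$. Comparing with $|L_k|\le m$ gives $e\lesssim m^{\frac{k+2}{2k}}n^{1/2}$. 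Both contradict the assumption on $e$.

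The main obstacle is the expansion lemma with explicit constants. We need a cycle of length exactly $2k$, not just some even cycle of length at most $2k$. The constant $2k-3$ must also come out of the theta-graph argument. We would first use the known lemma that a bipartite graph with minimum degree at least $k$, together with a BFS ancestor structure, contains paths of all relevant lengths between suitable endpoints. The remaining work is careful bookkeeping so that the losses in the cleaning step and at each level are absorbed into the factor $(2k-3)$.
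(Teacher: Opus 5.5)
\textbf{Verdict: genuine gap.} The paper does not prove this theorem. It quotes it from \cite{NV2005} and uses it once, in the proof of Lemma~\ref{lem:beta-large}, on a bipartite graph with parts of sizes $q=\lceil\sqrt n\rceil$ and $2n-2q$. So your proposal has to stand on its own. Your skeleton is the standard route: clean to half the average degree on each side, run BFS from a root in $M$, expand layer by layer via the Bondy--Simonovits/Verstra\"ete $A$--$B$ path lemma, and alternate the growth factors. Your exponent count in both parity cases is correct. Completed with the standard lemma, however, it yields only $\operatorname{ex}(m,n,C_{2k})\le Ck\bigl[(mn)^{\frac{k+1}{2k}}+m+n\bigr]$ (and its even-$k$ analogue) for some absolute constant $C$. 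It does not give the stated bound with factor $2k-3$. What you defer as ``bookkeeping'' is exactly where the proof is missing.

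\textbf{The reduction step is invalid.} You write: assume $e/m$ and $e/n$ are at least a large multiple of $k$, ``otherwise the term $(2k-3)(m+n)$ already gives the bound.'' But $e<Ckn$ with $C$ large does not imply $e\le(2k-3)\bigl[(mn)^{\frac{k+1}{2k}}+m+n\bigr]$. For fixed $m$ and $n\to\infty$ the right-hand side is $(2k-3+o(1))n$, which is smaller than $Ckn$.
\begin{itemize}
\item Graphs whose larger side has average degree between roughly $2k-3$ and $Ck$ are therefore not handled at all.
\item In that range your half-average cleaning can leave minimum degree below $k$ on the large side, where BFS expansion gives nothing.
\item This is precisely the regime of the paper's application. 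There $e(H)>(10\ell-3)n$ on parts of sizes about $\sqrt n$ and $2n$, so the large side has average degree only about $5\ell$. The conclusion $\operatorname{ex}(q,2n-2q,C_{2\ell})<5\ell n$ rests on the explicit coefficient $2\ell-3$.
\end{itemize}

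\textbf{The constant is lost even when both average degrees are large.} Thresholds $e/(2m)$, $e/(2n)$ and growth $c\,\delta_i/k$ per level give, after taking a $k$-th root, only $e\lesssim (2k/c)(mn)^{\frac{k+1}{2k}}$. To reach $2k-3$ you would need $c\ge 2k/(2k-3)>1$ at every level, after paying for backward edges. Nothing in the sketch supplies that; it calls for a sharper lemma, not more careful bookkeeping.

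\textbf{The expansion step also needs the precise form of the lemma.} To force a cycle of length exactly $2k$, you must fix one vertex $y$, the last common ancestor of all $L_i$-vertices of the dense subgraph. Then split those vertices according to the branches of the BFS tree below $y$, and find paths of every even length in the required range between the two classes. Joining the ends of a single path or theta graph to their own common ancestor does not control the resulting cycle length.

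An $O(k)$-constant version would serve the paper only after enlarging the constants $10\ell\sqrt n$ and $300\ell$ in the proof of Lemma~\ref{lem:beta-large}.
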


\begin{lem}\label{lem3.2}
Let $\ell\ge 2$ be an integer.
Let \(n\) be a sufficiently large integer, and let $\alpha$ be an integer with \(\frac{2}{5}n < \alpha < n-300\ell\sqrt n\). Let \(S\) be a multiset of \(2n\) integers each at most \(n\), satisfying the following conditions:
\begin{enumerate}[label=(\alph*)]
\item For each integer \(d\) with \( d > \alpha\), there is at most one element equal to \(d\) in \(S\);
\item For each \(j \in \{1,2,3,4\}\) and each integer \(d\) satisfying
$d\ge \frac{2(n-\alpha)}{j}$,
there are at most \(j+1\) elements equal to \(d\) in \(S\).
\end{enumerate}
Then the sum of all elements in $S$ is at most
$ n^2 - 299\ell n^{3/2}$.
\end{lem}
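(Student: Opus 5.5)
The plan is to bound the sum by the maximum possible sum of such a multiset, and then compare that maximum with $n^2$. Put $\beta:=n-\alpha$, so that $300\ell\sqrt n<\beta<\tfrac35 n$. Every condition in the statement is an upper bound on the multiplicity of a value, and the larger values face the stricter bounds. So we may assume $S$ is built greedily: choose the largest admissible values, each with its largest permitted multiplicity, until $2n$ elements have been chosen. Replacing any element by a larger value that still has spare multiplicity only increases the sum, so the greedy multiset $S^\ast$ is the maximizer. It therefore suffices to show $\sum S^\ast\le n^2-299\ell n^{3/2}$.

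\textbf{Case $\beta\le n/3$}, that is, $\alpha\ge 2\beta$. The greedy multiset takes the following values.
\begin{itemize}
\item Each value in $(\alpha,n]$ once: $\beta$ elements with sum $n\beta-\beta^2/2+O(n)$.
\item Each value in $[2\beta,\alpha]$ twice (condition (b) with $j=1$): $2(n-3\beta)$ elements with sum $(n-\beta)^2-4\beta^2+O(n)$.
\item Each value in $[\beta,2\beta)$ three times ($j=2$): $3\beta$ elements with sum $\tfrac92\beta^2+O(n)$.
\end{itemize}
At this point $2n-2\beta$ slots are used. The remaining $2\beta$ slots are filled as follows.
\begin{itemize}
\item Values in $[2\beta/3,\beta)$ four times each ($j=3$): $4\beta/3$ elements with sum $\tfrac{10}{9}\beta^2+O(n)$.
\item The last $2\beta/3$ elements are all less than $2\beta/3$, so their sum is at most $\tfrac49\beta^2$.
\end{itemize}
Adding these up gives
\[
\sum S^\ast\le n^2-n\beta+c\beta^2+O(n),\qquad c<2.6 .
\]
The map $\beta\mapsto n\beta-c\beta^2$ is concave, so on $[300\ell\sqrt n,\,n/3]$ it is minimized at an endpoint. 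At $\beta=300\ell\sqrt n$ the deficit is
\[
300\ell n^{3/2}-O(\ell^2 n)\ge 299\ell n^{3/2}+O(n)
\]
for $n$ large, where the $O(n)$ absorbs the rounding error. At $\beta=n/3$ the deficit is $\Omega(n^2)$. This settles the case.

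\textbf{Case $n/3<\beta<3n/5$.} Now $\alpha<2\beta$, so the multiplicity-$2$ band is empty. We redo the same greedy count: values in $(\alpha,n]$ once each, then values in $[\beta,\alpha]$ three times each (if $\alpha\ge\beta$), then multiplicity $4$ down to $2\beta/3$, then multiplicity $5$ down to $\beta/2$. Since $\alpha>2n/5$, we have $\beta/2<3n/10<\alpha$, so every threshold lies below $\alpha$ and is relevant. Again we get an explicit quadratic in $\beta$ and $n$, plus $O(n)$. We check at the two endpoints $\beta=n/3$ and $\beta=3n/5$, or at the vertex of the quadratic if it lies inside the interval, that the deficit $n^2-\sum S^\ast$ is at least $\varepsilon n^2$ for some absolute $\varepsilon>0$. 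This deficit is far larger than $299\ell n^{3/2}$.

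\textbf{Expected main obstacle.} This is the bookkeeping in the second regime. It has several sub-cases according to where $\alpha$ sits relative to $2\beta$, $\beta$ and $2\beta/3$. In each we must confirm that the constant in front of $n^2$ in the deficit stays bounded away from zero uniformly in $\beta/n\in(1/3,3/5)$. This check is elementary but needs careful arithmetic. The lower bound $\alpha>2n/5$ is exactly what keeps the low-multiplicity bands long enough for the deficit to stay positive. If the constants come out tight, I would treat the problem as a continuous optimization: the normalized sum $f(\beta/n)$ is a piecewise-quadratic function, and I would verify $\max f<1$ on the closed interval.
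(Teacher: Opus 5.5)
Your strategy is the paper's: dominate $S$ by a greedily filled multiset, sum it band by band, and maximize the resulting quadratic in $\alpha$. Your regime $\beta\le n/3$ (with your $\beta=n-\alpha$) is exactly the paper's Case~1 and is done correctly. Bounding the last $2\beta/3$ elements by $\frac49\beta^2$, instead of the paper's exact $\frac25\beta^2$ (multiplicity $5$ down to $\lfloor\frac{8}{15}\beta\rfloor$), only changes $c$ from $\frac{113}{45}$ to $\frac{23}{9}$, which is harmless. The gap is the regime $2n/5<\alpha<2n/3$, which contains two of the paper's three cases. There you assert an $\varepsilon n^2$ deficit but compute nothing, so half of the lemma is left unproved.

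Moreover, the band list you give for this regime does not yield an upper bound if summed as written. From $n$ down to $\beta/2$ these bands hold only about $(17\alpha+n)/6$ elements when $n/2\le\alpha<2n/3$, and about $(29\alpha-5n)/6$ when $2n/5<\alpha<n/2$. Both counts are below $2n$ for every $\alpha<11n/17$, so the greedy is not finished at $\beta/2$. The leftover elements lie where (b) imposes no restriction and must be added, e.g.\ all at value $\lceil\beta/2\rceil-1$.

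The paper handles exactly this. It splits at $\alpha=n/2$, where the multiplicity-$3$ band $[\beta,\alpha]$ disappears. It stops the multiplicity-$5$ band at $\lfloor\frac{8}{15}(n-\alpha)\rfloor$ and adds $2n-3\alpha$, respectively $3n-5\alpha$, copies of $\lfloor\frac{8}{15}(n-\alpha)\rfloor-1$. The resulting quadratics are convex in $\alpha$ and bounded on their intervals by $\frac{383}{405}n^2$ (attained as $\alpha\to 2n/3$) and $\frac{319}{360}n^2$ (at $\alpha=n/2$). This confirms your predicted deficit with room to spare.

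Also, $\alpha>2n/5$ is not what keeps the deficit positive. For $\alpha\le 2n/5$ the crude bound $\sum_{d=\alpha+1}^{n}d+(n+\alpha)\alpha\le 0.98n^2+O(n)$ already suffices, and the paper uses essentially this bound in the main proof. The hypothesis only guarantees $\frac23(n-\alpha)\le\alpha$, so that a single band structure covers $2n/5<\alpha<n/2$.
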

\begin{proof}

We divide the proof into three cases according to the size of $\alpha$.

\medskip
\noindent \textbf{Case 1.} \( \frac23 n\le \alpha<n-300\ell\sqrt{n}\).

In this case, we have \(\alpha\ge 2(n-\alpha)\). Now construct an auxiliary multiset \(S^*\) as follows.  For each integer \(d\), the multiplicity of \(d\) in \(S^*\) is
\[
\begin{cases}
1, & \text{if } \alpha<d \le n,\\[4pt]
2, & \text{if } 2(n-\alpha) \le d \le \alpha,\\[4pt]
3, & \text{if } n-\alpha \le d \le 2(n-\alpha)-1,\\[4pt]
4, & \text{if } \bigl\lceil\frac{2}{3}(n-\alpha)\bigr\rceil \le d \le n-\alpha-1,\\[4pt]
5, & \text{if } \left\lfloor\frac{8}{15}(n-\alpha)\right\rfloor \le d \le \bigl\lceil\frac{2}{3}(n-\alpha)\bigr\rceil-1,\\[4pt]
0, & \text{otherwise.}
\end{cases}
\]
We first verify that \(|S^*|\) is at least \(|S|=2n\).  
By the definition of \(S^*\), we find
\[
\begin{aligned}
|S^*| &= 4n  - 2\alpha + 2 + \left\lceil \frac{2}{3}(n-\alpha) \right\rceil - 5 \left\lfloor \frac{8}{15}(n-\alpha) \right\rfloor \\
&\ge 4n  - 2\alpha + 2 +  \frac{2}{3}(n-\alpha)  - 5 \times \frac{8}{15}(n-\alpha)= 2n+2 > 2n.
\end{aligned}
\]

By the conditions satisfied by \(S\) and the definition of \(S^*\), for every integer \(d \ge \left\lfloor\frac{8}{15}(n-\alpha)\right\rfloor\), the multiplicity of \(d\) in \(S^*\) is at least the multiplicity of \(d\) in \(S\).  Moreover, \(\left\lfloor\frac{8}{15}(n-\alpha)\right\rfloor\) is the smallest element that appears in \(S^*\).  Consequently, the sum of the elements in \(S\) is at most the sum of the elements in \(S^*\). Elementary calculation shows that
\[
\begin{aligned}
\sum_{d\in S^*} d &=  \sum_{d=\alpha+1}^{n} d \;+\; 2 \sum_{d=2(n-\alpha)}^{\alpha} d \;+\; 3 \sum_{d=n-\alpha}^{2(n-\alpha)-1} d 
 \;+\; 4 \sum_{d=\lceil 2(n-\alpha)/3\rceil}^{n-\alpha-1} d \;+\; 5 \sum_{d=\lfloor 8(n-\alpha)/15\rfloor}^{\lceil 2(n-\alpha)/3\rceil-1} d \\
 &= \frac{113}{45}n^2 - \frac{181}{45}n\alpha + \frac{113}{45}\alpha^2 + O(n).
\end{aligned}
\]
Considering the above expression as a quadratic polynomial in \(\alpha\) with axis of symmetry \(\alpha = \frac{181}{226}n \), the condition \(\frac{2}{3}n \le \alpha < n-300\ell\sqrt{n}\) implies that it attains its maximum at \(\alpha = n-300\ell\sqrt{n}\). Hence,
\[
\begin{aligned}
\sum_{d\in S} d\le \sum_{d\in S^*} d &\le  n^2 - 300\ell n^{3/2} + O(n),
\end{aligned}
\]
which is at most $n^2  - 299\ell n^{3/2}$ for sufficiently large $n$. 

\medskip
\noindent \textbf{Case 2.} \( \frac12 n\le \alpha<\frac23 n\).

In this case, we have $n-\alpha\le \alpha <2(n-\alpha)$.
Similarly to Case 1, we construct an auxiliary multiset $S^*$ as follows.
For each integer $d$, the multiplicity of $d$ in $S^*$ is
\[
\begin{cases}
1, & \text{if } \alpha<d \le n,\\[4pt]
3, & \text{if } n-\alpha \le d \le \alpha,\\[4pt]
4, & \text{if } \bigl\lceil\frac{2}{3}(n-\alpha)\bigr\rceil \le d \le n-\alpha-1,\\[4pt]
5, & \text{if } \left\lfloor\frac{8}{15}(n-\alpha)\right\rfloor \le d \le \bigl\lceil\frac{2}{3}(n-\alpha)\bigr\rceil-1,\\[4pt]
2n-3\alpha, & \text{if} \ d=\left\lfloor\frac{8}{15}(n-\alpha)\right\rfloor-1,\\[4pt]
0, & \text{otherwise.}
\end{cases}
\]  
By the definition of \(S^*\), we have
\[
\begin{aligned}
|S^*| &=4n - 2\alpha + 3 + \left\lceil \frac{2}{3}(n-\alpha) \right\rceil - 5\left\lfloor \frac{8}{15}(n-\alpha) \right\rfloor \\
&\ge 4n - 2 \alpha + 3 + \frac{2}{3}(n-\alpha)  - 5 \times\frac{8}{15}(n-\alpha)=2n+3> 2n.
\end{aligned}
\]

Similarly, since the multiplicity of \(d\) in \(S^*\) is at least the multiplicity of \(d\) in \(S\) for any \(d \ge \left\lfloor\frac{8}{15}(n-\alpha)\right\rfloor\), and \(\left\lfloor\frac{8}{15}(n-\alpha)\right\rfloor-1\) is the smallest element  in \(S^*\), we deduce that the sum of the elements in \(S\) is at most the sum of the elements in \(S^*\). Note that 
\[
\begin{aligned}
\sum_{d\in S^*} d &=  \sum_{d=\alpha+1}^{n} d  \;+\; 3 \sum_{d=n-\alpha}^{\alpha} d  \;+\; 4 \sum_{\lceil2(n-\alpha)/3\rceil}^{n-\alpha-1} d   
\;+\;  5 \sum_{d=\lfloor 8(n-\alpha)/15\rfloor}^{\lceil 2(n-\alpha)/3\rceil-1} d  + (2n-3\alpha)\left(\left\lfloor\frac{8}{15}(n-\alpha)\right\rfloor-1\right) \\ 
&= \frac{71}{45} n^2 - \frac{121}{45} n\alpha + \frac{47}{18} \alpha^2 + O(n).
\end{aligned}
\]
Since \(\frac{1}{2}n \le \alpha < \frac{2}{3}n\), 
the above quadratic polynomial in \(\alpha\) attains its maximum at \(\alpha =\frac {2}{3}n\). Hence,
\[
\begin{aligned}
\sum_{d\in S} d\le \sum_{d\in S^*} d &\le \frac{383}{405}n^2 + O(n),
\end{aligned}
\]
which is at most $n^2  - 299\ell n^{3/2}$ for sufficiently large $n$.

\medskip
\noindent \textbf{Case 3.} \( \frac{2}{5} n< \alpha< \frac{1}{2}n\).

In this case, we have $\bigl\lceil\frac{2}{3}(n-\alpha)\bigr\rceil\le\alpha <n-\alpha.$
Similarly, we define $S^*$ as follows.
For each integer $d$, the multiplicity of $d$ in $S^*$ is
\[
\begin{cases}
1, & \text{if } \alpha<d \le n,\\[4pt]

4, & \text{if } \bigl\lceil\frac{2}{3}(n-\alpha)\bigr\rceil \le d \le \alpha,\\[4pt]
5, & \text{if } \left\lfloor\frac{8}{15}(n-\alpha)\right\rfloor \le d \le \bigl\lceil\frac{2}{3}(n-\alpha)\bigr\rceil-1,\\[4pt]
3n-5\alpha, & \text{if} \ d=\left\lfloor\frac{8}{15}(n-\alpha)\right\rfloor-1,\\[4pt]
0, & \text{otherwise.}
\end{cases}
\]
Then,
\[
\begin{aligned}
|S^*| &=4n - 2\alpha + 4 + \left\lceil\frac{2(n-\alpha)}{3}\right\rceil - 5\left\lfloor\frac{8(n-\alpha)}{15}\right\rfloor \\
&\ge 4n - 2 \alpha + 4 + \frac{2}{3}(n-\alpha)  - 5 \times\frac{8}{15}(n-\alpha)=2n+4 >2n.
\end{aligned}
\]
Similarly, the sum of the elements in \(S\) is at most the sum of the elements in \(S^*\). We calculate that
\[
\begin{aligned}
\sum_{d\in S^*} d &=  \sum_{d=\alpha+1}^{n} d   \;+\; 4 \sum_{\lceil2(n-\alpha)/3\rceil}^{\alpha} d    
\;+\;  5 \sum_{d=\lfloor 8(n-\alpha)/15\rfloor}^{\lceil 2(n-\alpha)/3\rceil-1} d   + (3n-5\alpha) \left(\left\lfloor\frac{8}{15}(n-\alpha)\right\rfloor-1\right) \\ 
&= \frac{29}{18}n^2-\frac{148}{45}n\alpha+\frac{331}{90}\alpha^2 + O(n).
\end{aligned}
\]
By the Case assumption, the above expression attains its maximum at \(\alpha =\frac {1}{2}n\). So,
\[
\begin{aligned}
\sum_{d\in S} d\le \sum_{d\in S^*} d &\le \frac{319}{360}n^2 + O(n),
\end{aligned}
\]
which is at most $n^2  - 299\ell n^{3/2}$ for sufficiently large $n$.
This completes the proof.
\end{proof}

\section{Proof of main result}
Let $\ell\ge 2$ be a fixed integer, let $n$ be a sufficiently large integer, and let $G$ be a graph on $2n$ vertices with at least $(n^2+n)/2$ edges. Suppose that no two vertices of equal degree in $G$ are joined by a path of length $2\ell$. Our final aim is to prove that $G$ is isomorphic to the half graph $H_n$.

\subsection{Large degree vertices}\label{sub1}
Let $R$ be the set of all vertices of degree at least $n+\ell+1$ in $G$, and set $r:=|R|$.
Note that the number of common neighbors of any two distinct vertices in \(R\) is at least
\begin{equation}\label{eq:common-neighborhood-q}
2(n+\ell+1)-2n
=2\ell+2.
\end{equation}
\begin{lem}\label{2kL+}
    If $r \ge \ell + 1$, then all vertices in $R$ have pairwise distinct degrees.
\end{lem}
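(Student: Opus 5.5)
We argue by contradiction: suppose $r\ge \ell+1$ and there are two distinct vertices $s,t\in R$ with $d(s)=d(t)$. We will build a path of length $2\ell$ from $s$ to $t$ inside $G$, contradicting the hypothesis on $G$. The key input is \eqref{eq:common-neighborhood-q}: any two vertices of $R$ have at least $2\ell+2$ common neighbours. So the auxiliary relation ``$x,y\in R$ are joined by a path of length two'' holds for every pair in $R$, with many choices of middle vertex.

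Since $r\ge \ell+1$, we choose distinct vertices $s=x_0,x_1,\dots,x_{\ell-1},x_\ell=t$ in $R$. This needs $\ell+1$ vertices of $R$, and we have exactly enough. We then greedily choose middle vertices $y_1,\dots,y_\ell$, with $y_i\in N(x_{i-1})\cap N(x_i)$. Each $y_i$ must be distinct from all the $x_j$'s (at most $\ell+1$ vertices) and from the previously chosen $y_1,\dots,y_{i-1}$ (at most $\ell-1$ vertices). Hence at most $2\ell$ vertices are forbidden, while there are at least $2\ell+2$ candidates, so a valid $y_i$ always exists. Strictly, $y_i$ only needs to avoid the $x_j$ with $j\ne i-1,i$, since $y_i$ is automatically not $x_{i-1}$ or $x_i$. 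Either way the count works with room to spare.

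The resulting walk $s=x_0y_1x_1y_2\cdots y_\ell x_\ell=t$ has all $2\ell+1$ vertices distinct. It is therefore a path of length $2\ell$ joining $s$ and $t$, which have equal degree. This is a contradiction, so all degrees in $R$ are pairwise distinct.

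The only point needing care is the counting in the greedy step, namely that the common neighbourhood bound $2\ell+2$ exceeds the number of forbidden vertices. The bound $d\ge n+\ell+1$ in the definition of $R$ was evidently chosen for exactly this, so I expect no real obstacle.
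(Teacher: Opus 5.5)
Your proof is correct and is essentially the paper's argument: both choose $\ell-1$ further vertices of $R$ as intermediate hubs and greedily pick distinct common neighbours, using the bound $2\ell+2$ from \eqref{eq:common-neighborhood-q}. Your count of forbidden vertices (at most $2\ell$, in fact at most $2\ell-2$) just spells out the greedy step that the paper states in one line.
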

\begin{proof}
    Suppose, to the contrary, that there exist two distinct vertices $x,y\in R$ such that $d(x)=d(y)$. Since $r\ge \ell+1$, we may choose $\ell-1$ distinct vertices
$z_1,z_2,\ldots,z_{\ell-1}\in R\setminus\{x,y\}$.
Put $z_0=x$ and $z_\ell=y$. 
By \eqref{eq:common-neighborhood-q}, we can greedily select vertices $a_t\in N(z_{t-1})\cap N(z_t)$, for each $1\le t\le \ell$, such that the $2\ell +1$ vertices $z_0,a_1,z_1,a_2,\ldots,z_{\ell-1},a_\ell,z_\ell$ are pairwise distinct. 
Consequently, $z_0a_1z_1a_2\cdots z_{\ell-1}a_\ell z_\ell$ forms a path of length $2\ell$ whose endpoints $z_0=x$ and $z_\ell=y$ share identical degrees, yielding a contradiction.
\end{proof}

By Lemma \ref{2kL+}, if $r\ge \ell+1$, we may label $R=\{v_1,v_2,\ldots,v_r\}$ such that
\[
d(v_1)>d(v_2)>\cdots>d(v_r)\ge n+\ell+1.
\]
In particular, for every \(j\in\{1,2,\ldots,r\}\),
\begin{equation}\label{eq:degree-lower-q}
d(v_j)\ge n+\ell+1+r-j.
\end{equation}

\begin{lem}\label{differentdegree}
Assume that \(r\ge \ell+1\). For any two distinct indices \(i,j\in\{1,2,\ldots,r\}\), and any two distinct vertices $s\in N(v_i),\ t\in N(v_j)$, we have $d(s)\ne d(t)$.
\end{lem}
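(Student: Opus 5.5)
We argue by contradiction. Suppose $d(s)=d(t)$ for some $s\in N(v_i)$ and $t\in N(v_j)$ with $i\ne j$ and $s\ne t$. We build a path of length $2\ell$ from $s$ to $t$ by attaching $s$ and $t$ to the ends of a path of length $2\ell-2$ between $v_i$ and $v_j$ that runs through $R$. Such a path exists by the same greedy construction as in Lemma~\ref{2kL+}, using the common-neighbourhood bound \eqref{eq:common-neighborhood-q}.

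The path is $s\,v_i\,a_1\,z_1\,a_2\cdots z_{\ell-2}\,a_{\ell-1}\,v_j\,t$. Here $z_0=v_i$, $z_{\ell-1}=v_j$, and $z_1,\dots,z_{\ell-2}$ are distinct vertices of $R\setminus\{v_i,v_j\}$. Each $a_k$ lies in $N(z_{k-1})\cap N(z_k)$. This path has $2+2(\ell-1)=2\ell$ edges.

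\textbf{Choosing the $z_k$.} We need $\ell-2$ vertices of $R$ outside $\{v_i,v_j,s,t\}$. We have $r\ge \ell+1$. Removing $v_i$ and $v_j$ leaves $r-2\ge \ell-1$ vertices, and $s,t$ remove at most two more, leaving at least $\ell-3$. That is one short of $\ell-2$ only in the worst case $r=\ell+1$ with both $s,t\in R\setminus\{v_i,v_j\}$.

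This is the main obstacle, and there are two ways to handle it.
\begin{itemize}
\item If $s,t\in R$, then $d(s)=d(t)$ contradicts Lemma~\ref{2kL+} directly, since $s\ne t$.
\item If, say, $t\in R$ but $s\notin R$, then at most three vertices are removed, leaving $r-3\ge \ell-2$, which is enough.
\end{itemize}
So we may always pick distinct $z_1,\dots,z_{\ell-2}\in R\setminus\{v_i,v_j,s,t\}$. When $\ell=2$ no $z_k$ is needed at all.

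\textbf{Choosing the $a_k$.} We choose $a_1,\dots,a_{\ell-1}$ greedily. By \eqref{eq:common-neighborhood-q}, each pair $z_{k-1},z_k$ of distinct vertices in $R$ has at least $2\ell+2$ common neighbours. At the moment $a_k$ is chosen, the vertices already used number at most
\[
\ell \;(\text{the } z\text{'s, including } v_i,v_j) \;+\; 2 \;(s,t) \;+\; (\ell-2) \;(\text{earlier } a\text{'s}) \;=\; 2\ell .
\]
Since $2\ell<2\ell+2$, a fresh common neighbour always exists.

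\textbf{Conclusion.} The chosen vertices are pairwise distinct. The edges $sv_i$ and $v_jt$ exist by hypothesis, so the sequence is a genuine path of length $2\ell$. Its endpoints $s$ and $t$ have equal degree, which contradicts the assumption on $G$. Hence $d(s)\ne d(t)$.
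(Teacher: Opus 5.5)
Your construction is the paper's: the path $s\,v_i\,a_1\,z_1\cdots z_{\ell-2}\,a_{\ell-1}\,v_j\,t$, with the $a_k$ chosen greedily using \eqref{eq:common-neighborhood-q}, and Lemma~\ref{2kL+} when $s,t\in R$. The mixed case, however, has a hole.

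The hypotheses give $s\ne v_i$ and $t\ne v_j$. They do not exclude $t=v_i$ or $s=v_j$, which is allowed whenever $v_iv_j\in E(G)$. Take $t=v_i$ and $s\in N(v_i)\setminus R$. This lands in your second bullet, and your sequence $s\,v_i\,a_1\cdots a_{\ell-1}\,v_j\,t$ visits $v_i$ twice. So it is not a path, and your claim that ``the chosen vertices are pairwise distinct'' fails. Your removal of $\{v_i,v_j,s,t\}$ from $R$ does not address this, because the repetition is in the fixed vertices $v_i,v_j$, not in the $z_k$.

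The missing observation is that $R$ is defined by a degree threshold. If exactly one of $s,t$ lies in $R$, then one degree is at least $n+\ell+1$ and the other is at most $n+\ell$, so $d(s)\ne d(t)$ with no path needed. This is how the paper disposes of the mixed case.

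You may then assume $s,t\notin R$. This automatically gives $s,t\notin\{v_i,v_j,z_1,\dots,z_{\ell-2}\}$, so the $\ell-2$ vertices $z_k$ can be any vertices of $R\setminus\{v_i,v_j\}$. Your ``main obstacle'' counting becomes unnecessary, and the rest of your argument goes through as written.
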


\begin{proof}
If both $s$ and $t$ belong to $R$, then Lemma \ref{2kL+} gives $d(s)\ne d(t)$.
If exactly one of \(s,t\) belongs to \(R\), then $d(s)\ne d(t)$,
by the definition of $R$. Hence, we may assume that $s,t\notin R.$

Since \(r\ge \ell+1\), we can choose \(\ell-2\) distinct vertices
$z_1,z_2,\ldots,z_{\ell-2}\in R\setminus\{v_i,v_j\}.$
For \(\ell=2\), this list is empty. Put $z_0=v_i,\ z_{\ell-1}=v_j.$
By \eqref{eq:common-neighborhood-q}, we can choose $a_q\in N(z_{q-1})\cap N(z_q)$, for each \(q\in\{1,2,\ldots,\ell-1\}\), so that the $2\ell +1$ vertices $t,z_0,a_1,z_1,a_2,\ldots,z_{\ell-2},a_{\ell-1},z_{\ell-1},s$ are distinct. 
Therefore $sz_0a_1z_1a_2\cdots a_{\ell-1}z_{\ell-1}t$ is a path of length $2\ell$ with endpoints \(s\) and \(t\). So, \(d(s)\ne d(t)\).
\end{proof}

\begin{lem}\label{eq:v1bound}
    There are at most $n+\ell+r$ vertices in $G$ with pairwise distinct positive degrees.
\end{lem}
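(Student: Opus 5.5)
Let $T$ be a set of vertices with pairwise distinct positive degrees; we want $|T|\le n+\ell+r$. The vertices of $T$ outside $R$ have degrees in $\{1,2,\dots,n+\ell\}$, and these degrees are pairwise distinct. Hence $|T\setminus R|\le n+\ell$. Trivially $|T\cap R|\le r$, so $|T|\le n+\ell+r$.

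This argument needs no case analysis on whether $r\ge \ell+1$: the degree bound defining $R$ (every vertex outside $R$ has degree at most $n+\ell$) already forces the count. The one point to check is that positivity is what excludes degree $0$, so that each vertex outside $R$ contributes a distinct value from a set of size $n+\ell$.

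There is no real obstacle. The statement is a direct pigeonhole count: $T\setminus R$ injects into $\{1,\dots,n+\ell\}$, and $T\cap R$ injects into $R$.
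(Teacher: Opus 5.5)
Your proof is correct and is essentially the paper's argument. Vertices outside $R$ take pairwise distinct values in $\{1,\dots,n+\ell\}$, and at most $r$ vertices lie in $R$; the paper compresses this into a single sentence.
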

\begin{proof}
    Since every vertex outside \(R\) has degree at most \(n+\ell\), we obtain the desired result.
\end{proof}

For each \(i\in\{1,2,\ldots,r\}\), set
\[
U_{v_i}:=N(v_i)\setminus
\bigcup_{\substack{1\le j\le r\\ j\ne i}}N[v_j].
\]
By definition, we have $U_{v_i}\cap R=\emptyset$ for every \(i\), and
$U_{v_i}\cap U_{v_j}=\emptyset$ for all $i\ne j$.

\begin{lem}\label{lem:U}
Assume that \(r\ge \ell+1\). For every \(i\in\{1,2,\ldots,r\}\), the set \(U_{v_i}\) contains two vertices of the same degree.
\end{lem}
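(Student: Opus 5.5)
The plan is to argue by contradiction, using Lemma~\ref{differentdegree} to turn the hypothesis ``$U_{v_i}$ has pairwise distinct degrees'' into ``a large set of vertices has pairwise distinct positive degrees''. The size of that set will then be compared against Lemma~\ref{eq:v1bound}.

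\textbf{Step 1: locate the possible repeated degrees.} Suppose that the degrees of the vertices in $U_{v_i}$ are pairwise distinct. Take two distinct vertices $s,s'\in N(v_i)\setminus R$ with $d(s)=d(s')$. If one of them, say $s'$, lies in $N(v_j)$ for some $j\ne i$, then Lemma~\ref{differentdegree} applied with $s\in N(v_i)$ and $s'\in N(v_j)$ gives $d(s)\ne d(s')$, a contradiction. Hence both $s,s'\in U_{v_i}$ (vertices of $R$ are excluded from $U_{v_i}$ by definition). Under our assumption this is impossible, so all vertices of $N(v_i)\setminus R$ have pairwise distinct degrees.

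The same argument applies to every $j\ne i$: two vertices of $N(v_j)\setminus R$ can share a degree only if both lie in $U_{v_j}$. Moreover, a vertex of $N(v_i)$ and a different vertex of $N(v_j)$ never share a degree.

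\textbf{Step 2: build a large set with distinct degrees.} Form $X$ from the following pieces:
\begin{itemize}[label=--]
\item all of $R$, whose degrees are pairwise distinct by Lemma~\ref{2kL+} and exceed $n+\ell$;
\item all of $N(v_i)\setminus R$;
\item for one fixed index $j\ne i$ with $d(v_j)$ as large as possible (so $j=1$, or $j=2$ if $i=1$), the set $N(v_j)\setminus(N(v_i)\cup R\cup U_{v_j})$, together with one representative from each degree class of $U_{v_j}\setminus N(v_i)$.
\end{itemize}
By Step~1, all vertices of $X$ have pairwise distinct positive degrees. Lemma~\ref{eq:v1bound} then gives $|X|\le n+\ell+r$. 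Since all of $R$ is included, the non-$R$ part of $X$ can have at most $n+\ell$ vertices.

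\textbf{Step 3: the counting contradiction, which is the main obstacle.} The remaining task is to show that the non-$R$ part of $X$ has more than $n+\ell$ vertices.

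Already $|N(v_i)\setminus R|\ge d(v_i)-(r-1)\ge n+\ell+2-i$ by \eqref{eq:degree-lower-q}. This suffices when $v_i$ has few neighbours in $R$, but not in general. I would first try to gain the missing vertices from $N(v_j)\setminus N(v_i)$. The rough comparison is
\[
|N(v_i)\cup N(v_j)|\ \ge\ d(v_j)\ \ge\ n+\ell+r-j+1 .
\]
The difficulty is that collapsing degree classes inside $U_{v_j}$ may lose vertices. To control this, I would use a second constraint: the degree values available to non-$R$ vertices are at most $n+\ell$, while the degree values of $N(v_i)\setminus R$ and of $N(v_j)\setminus R$ must be disjoint. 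If this simple count falls short, I would fall back on the edge count $e(G)\ge (n^2+n)/2$. This forces the multiset of non-$R$ degrees to be large, so the degrees in $N(v_i)\setminus R$, being all distinct and sharing no value with other neighbourhoods, leave too few values for the remaining vertices. I would make that precise by a multiset bound of the type in Lemma~\ref{lem3.2}.
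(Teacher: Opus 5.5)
\textbf{There is a genuine gap in Step~3.} Your Steps~1 and~2 are sound. Under the assumption, $N[v_i]$ has pairwise distinct degrees. By Lemma~\ref{differentdegree}, a vertex outside $N[v_i]$ that lies in some $N(v_j)$ with $j\ne i$ has a degree different from every vertex of $N(v_i)$. Step~3, however, is never carried out, and the set $X$ you build for it cannot give the contradiction in general.

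Since $d(v_i)$ may equal $n+\ell+1+r-i$ and $R$ may lie entirely inside $N[v_i]$, you need $i-1$ extra vertices outside $N[v_i]$ with pairwise distinct degrees. Drawing them from a single neighbourhood $N(v_1)$ does not guarantee this. The set $N(v_1)\setminus N[v_i]$ has at least $d(v_1)-d(v_i)\ge i-1$ vertices, but they may all lie in $U_{v_1}$ and share one degree; that is exactly what the lemma predicts can happen. In that case $X$ gains a single vertex beyond $N[v_i]$, so $|X|=d(v_i)+2=n+\ell+r+3-i\le n+\ell+r$ for $i\ge 3$, and there is no contradiction. Your fallback to the edge count and a multiset bound like Lemma~\ref{lem3.2} is not an argument. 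It is also unnecessary: the lemma follows from the path condition and $r\ge\ell+1$ alone.

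\textbf{Missing idea: take one vertex from each of $v_1,\dots,v_{i-1}$.} Vertices attached to different $v_m$ automatically have distinct degrees by Lemma~\ref{differentdegree}, so no degree classes collapse. The paper does this by a minimal counterexample:
\begin{itemize}
\item Let $k$ be the least index such that $U_{v_k}$ has pairwise distinct degrees.
\item Then each $U_{v_m}$ with $m<k$ contains two vertices of equal degree, so it is nonempty; choose $u_m\in U_{v_m}$.
\item The set $N[v_k]\cup\{u_1,\dots,u_{k-1}\}$ has size $d(v_k)+k\ge n+\ell+r+1$ by \eqref{eq:degree-lower-q}, and its vertices have pairwise distinct positive degrees. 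This contradicts Lemma~\ref{eq:v1bound}.
\end{itemize}

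You can also repair your own fixed-$i$ framework without the minimality trick:
\begin{itemize}
\item Note that $|N(v_m)\setminus N[v_i]|\ge d(v_m)-d(v_i)\ge i-m$ for every $m<i$.
\item Choose distinct vertices greedily in the order $x_{i-1},x_{i-2},\dots,x_1$ with $x_m\in N(v_m)\setminus N[v_i]$. When $x_m$ is chosen, only $i-1-m$ vertices have been used, so a choice always exists.
\item Then $N[v_i]\cup\{x_1,\dots,x_{i-1}\}$ has $d(v_i)+i\ge n+\ell+r+1$ vertices of pairwise distinct positive degrees. This uses Lemma~\ref{differentdegree}, together with Lemma~\ref{2kL+} and the definition of $R$ for comparisons with $v_i$. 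It gives the same contradiction.
\end{itemize}
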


\begin{proof}
Suppose not, and let \(k\in\{1,2,\ldots,r\}\) be the smallest index such
that all vertices in \(U_{v_k}\) have pairwise distinct degrees. Then, for each
\(i<k\), the set \(U_{v_i}\) contains two vertices of the same degree; in
particular, \(U_{v_i}\ne\emptyset\). Choose one vertex $u_i\in U_{v_i}$ for each \(i<k\), and define
\[
S:=N[v_k]\cup\{u_1,u_2,\ldots,u_{k-1}\}.
\]
Since \(u_i\in U_{v_i}\), we have \(u_i\notin N[v_k]\) for every \(i<k\).
Moreover, the sets \(U_{v_i}\) are pairwise disjoint. Thus,
\begin{equation}\label{eq:S-size-37}
|S|=|N[v_k]|+k-1=d(v_k)+k.
\end{equation}

We claim that all vertices in \(S\) have pairwise distinct degrees. Suppose,
for a contradiction, that there exist two vertices \(x,y\in S\) with
$d(x)=d(y)$. Similarly, by Lemma \ref{2kL+} and the definition of $R$, we may assume that $x,y\notin R$. Hence, neither $x$ nor $y$ is $v_k$.
First, suppose that both \(x\) and \(y\) are among the chosen vertices \(u_1,\ldots,u_{k-1}\), say \(x=u_a\) and \(y=u_b\) for some $a\ne b$. 
Then \(u_a\in N(v_a)\) and \(u_b\in N(v_b)\), and Lemma \ref{differentdegree} gives \(d(x)\ne d(y)\), a contradiction. Next, suppose that exactly one of \(x,y\) is among \(u_1,\ldots,u_{k-1}\). Without loss of generality, let \(x=u_a\) for some \(a<k\), and let \(y\in N(v_k)\). Again, applying Lemma \ref{differentdegree} with \(x=u_a\in N(v_a)\) and \(y\in N(v_k)\) gives \(d(x)\ne d(y)\), a contradiction.
Finally, suppose that \(x,y\in N(v_k)\). If both \(x\) and \(y\) belong to \(U_{v_k}\), then their degrees would be distinct by the choice of \(k\). Hence, without loss of generality, we may assume $x\in N(v_k)\setminus U_{v_k}.$ By the definition of \(U_{v_k}\), there exists some \(j\neq k\) such that \(x\in N[v_j]\).
Since \(x\notin R\), we have $x\in N(v_j).$ Together with \(y\in N(v_k)\), Lemma \ref{differentdegree} again yields a contradiction.
Therefore, all vertices in \(S\) have pairwise distinct degrees, as claimed. 

By the definition of $S$, every vertex in $S$ has positive degree.
Together with the above claim and Lemma~\ref{eq:v1bound}, we have $|S|\le n+\ell+r$.
Combining with \eqref{eq:degree-lower-q} and \eqref{eq:S-size-37}, we obtain
\[
n+\ell+r\ge |S|=d(v_k)+k\ge n+\ell+r+1,
\]
which is impossible. This proves the lemma.
\end{proof}

\begin{lem}\label{r2n}
We have $r <2\sqrt{n}$.
\end{lem}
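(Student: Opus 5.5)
We argue by contradiction: assume $r\ge 2\sqrt n$, which for large $n$ gives $r\ge \ell+1$, so Lemmas~\ref{2kL+}, \ref{differentdegree} and \ref{lem:U} all apply. The plan is to find about $r^2/2$ distinct vertices of positive, pairwise distinct degree. That is more than the roughly $n+\ell+r$ allowed by Lemma~\ref{eq:v1bound} once $r\ge 2\sqrt n$.

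First we collect disjoint neighbourhood mass. By \eqref{eq:degree-lower-q}, each $v_j$ has degree at least $n+\ell+1+r-j$. The sets $N(v_j)\setminus\bigcup_{i<j}N[v_i]$ are pairwise disjoint. I would like to show that their union contains a large set $T$ of vertices, all with pairwise distinct degrees. This is where Lemma~\ref{differentdegree} is used: two vertices lying in neighbourhoods of \emph{different} $v_i$'s always have different degrees. So the only coincidences of degree can come from pairs $s,t$ that both lie only in $N(v_i)$ for a single $i$, i.e.\ both in $U_{v_i}$. For a vertex lying in $N(v_i)\cap N(v_j)$ with $i\ne j$, Lemma~\ref{differentdegree} forces its degree to differ from that of every other vertex in $\bigcup_k N(v_k)$. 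Hence, within $W:=\bigcup_k N(v_k)$, each degree value occurs at most once outside a single $U_{v_i}$. Any repeated value is confined to one $U_{v_i}$.

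Next we count. Choose $T$ consisting of all vertices of $W\setminus\bigcup_i U_{v_i}$ together with one representative of each degree class inside each $U_{v_i}$, and add the vertices of $R$ themselves. This $T$ has pairwise distinct positive degrees, so $|T|\le n+\ell+r$ by Lemma~\ref{eq:v1bound}. Now I need a lower bound on $|T|$. Observe that $|N[v_r]|\ge n+\ell+2$, and the same argument as in the proof of Lemma~\ref{lem:U} shows $N[v_k]\cup\{u_1,\dots,u_{k-1}\}$ has distinct degrees if each $U_{v_i}$ contributes one representative. To gain more, I would bound how much the union $W$ exceeds a single neighbourhood. Since the graph has only $2n$ vertices and each $v_j$ has degree more than $n$, the neighbourhoods overlap heavily. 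So instead I would use the degree-sum: the degrees in $T$ are distinct integers, at most $2n-1$.

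The main obstacle is turning "many distinct degrees" into a contradiction, since the $v_j$ have degrees up to $2n-1$ and Lemma~\ref{eq:v1bound} counts $r$ only once. The refinement I would try first is this. Each $u_i\in U_{v_i}$ is adjacent to $v_i$ but not to any other $v_j$, so $d(u_i)\le 2n-r$. The vertices of $R$ occupy the $r$ distinct values in $[n+\ell+1,2n-1]$. Every vertex of $W\setminus R$ with a unique degree must have degree at most $n+\ell$ or coincide with no $R$-degree. By Lemma~\ref{differentdegree}, vertices of $N(v_i)\cap R$ and of $N(v_i)\setminus R$ cannot share degrees either. So the distinct degrees available to $W$ number at most $2n-1$, of which $R$ uses $r$. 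The count $|W|\ge d(v_r)+(r-1)\ge n+\ell+2r$, combined with Lemma~\ref{eq:v1bound} and the fact that each $U_{v_i}$ loses at least one vertex to a repeated degree (Lemma~\ref{lem:U}), should give $n+\ell+r\ge n+\ell+2r-r+\dots$. Making the extra $\Theta(r^2/n)$ gain quantitative would be done via counting the edges among $R$: the common neighbourhoods have size $\ge 2\ell+2$, and the degrees $d(v_j)\ge n+r-j$ force $\sum_j d(v_j)\ge rn+r^2/2$. Hence some vertex outside $R$ has many neighbours in $R$, and this yields the required inequality $r^2/4\le n$, i.e.\ $r<2\sqrt n$.
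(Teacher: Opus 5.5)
There is a genuine gap: the proposal never reaches a contradiction.

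\textbf{What is correct.} Your structural observation holds. By Lemma~\ref{differentdegree}, a vertex lying in two of the neighbourhoods $N(v_i)$, or in $R$, has a degree shared by no other vertex of $W=\bigcup_k N(v_k)$. So repeated degrees in $W$ are confined to a single $U_{v_i}$, and your set $T$ does satisfy $|T|\le n+\ell+r$.

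\textbf{Where it breaks down.} Your final paragraph does not derive any inequality. Lemma~\ref{lem:U} (each $U_{v_i}$ loses a vertex to a repeated degree) only makes $T$ smaller, which works against the lower bound on $|T|$ you need. The estimate $\sum_j d(v_j)\ge rn+r^2/2$, or the existence of a vertex outside $R$ with many neighbours in $R$, contradicts nothing on its own. So the conclusion $r^2/4\le n$ is asserted rather than proved. Also, $d(v_r)+r-1\ge n+\ell+r$, not $n+\ell+2r$.

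Your stated plan also aims at the wrong target. The $\Theta(r^2)$ vertices you need are not vertices of pairwise distinct degree; they sit inside the $U_{v_i}$, where degrees do repeat. The contradiction must come from the total vertex count $2n$, not directly from Lemma~\ref{eq:v1bound}.

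\textbf{The missing step.} Use the bound $|T|\le n+\ell+r$ separately for each $j$ to force each $U_{v_j}$ to be large. Your $T$ contains $(N[v_j]\setminus U_{v_j})\cup\{u_1,\dots,u_r\}$ with $u_i\in U_{v_i}$ (nonempty by Lemma~\ref{lem:U}), and these two parts are disjoint. Hence $n+\ell+r\ge d(v_j)+1-|U_{v_j}|+r$, that is, $|U_{v_j}|\ge d(v_j)+1-n-\ell\ge r-j+2$ by \eqref{eq:degree-lower-q}.

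In words: after the $r$ representatives use up $r$ degree values, $N[v_j]$ has at least $r-j+2$ more vertices than the remaining $n+\ell$ values. This excess must be absorbed by $U_{v_j}$, the only place where degrees may repeat. Since the $U_{v_j}$ are pairwise disjoint, $2n\ge\sum_{j=1}^r|U_{v_j}|\ge r(r+3)/2$, which gives $r<2\sqrt n$. This is the paper's argument, and no edge counting within $R$ is needed.
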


\begin{proof}
If $r\le \ell$, then $r\le \ell<2\sqrt{n}$ for sufficiently large $n$.
Hence, we may assume that $r\ge \ell+1$, to which Lemmas \ref{differentdegree} and \ref{lem:U} apply.

By Lemma \ref{lem:U}, for each \(i\in\{1,2,\ldots,r\}\), we can choose one vertex $u_i\in U_{v_i}.$ Fix \(j\in\{1,2,\ldots,r\}\), and define
\[
S_j:=\bigl(N[v_j]\setminus U_{v_j}\bigr)\cup\{u_1,u_2,\ldots,u_r\}.
\]
The two parts in this union are disjoint. Indeed, \(u_j\in U_{v_j}\), while for
\(i\ne j\), the definition of \(U_{v_i}\) gives \(u_i\notin N[v_j]\). Thus,
\begin{equation}\label{eq:Sj-size-38}
|S_j|=|N[v_j]\setminus U_{v_j}|+r=d(v_j)+1-|U_{v_j}|+r.
\end{equation}
By the same argument used in the proof of Lemma \ref{lem:U} to show that the vertices in $S$ have pairwise distinct degrees, we may apply Lemma \ref{differentdegree} to conclude that the vertices in $S_j$ are of pairwise distinct degrees. 
Note that every vertex in $S_j$ has positive degree.
By Lemma~\ref{eq:v1bound}, we find $|S_j|\le n+\ell+r$.
It follows from \eqref{eq:Sj-size-38} that
\[n+\ell+r\ge |S_j|=d(v_j)+1-|U_{v_j}|+r. \]
Using \eqref{eq:degree-lower-q}, we further obtain
\begin{equation}\label{eq:Uvj-lower-q}
|U_{v_j}|\ge d(v_j)+1-n-\ell \ge r-j+2.
\end{equation}

Since the sets \(U_{v_1},U_{v_2},\ldots,U_{v_r}\) are pairwise disjoint, we deduce by \eqref{eq:Uvj-lower-q} that
\[
2n = |V(G)| \ge \sum_{j=1}^{r} |U_{v_j}|\ge \sum_{j=1}^{r} (r-j+2)
= \frac{r(r+3)}{2}.
\]
This gives \(r < 2\sqrt{n}\), completing the proof.
\end{proof}

\begin{lem}\label{lem:R-minus-small}
The number of vertices with degree greater than $n$ is less than $3\sqrt{n}$.
\end{lem}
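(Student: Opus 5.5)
The plan is to split the set $W$ of vertices of degree greater than $n$ as $W=R\cup(W\setminus R)$. Lemma~\ref{r2n} already gives $|R|=r<2\sqrt n$, so it suffices to prove $|W\setminus R|<\sqrt n$. Every vertex of $W\setminus R$ has degree in $\{n+1,\dots,n+\ell\}$, which is only $\ell$ values. So it is enough to show that each degree class $X_d:=\{v: d(v)=d\}$ with $n<d\le n+\ell$ has size less than $\sqrt n/\ell$. In fact I would aim for a constant bound $|X_d|\le C_\ell$, which is far stronger than needed once $n$ is large.

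Fix such a $d$ and suppose, for contradiction, that $m:=|X_d|$ is large. The basic fact to exploit is the analogue of \eqref{eq:common-neighborhood-q}: any two vertices of degree $\ge n+1$ have at least $2$ common neighbours. This is too weak for the greedy construction in Lemma~\ref{2kL+}. The key observation, however, is that in a path of length $2\ell$ only the two \emph{endpoints} need to lie in $X_d$, while the $2\ell-1$ interior vertices may be arbitrary. So I would look for $x,y\in X_d$ joined by such a path, with $x$ and $y$ adjacent to the two ends of a suitable path of length $2\ell-2$ inside a dense part of $G$. The natural source of this dense part is a core: since $e(G)\ge (n^2+n)/2$, deleting vertices of degree $<2\ell+4$ iteratively removes at most $(2\ell+4)\cdot 2n$ edges. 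Hence the core of $G$ (in the sense of $\operatorname{core}(\cdot)$ defined in the notation) has order linear in $n$ and is extremely dense. Each $x\in X_d$ has more than $n$ neighbours, and all but $O(\ell)$ of them lie in the core unless the core is small, which edge counting rules out.

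Inside the core I would use minimum degree $\ge 2\ell+4$ to greedily grow paths. From a common neighbour $a\in N(x)\cap N(y)$ (which exists), I would try to extend to a walk of the right length that avoids previously used vertices. Many choices of endpoints are available, so I would pick $x,y$ whose common neighbourhood avoids a bounded set of used vertices. Double counting $\sum_a\binom{|N(a)\cap X_d|}{2}\ge 2\binom m2$ shows that when $m$ is large, many pairs in $X_d$ have common neighbours spread over many distinct vertices $a$. In particular, for $m\ge C_\ell$ I can find pairs $(x_0,x_1),(x_1,x_2),\dots$ inside $X_d$ whose common neighbours $a_1,a_2,\dots$ are pairwise distinct and distinct from the $x_i$. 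Concatenating $x_0a_1x_1a_2\cdots a_\ell x_\ell$ then gives a path of length $2\ell$ with equal-degree endpoints $x_0,x_\ell$, a contradiction. Concretely, I would build an auxiliary graph on $X_d$ joining $x,x'$ when $|N(x)\cap N(x')|\ge 2$ (true for all pairs). I would then need a path $x_0x_1\cdots x_\ell$ in this complete auxiliary graph together with a system of distinct representatives $a_t\in N(x_{t-1})\cap N(x_t)$ avoiding $\{x_0,\dots,x_\ell\}$. A Hall-type or random-choice argument over the $\binom m2$ pairs should supply this once $m$ is a large constant, because a single vertex $a$ cannot serve as the only common neighbour of too many pairs without forcing $a$ to have huge degree inside $X_d$.

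The main obstacle is precisely this distinct-representatives step. Common neighbourhoods of size exactly $2$ could in principle be concentrated on a few hub vertices, or could consist of other vertices of $X_d$. I would handle this by case analysis. If some vertex $a$ is a common neighbour of at least $m/2$ vertices of $X_d$, then hubs are themselves in $W$ with huge degree, and chaining through a few such hubs, in the style of Lemma~\ref{2kL+}, gives the path. Otherwise the common-neighbour structure is spread out, and the greedy choice of distinct $a_t$ succeeds. If this case analysis becomes too delicate, my fallback is to rerun the argument of Lemmas~\ref{lem:U} and~\ref{r2n} with the threshold $n+\ell+1$ lowered to $n+1$. This would again produce a set of pairwise distinct degrees larger than Lemma~\ref{eq:v1bound} allows, yielding a bound of order $\sqrt n$ for $|W|$ directly.
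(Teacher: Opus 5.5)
Your reduction is the same as the paper's. Lemma~\ref{r2n} handles $R$, and it remains to bound each class $X_d$ with $n<d\le n+\ell$. Your target of a constant bound is even correct. But the step that bounds a single class is never carried out, and the tools you propose for it do not work.

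\begin{itemize}
\item The inequality $\sum_a\binom{|N(a)\cap X_d|}{2}\ge 2\binom m2$ merely restates that each pair has two common neighbours. It therefore cannot show that common neighbours are spread out.
\item The hub case rests on a false claim. A vertex that is a common neighbour of $m/2$ vertices of $X_d$ is only forced to have degree at least $m/2$, far below $n$ for constant $m$ (or even $m\approx\sqrt n$). So hubs need not lie in $W$, and there is no analogue of \eqref{eq:common-neighborhood-q} through which to chain them.
\item The core discussion is unjustified: the core only has minimum degree $2\ell+4$, not ``extremely dense'', and nothing shows it contains all but $O(\ell)$ neighbours of each $x\in X_d$. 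It is also never used in the concatenation you finally want.
\item The fallback fails for the reason you noted yourself. Lemmas~\ref{2kL+} and~\ref{differentdegree}, on which Lemmas~\ref{lem:U} and~\ref{r2n} rest, use the threshold $n+\ell+1$ precisely to get $2\ell+2$ common neighbours for the greedy choice of distinct $a_t$. With threshold $n+1$ you only get $2$.
\end{itemize}

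The missing idea is to count edges rather than pairs; the distinct-representatives problem then disappears. Suppose $|X_d|\ge 2\ell+1$. Take $A\subseteq X_d$ with $|A|=2\ell+1$ and let $J:=G[A,V(G)\setminus A]$. Every $a\in A$ has $d_J(a)\ge d-2\ell\ge n+1-2\ell$. Hence $e(J)\ge(2\ell+1)(n+1-2\ell)>2\ell n=\ell\,|V(J)|$ for $n\ge 4\ell^2$. By Theorem~\ref{path-turan}, $J$ contains a path of length $2\ell+1$. Since $J$ is bipartite, deleting its endpoint outside $A$ leaves a path $x_0a_1x_1\cdots a_\ell x_\ell$ of length $2\ell$ with $x_0,x_\ell\in A$. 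This is exactly your concatenation, with the $a_t$ automatically distinct and no hub analysis needed. It gives $|X_d|\le 2\ell$, so $|W\setminus R|\le 2\ell^2<\sqrt n$. The paper runs the same argument with $|A|=\lceil\sqrt n/\ell\rceil$, after pigeonholing over the $\ell$ possible degrees.
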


\begin{proof}
Set $R^*:=\{v\in V(G): n<d(v)<n+\ell+1\}$.
By Lemma \ref{r2n}, it suffices to prove \(|R^*|< \sqrt n\).
Suppose that \(|R^*|\ge \sqrt n\). 
By the Pigeonhole Principle, there exist \(q\in\{n+1, \ldots ,n+\ell \}\) and \(A\subseteq R^*\) such that
\[
d(x)=q\ \ \text{for all }x\in A,
\qquad
|A|=\left\lceil \frac{\sqrt n}{\ell} \right\rceil.
\]
Consider the subgraph $J:=G[A,V(G)\setminus A].$ 
For every \(a\in A\), $d_J(a)\ge d_G(a)-(|A|-1)\ge n-|A|$. Thus, $$|E(J)|\ge |A|(n-|A|)>4n\ell =2\ell|V(J)|$$ for sufficiently large \(n\). 
By Theorem~\ref{path-turan}, $J$ contains a path $P$ of length $2\ell+1$. Since \(J\) is bipartite, $P$ contains a path $P'$ of length $2\ell$ with both endpoints lying in \(A\). 
However, both endpoints have degree \(q\), which is a contradiction.
\end{proof}

\subsection{Equal degree vertices}
Throughout this section, for two fixed vertices $s$ and $t$, we simply write $A_{s}:=A_{s}^{(st)}$, $A_{t}:=A_{t}^{(st)}$, $A_{s,i}:=A_{s,i}^{(st)}$, and $A_{t,i}:=A_{t,i}^{(st)}$, for each $i\in \{1,2,3,4 \}$.

\begin{lem}\label{lem}
Let $s$ and $t$ be two vertices of equal degree in $G$. Then the following statements hold.
\end{lem}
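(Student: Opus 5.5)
Since only the preamble of the statement is visible above, I will plan around the statements this setup is built for. These are structural consequences of the absence of an $s$--$t$ path of length $2\ell$, expressed through the partition $V(G)=\{s,t\}\cup B_{st}\cup A_{st}\cup Y_{st}\cup D_{st}$. I expect them to be of two kinds:
\begin{itemize}
\item \emph{sparsity statements}: $e(Y_{st},V(G))$, $e(B_{st})$ and the edges between $B_{st}$ and the $A$-parts are at most $O(\ell)$ times the number of vertices involved;
\item \emph{non-adjacency statements}: for instance, there are no edges between $A_{s,1}\cup A_{s,2}$ and suitable parts of $A_t$ or $B_{st}$, and $A_{s,3}$ and $A_{t,3}$ see disjoint portions of $D_{st}$.
\end{itemize}
These are exactly the bounds needed later, together with the degree constraints of Lemma~\ref{lem3.2}, to cap $d(s)+d(t)$ and the total edge count.

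\textbf{Path-extension tool.} The tool I would prove first is the following. In a graph $H$ with minimum degree at least $2\ell+4$, every vertex $x$ is the start of a path of every length $m\le 2\ell+3$, and this path can avoid any prescribed set of at most two or three vertices. The argument is greedy: at each step the current endpoint has at least $2\ell+4$ neighbours, and fewer than $2\ell+4$ of them are already used or forbidden. When $H$ is bipartite, as for the cores defining $A_{s,2}$ and $A_{s,3}$, the same greedy walk alternates sides. This lets me end on a prescribed side, and hence fix the parity of the length.

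\textbf{Non-adjacency statements.} The plan is uniform. Suppose there is an edge $xy$ with $x\in A_{s,i}$ and $y$ in a forbidden part. I build $s\,x$, then a path inside the relevant core starting at $x$ and of a chosen length, and then close it up to $t$: via $y\in N(t)$ directly, or via $y\in D_{st}$ together with a vertex of the $t$-side core, whose path I extend backwards from $t$. The core lengths are chosen so that the total is exactly $2\ell$, which is possible because each core provides every length up to $2\ell+3$. Distinctness is maintained by forbidding the at most $O(1)$ already-used vertices at each greedy step. This produces an $s$--$t$ path of length $2\ell$ between vertices of equal degree, a contradiction.

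\textbf{Sparsity of $Y_{st}$.} This comes from the peeling description of cores. $H\setminus V(\operatorname{core}(H))$ can be deleted one vertex at a time, each vertex having degree less than $2\ell+4$ at its time of deletion. Hence $A_{s,4}$ spans fewer than $(2\ell+4)|A_{s,4}|$ edges with each of $A_s\setminus A_{s,1}$, $A_t$ and $D_{st}$. The edges of $A_{s,4}$ into $A_{s,1}$ should be controlled separately. Since $A_{s,1}$ induces a core, a vertex of $A_{s,4}$ with many neighbours in $A_{s,1}$ would extend it, so that case cannot occur. For edges to $B_{st}$ I would use Theorem~\ref{path-turan} on $G[B_{st}]$ and $G[B_{st},\cdot]$. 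A long path there, entered from $s$ and exited to $t$ with the right parity, yields a path of length $2\ell$. So these graphs are $P_{2\ell+1}$-free in the relevant sense and have $O(\ell)$ edges per vertex.

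\textbf{Main obstacle.} I expect the hardest part to be the parity and length bookkeeping when both endpoints route through $B_{st}$, $D_{st}$ and the two cores simultaneously. In particular, in the $A_{s,1}$ case the core is not bipartite, so exact length control requires the "every length up to $2\ell+3$" property rather than parity alone. My first attempt will be to reserve the core segment as the last adjustable piece. I will fix every other segment first, then choose the core path's length to hit exactly $2\ell$, using that $2\ell+4$ exceeds the number of vertices that must be avoided.
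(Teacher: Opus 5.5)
\textbf{Gap: (d), (e) and the $\ell=2$ half of (b) are not addressed.}
The lemma asserts five concrete claims.
\begin{enumerate}[label=(\alph*)]
\item For $x\in A_s\setminus A_{s,4}$, every $F\subseteq V(G)\setminus\{x\}$ with $|F|\le 6$, and every even $k\le 2\ell-2$, there is a path of length $k$ from $x$ that ends in $A_s$ and avoids $F$.
\item $\sum_{v\in B_{st}}d(v)\le(6\ell-2)n$, and $|B_{st}|\le 5\sqrt n$ when $\ell=2$.
\item $\sum_{v\in Y_{st}}d(v)\le 60\ell n$.
\item The vertices of $A_s\setminus A_{s,4}$, and likewise of $A_t\setminus A_{t,4}$, have pairwise distinct degrees.
\item If $B_{st}\neq\emptyset$, the vertices of $A_{st}$ have pairwise distinct degrees.
\end{enumerate}
Your greedy extension in the cores, your Erd\H{o}s--Gallai treatment of $G[B_{st}]$ and $G[B_{st},\cdot]$, and your peeling argument for $Y_{st}$ essentially match the paper's proofs of (a), the first half of (b), and (c).

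The real gap is your premise that everything follows from the absence of an $s$--$t$ path of length $2\ell$. That premise cannot yield (d) or (e), and these are the parts used most heavily afterwards, e.g.\ in Lemma~\ref{B=kong}. The missing idea is to forbid paths between two \emph{other} equal-degree vertices, using $s$ (and $t$) as connectors.
\begin{itemize}
\item For (d), with $x,y\in A_s\setminus A_{s,4}$: take by (a) a path of length $2\ell-2$ from $x$ that ends in $A_s$ and avoids $\{s,y\}$, then append $s\,y$.
\item For (e), with $x\in A_s\setminus A_{s,4}$, $y\in A_t\setminus A_{t,4}$ and $w\in B_{st}$: take a path of length $2\ell-4$ from $x$ that ends in $A_s$ and avoids $\{y,s,t,w\}$, then append $s\,w\,t\,y$.
\item For the $\ell=2$ half of (b): if $|B_{st}|\ge 3$, any $a,b\in B_{st}$ are joined by $a\,s\,w\,t\,b$ for a third $w\in B_{st}$. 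So $B_{st}$ has distinct degrees, all at least $2$. Then $\sum_{v\in B_{st}}d(v)\ge(|B_{st}|^2+3|B_{st}|)/2$, and comparing with $10n$ gives $|B_{st}|\le 5\sqrt n$.
\end{itemize}
The non-adjacency statements you propose instead are not part of the lemma. Nor can all of them hold as phrased: every vertex of $A_{s,2}$ has at least $2\ell+4$ neighbours in $A_t$ by definition.

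\textbf{Two quantitative corrections to what you did sketch.}
First, greedy extension in a graph of minimum degree $2\ell+4$ produces a path of length $m$ avoiding $F$ only when $|F|+m\le 2\ell+4$. So ``every length up to $2\ell+3$ while avoiding two or three vertices'' is not available. What holds, and what is needed, is even $k\le 2\ell-2$ with $|F|\le 6$. The six-vertex version matters, because later applications avoid sets such as $\{u,v,w,w_1,w_2,y\}$.

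Second, in (c) peeling does not give fewer than $(2\ell+4)|A_{s,4}|$ edges at $A_{s,4}$. An edge at $A_{s,4}$ may be charged to a vertex of $A_t$, $D_{st}$ or $A_{s,2}\cup A_{s,3}$ that was deleted earlier. The correct bound is $(2\ell+3)|V(H)|\le (2\ell+3)\cdot 2n$ for each of the three auxiliary graphs $H$. This is still $O(\ell n)$, which suffices.
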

\begin{enumerate}[label=(\alph*)] 
\item Let \(x\in A_s\setminus A_{s,4}\). For every set \(F\subseteq V(G)\setminus\{x\}\) with \(|F|\le 6\) and every non-negative even integer \(k\le 2\ell-2\), there exists a path of length \(k\) starting at \(x\), ending in \(A_s\), and avoiding \(F\). The analogous statement holds with \(s\) and \(t\) interchanged.
\item  We have \(\sum_{v\in B_{st}} d_G(v) \le (6\ell-2)n\). Furthermore, when \(\ell=2\), we have \(|B_{st}| \le 5\sqrt{n}\).
\item   We have $\sum_{v\in Y_{st}} d_G(v) \le 60\ell n$.
\item  Vertices in $A_s\setminus A_{s,4}$ have pairwise distinct degrees; the same holds for $A_t\setminus A_{t,4}$.
\item  If $B_{st}\neq\emptyset$, then all vertices in $A_{st}$ have pairwise distinct degrees.
\end{enumerate}
\begin{proof}
(a) The case \(k=0\) is clear. Let \(k\ge 2\). Since \(x\in A_s\setminus A_{s,4}\), \(x\) belongs to one of \(A_{s,1},A_{s,2},A_{s,3}\). 
Let \(H\) be $\operatorname{core}(G[A_s])$ if $x\in A_{s,1}$,  be $\operatorname{core}(G[A_s\setminus A_{s,1},A_t])$ if $x\in A_{s,2}$, and be $\operatorname{core}(G[A_s\setminus(A_{s,1}\cup A_{s,2}),D_{st}])$ if $x\in A_{s,3}$. 
Then \(x\in V(H)\) and \(H\) has minimum degree at least \( 2\ell+4\). We greedily build a path \(v_0v_1\cdots v_k\) in \(H\), with \(v_0=x\), avoiding \(F\). 
Assume that \(v_0,\ldots,v_i\) have been chosen for some \(i<k\).
Since $|F\cup\{v_0,\ldots,v_{i-1}\}|\le 6+(k-1)\le 2\ell+3$ and $d_H(v_i)\ge 2\ell +4$, we can find a neighbor of $v_i$ avoiding $F\cup\{v_0,\ldots,v_{i-1}\}$.
Thus we obtain a path of length \(k\) starting at \(x\) and avoiding \(F\). If \(x\in A_{s,1}\), this path lies in \(A_s\). If \(x\in A_{s,2}\) or \(x\in A_{s,3}\), since $H$ is bipartite and  \( k\) is even, the endpoint other than $x$ also lies in \(A_s\). The statement with \(s,t\) interchanged is identical.

(b) We claim that $G[B_{st}]$ contains no path of length $2\ell-2$. Indeed, a path $x_0x_1\cdots x_{2\ell-2}$ inside $B_{st}$ would give rise to $s x_0x_1\cdots x_{2\ell-2} t$, a path of length $2\ell$ whose endpoints $s,t$ have equal degree. Thus, by Theorem \ref{path-turan},
\begin{equation}\label{eq:B-internal-small}
e(B_{st}) \le (\ell-1)|B_{st}|\le (2\ell-2)n .
\end{equation}

Next consider the bipartite graph $H_{B_{st}} := G\bigl[B_{st},\ V(G)\setminus\bigl(B_{st}\cup\{s,t\}\bigr)\bigr]$. 
We claim that $H_{B_{st}}$ contains no paths of length $2\ell -1$. Otherwise, assume that $H_{B_{st}}$ contains a path $P$ of length $2\ell -1$. Since $H_{B_{st}}$ is bipartite, $P$ contains a path $P'$ of length $2\ell -2$ with both endpoints lying in \(B_{st}\). 
Then $sP't$ is a path of length $2\ell$ with equal-degree endpoints, a contradiction.
Hence, Lemma \ref{path-turan} gives
\begin{equation}\label{eq:B-external-small}
e(H_{B_{st}}) \le(\ell-1)|H_{B_{st}}|\le (2\ell-2)n.
\end{equation}
Together with the edges between $B_{st}$ and $\{s,t\}$, equations \eqref{eq:B-internal-small} and \eqref{eq:B-external-small} yield 
\begin{equation}\label{Bst}
    \sum_{v\in B_{st}} d_G(v)=2|B_{st}|+2e(B_{st})+e(H_{B_{st}}) \le(6\ell-2)n.
\end{equation}

We now consider the case $\ell=2$. If $|B_{st}| \le 2$, then $|B_{st}|\le 5\sqrt{n}$ holds. Assume that $|B_{st}| \ge 3$. We claim that the degrees of all vertices in $B_{st}$ are pairwise distinct. Indeed, if two vertices $u,v\in B_{st}$ have the same degree, then picking a third vertex $w\in B_{st}\setminus\{u,v\}$ would yield a path $uswtv$ of length $4$ with equal-degree endpoints, a contradiction. Note that every vertex in $B_{st}$ is adjacent to both $s$ and $t$, and hence has degree at least $2$. Consequently, 
\[
\sum_{v\in B_{st}} d_G(v) \ge \sum_{i=2}^{|B_{st}|+1} i = \frac{|B_{st}|^2 + 3|B_{st}|}{2}.
\] 
Combining the above inequality with \eqref{Bst}, we obtain that \(|B_{st}| \le 5\sqrt{n}\).

(c) We claim that, for any graph \(H\), $e(V(H), V(H)\setminus V(\operatorname{core}(H)))\le (2\ell+3)|V(H)|$.
Indeed, if we iteratively delete vertices in $H$ of current degree less than \(2\ell+4\) until no such vertex remains, then the remaining subgraph is \(\operatorname{core}(H)\), as each vertex in \(\operatorname{core}(H)\) won't be deleted during the process. Moreover, $e(V(H), V(H)\setminus V(\operatorname{core}(H)))$ is the number of edges we deleted during the process. Since each deleted vertex removes at most \(2\ell+3\) edges, we find $e(V(H), V(H)\setminus V(\operatorname{core}(H)))\le (2\ell+3)|V(H)|$, as claimed.

Note that \(2n\ge |A_s|+|A_t|= 2|A_s|\). Applying the above claim with \(H = G[A_s]\), we have
\[
e_G(A_s,A_{s,4}) \le e(V(H), V(H)\setminus V(\operatorname{core}(H))) \le (2\ell+3)|A_s| \le (2\ell+3)n.
\]
Similarly, taking \(H = G[A_s \setminus A_{s,1}, A_t]\) yields
\[
e_G( A_t,A_{s,4}) \le e(V(H), V(H)\setminus V(\operatorname{core}(H))) \le (4\ell+6)n.
\]
Moreover, taking \(H=G[A_s \setminus (A_{s,1} \cup A_{s,2}), D_{st}]\) leads to
\[
e_G(D_{st},A_{s,4}) \le e(V(H), V(H)\setminus V(\operatorname{core}(H))) \le (4\ell+6)n.
\]
Note that in the above three cases, we have used the fact that $A_{s,4}\subseteq V(H)\setminus V(\operatorname{core}(H)))$ by the definition of $A_{s,4}$.
According to Lemma~\ref{lem}(b), we have
\[
e_G(B_{st},A_{s,4}) \le \sum_{v \in B_{st}} d_G(v) \le (6\ell-2)n.
\]
Together with the $|A_{s,4}|$ edges between \(A_{s,4}\) and \(\{s,t\}\), we deduce that
\begin{align*}
\sum_{v \in A_{s,4}} d_G(v) \le 2e_G(A_s,A_{s,4}) + e_G(A_t,A_{s,4}) + e_G(D_{st},A_{s,4}) + e_G(B_{st},A_{s,4}) + |A_{s,4}| \le 30\ell n.
\end{align*}
By symmetry, $\sum_{v\in A_{t,4}}d_G(v)\le 30\ell n$. The desired estimate then follows as $Y_{st}=A_{s,4}\cup A_{t,4}$.

(d) Let \(x,y\) be any two vertices in \(A_s\setminus A_{s,4}\). Applying (a) with \(F=\{s,y\}\) and \(k=2\ell-2\), we find a path $x_0x_1\cdots x_{2\ell-2}$ avoiding \(F\), with $x_0=x$ and \(x_{2\ell-2}\in A_s\). Then $x_0x_1\cdots x_{2\ell-2}sy$ is a \(2\ell\)-path, which implies $d(x)\ne d(y)$. Thus vertices in \(A_s\setminus A_{s,4}\) have pairwise distinct degrees. The proof for \(A_t\setminus A_{t,4}\) is identical.

(e) Assume \(B_{st}\neq\emptyset\), and choose \(w\in B_{st}\). By (d), it remains to exclude equal-degree pairs $x\in A_s\setminus A_{s,4}$ and $y\in A_t\setminus A_{t,4}$. Using (a) with \(F=\{y,s,t,w\}\) and \(k=2\ell-4\), we obtain a path $x_0x_1\cdots x_{2\ell-4}$ avoiding \(F\), with $x_0=x$ and \( x_{2\ell-4}\in A_s\). Hence, $x_0x_1\cdots x_{2\ell-4}s w t y$ is a \(2\ell\)-path, yielding $d(x)\ne d(y)$. This completes the proof.
\end{proof}

\begin{lem}\label{eq:distinct-degree-sum}
    Let \(F\subseteq V(G)\) be a set of vertices of pairwise distinct degrees. Then,
    \begin{equation*}
\sum_{v\in F}d_G(v)\le n|F|-\frac{|F|^2}{2}+O(n^{3/2}) \notag\le \frac{n^2}{2}+O(n^{3/2}).
\end{equation*}
\end{lem}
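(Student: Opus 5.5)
We want to bound the sum of pairwise distinct degrees in $F$. The plan is to split $F$ according to whether the degree exceeds $n$, using Lemma~\ref{lem:R-minus-small} for the high part and a rearrangement argument for the rest.

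First, split $F=F_{>}\cup F_{\le}$, where $F_{>}$ consists of the vertices of $F$ with degree greater than $n$. By Lemma~\ref{lem:R-minus-small}, $|F_{>}|<3\sqrt n$. Every degree is at most $2n-1$, so
\[
\sum_{v\in F_{>}} d_G(v)\le 3\sqrt n\cdot 2n=O(n^{3/2}).
\]

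Next, the vertices of $F_{\le}$ have pairwise distinct degrees in $\{0,1,\dots,n\}$. The sum of $m:=|F_{\le}|$ distinct integers at most $n$ is maximised by taking the top $m$ values, giving
\[
\sum_{v\in F_{\le}} d_G(v)\le \sum_{i=0}^{m-1}(n-i)=nm-\frac{m(m-1)}{2}\le nm-\frac{m^2}{2}+\frac{m}{2}.
\]
Since $m\le n+1$ and the term $m/2$ is $O(n)$, this is $nm-m^2/2+O(n)$.

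Now compare with the target expression in $|F|$. Write $|F|=m+p$ with $p=|F_{>}|<3\sqrt n$. Then
\[
n|F|-\frac{|F|^2}{2}=nm-\frac{m^2}{2}+p\,(n-m)-\frac{p^2}{2}.
\]
We need $nm-m^2/2\le n|F|-|F|^2/2+O(n^{3/2})$, that is, $p(n-m)-p^2/2\ge -O(n^{3/2})$. This holds because $n-m\ge -1$ and $p=O(\sqrt n)$. Adding the $O(n^{3/2})$ contribution from $F_{>}$ gives the first inequality. The second inequality follows because $n|F|-|F|^2/2$ is maximised at $|F|=n$, where its value is $n^2/2$.

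The only delicate point is the bookkeeping when $|F|$ exceeds $n$. The quadratic $x\mapsto nx-x^2/2$ is decreasing there, so the splitting step must use $m\le n+1$ together with $p=O(\sqrt n)$ to keep every error term within $O(n^{3/2})$. Apart from that, the argument is routine.
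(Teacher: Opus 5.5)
Your proof is correct and follows the same route as the paper: split off the fewer than $3\sqrt n$ vertices of degree above $n$ via Lemma~\ref{lem:R-minus-small}, bound the remaining $m$ distinct degrees by the top $m$ values in $\{0,\dots,n\}$, and absorb the difference between $m$ and $|F|$ into the $O(n^{3/2})$ error. Your explicit computation of $p(n-m)-p^2/2$ only spells out the bookkeeping that the paper leaves implicit in its one-line equality up to $O(n^{3/2})$.
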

\begin{proof}
Let \(q\) be the number of vertices in \(F\) having degree greater than \(n\).
By Lemma~\ref{lem:R-minus-small}, we have \(q< 3\sqrt n\). The remaining vertices have distinct degrees in \(\{0,1,\ldots,n\}\). Thus, 
\begin{equation*}
\sum_{v\in F}d_G(v)\le 2nq+\sum_{i=0}^{|F|-q-1}(n-i) =n|F|-\frac{|F|^2}{2}+O(n^{3/2}) \le \frac{n^2}{2}+O(n^{3/2}),
\end{equation*}
where the last inequality follows by the Cauchy-Schwarz inequality.
\end{proof}

\begin{lem}\label{B=kong}
Let $s$ and $t$ be two vertices of equal degree at least \(n-n^{4/5}\). Then $B_{st}=\emptyset.$
\end{lem}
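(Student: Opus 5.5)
We argue by contradiction: suppose $B_{st}\neq\emptyset$, and aim to show that $e(G)$ falls well below $(n^2+n)/2$. Write $d:=d(s)=d(t)\ge n-n^{4/5}$.

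\textbf{Step 1: sizes of the parts.} Lemma~\ref{lem}(b) gives $\sum_{v\in B_{st}}d(v)\le(6\ell-2)n$. We want $|B_{st}|=O(1)$, or at least small. When $\ell=2$ this follows from $|B_{st}|\le 5\sqrt n$. For general $\ell$ we use that vertices of $B_{st}$ have degree at least $2$, so $|B_{st}|\le(3\ell-1)n$, which is too weak. We therefore argue differently. Since $|A_s|=|A_t|=d-|B_{st}|-[st\in E]$ and $|A_s|+|A_t|+|B_{st}|+2\le 2n$, we get
\[
|B_{st}|\ge 2d-2n\ge -2n^{4/5}.
\]
Hence $|A_s|,|A_t|\le n$ and $|D_{st}|\le 2n-2d+|B_{st}|\le 2n^{4/5}+|B_{st}|$.

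\textbf{Step 2: count edges via distinct degrees.} Since $B_{st}\ne\emptyset$, Lemma~\ref{lem}(e) shows that all vertices in $A_{st}$ have pairwise distinct degrees. By Lemma~\ref{eq:distinct-degree-sum}, with $|A_{st}|=m$,
\[
\sum_{v\in A_{st}}d(v)\le nm-\frac{m^2}{2}+O(n^{3/2}).
\]
Now sum the degrees over all of $V(G)$. The contributions are: $A_{st}$ as above; $Y_{st}$ at most $60\ell n$ by Lemma~\ref{lem}(c); $B_{st}$ at most $(6\ell-2)n$; $s,t$ at most $4n$; and $D_{st}$ at most $|D_{st}|\cdot 2n$. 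Note $m\ge 2d-2|B_{st}|-|Y_{st}|-2$.

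The key point is to bound $|B_{st}|$ and $|Y_{st}|$. Every vertex of $Y_{st}$ or $B_{st}$ is adjacent to $s$ or $t$, so has degree at least $1$. The sums then give $|Y_{st}|\le 60\ell n$, which is useless on its own. So we bound the number of vertices instead. If $m\ge 2n-o(n)$, the bound $nm-m^2/2$ is near $0$, far too small. Summing everything,
\[
2e(G)\le nm-\frac{m^2}{2}+O(n^{9/5})+O(n) .
\]
Using $m\le 2n$, the right side is maximized at $m=n$, giving only $n^2/2+O(n^{9/5})$. This is less than $n^2+n$, so $e(G)\le n^2/4+o(n^2)<(n^2+n)/2$, a contradiction.

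The one term needing care is $D_{st}$. Its contribution is $|D_{st}|\cdot 2n\le(2n^{4/5}+|B_{st}|)2n$. Here $|B_{st}|$ must be sublinear; otherwise $D_{st}$ could be large. To handle this, use $\sum_{B}d\ge 2|B_{st}|$, which gives $|B_{st}|\le(3\ell-1)n$, linear, which is not enough. Instead we use the constraint $|A_s|+|A_t|+|B_{st}|+|D_{st}|+2=2n$ together with $|A_s|+|B_{st}|\ge d-1$. This yields $|D_{st}|\le 2n-2d+|B_{st}|$, and also $|A_s|+|A_t|\le 2n-|B_{st}|$.

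I expect the main obstacle to be controlling $|B_{st}|$ for $\ell\ge3$. I would resolve it by estimating the edges incident to $B_{st}\cup D_{st}$ directly. Edges touching $B_{st}$ total at most $(6\ell-2)n=O(n)$. For $D_{st}$, the number of its vertices is at most $2n^{4/5}+|B_{st}|$, and each vertex has degree at most $2n$. If $|B_{st}|\ge n^{0.9}$, then $A_s\cup A_t$ has at most $2n-n^{0.9}$ vertices. The distinct-degree sum then still gives at most about $n^2/2$ for $2e(G)$, while $D_{st}$ contributes at most $O(|B_{st}|n)$. The resulting bound is roughly
\[
\max_b\Bigl(\tfrac{n^2}{2}+4bn\Bigr)\le \tfrac{n^2}{2}+O(\ell n^2)\cdot\frac{b}{n}.
\]
This requires $b\le n/10$ or so. If $b$ is linear, we would instead exploit that $2\ell$-paths through $B_{st}$ are forbidden: the bipartite-graph bound in Lemma~\ref{lem}(b) caps $e(B_{st},D_{st})$ at $O(n)$, and $D_{st}$ has at most $|D_{st}|$ vertices. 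I would finish by a weighted count of $2e(G)=\sum_v d(v)$ with $n^2+n$ as the target, showing it is below $n^2$.
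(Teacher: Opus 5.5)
Your argument for $\ell=2$ is correct and coincides with the paper's first case. There $|B_{st}|\le 5\sqrt n$ gives $|D_{st}|=O(n^{4/5})$, and together with the distinct-degree bound on $A_{st}$ this caps $2e(G)$ at $n^2/2+O(n^{9/5})$. The same crude count also handles $\ell\ge 3$ when $|B_{st}|\le cn$ for a small constant $c$.

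The gap is the case $\ell\ge3$ with $|B_{st}|=\Theta(n)$. Lemma~\ref{lem}(b) does not exclude it, and there your closing ``weighted count'' cannot work. The danger is not $e(B_{st},D_{st})$ but the edges \emph{inside} $D_{st}$, which can number about $n^2/2$. For example, let $st\in E(G)$, let $B_{st}$ consist of $n-1$ vertices adjacent only to $s$ and $t$, and let the remaining $n-1$ vertices form a clique $D_{st}$. Then:
\begin{itemize}
\item $d(s)=d(t)=n$ and $B_{st}\neq\emptyset$;
\item $A_{st}=Y_{st}=\emptyset$ and $e(B_{st},D_{st})=0$;
\item $\sum_{v\in B_{st}}d(v)=2n-2$ and $e(G)=(n^2+n)/2$ exactly.
\end{itemize}
Every inequality you propose to use holds here. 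So no count built from them can bring $2e(G)$ below $n^2+n$, let alone below $n^2$. This graph is excluded only because the clique contains $2\ell$-paths joining vertices of equal degree, so some path argument inside $D_{st}$ is unavoidable.

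That path argument is the missing idea, and the paper supplies it in two layers.
\begin{enumerate}
\item Split $D_{st}$ into $D_1$, the vertices with at least three neighbours in $A_{st}$, and $D_2$, the rest. Any two $z,z'\in D_1$ are joined by a $2\ell$-path $z\,x\,P\,s\,y\,z'$ or $z\,x\,P\,s\,w\,t\,y\,z'$, with $w\in B_{st}$ and $P$ from Lemma~\ref{lem}(a). Hence $D_1$ has pairwise distinct degrees, while $\sum_{v\in D_2}d(v)\le |D_2||D_{st}|+O_\ell(n)$. Feeding these into the degree count shows first that $|D_{st}|>n/2$. It then gives $|D_{st}|=n\pm O_\ell(n^{9/10})$, $|D_1|=O_\ell(n^{9/10})$ and $\sum_{v\in A_{st}}d(v)=O_\ell(n^{19/10})$. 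So almost all edges lie inside $D_2$, and $G[D_2]$ is nearly complete.
\item Remove $O_\ell(n^{19/20})$ exceptional vertices from $D_2$. Any two remaining vertices are joined by a $2\ell$-path: a greedy path of length $2\ell-2$ followed by a common neighbour. So they have distinct degrees. Yet there are $n-O_\ell(n^{19/20})$ of them, all with degree in $n\pm O_\ell(n^{19/20})$, which is impossible by pigeonhole.
\end{enumerate}
Without these two steps, your proposal proves the lemma only for $\ell=2$ or when $|B_{st}|$ is a small fraction of $n$.
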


\begin{proof}
Suppose that \(B_{st}\neq\emptyset\). Define the indicator \(I_{st}\) by \(I_{st}=1\) if \(st\in E(G)\) and \(I_{st}=0\) otherwise. By definition, we have
$|A_s| = |A_t| = d_G(s) - |B_{st}| - I_{st}$
and $|D_{st}| = |B_{st}| + 2n - 2d_G(s) + 2I_{st} - 2$.
Combined with the degree bound \(d_G(s) \ge n - n^{4/5}\), rearranging the second identity yields
\begin{equation}\label{eq:B-D-close}
|B_{st}| \ge |D_{st}| - 2n^{4/5}.
\end{equation}
By Lemma~\ref{lem}(b) and (c), we have
\begin{equation}\label{eq:small-parts}
\sum_{v\in Y_{st}\cup B_{st}\cup\{s,t\}}d_G(v)=O_\ell(n).
\end{equation}

By Lemma~\ref{lem}(e) and the assumption \(B_{st}\neq\emptyset\), all vertices in \(A_{st}\) have pairwise distinct degrees. Therefore, Lemma~\ref{eq:distinct-degree-sum} gives
\begin{equation}\label{Ast}
    \sum_{v\in A_{st}} d_G(v)\le n|A_{st}|-\frac{|A_{st}|^2}{2}+O(n^{3/2}) \le \frac{n^2}{2} + O(n^{3/2}).
\end{equation}

We first consider the case \(\ell=2\). By Lemma~\ref{lem}(b), \(|B_{st}|=O(\sqrt n)\), so \eqref{eq:B-D-close} gives \(|D_{st}|=O(n^{4/5})\). 
Combining \eqref{Ast} with \(|D_{st}|=O(n^{4/5})\) and inequality \eqref{eq:small-parts}, we derive
\[
2|E(G)|=\sum_{v\in D_{st}}d_G(v)
+\sum_{v\in A_{st}}d_G(v)
+\sum_{v\in Y_{st}\cup B_{st}\cup\{s,t\}}d_G(v)
\le \frac{n^2}{2}+O_\ell(n^{9/5}),
\]
which contradicts the assumption that \(|E(G)|\ge (n^2+n)/2\). 

Now, assume \(\ell\ge3\). Define
$D_1:=\{v\in D_{st}:\ |N_G(v)\cap A_{st}|\ge 3\}$ and 
$D_2:=D_{st}\setminus D_1$.
By \eqref{eq:small-parts}, we obtain that
\[
\sum_{v\in D_2} |N_G(v)\cap\left(Y_{st}\cup B_{st}\cup\{s,t\}\right)| \le \sum_{v\in Y_{st}\cup B_{st}\cup\{s,t\}}d_G(v)= O_\ell(n).
\]
Together with the fact that each vertex in \(D_2\) has at most two neighbors in \(A_{st}\) and at most \(|D_{st}|-1\) neighbors in \(D_{st}\), we derive
\begin{equation}\label{D2d}
\sum_{v\in D_2} d_G(v) \le |D_2| |D_{st}| + O_\ell(n).
\end{equation}

We claim that the vertices in \(D_1\) have pairwise distinct degrees. Let \(u,v\) be two vertices in \(D_1\). Since both \(u\) and \(v\) have at least three neighbors in \(A_{st}\), we may choose two distinct vertices
$x\in N_G(u)\cap A_{st}$ and $y\in N_G(v)\cap A_{st}$.
If \(x,y\in A_s\setminus A_{s,4}\), then Lemma~\ref{lem}(a), applied with \(F=\{u,v,s,y\}\) and \(k=2\ell-4\), gives a path \(P\) of length \(2\ell-4\) from \(x\) to some vertex in \(A_s\), avoiding \(F\). Hence $uxPsyv$
is a \(2\ell\)-path, which implies \(d(u)\ne d(v)\). The case \(x,y\in A_t\setminus A_{t,4}\) is similar. It remains to consider the case where \(x,y\) lie on different sides. Without loss of generality, assume $x\in A_s\setminus A_{s,4}$ and $y\in A_t\setminus A_{t,4}$.
Choose any vertex \(w\in B_{st}\). Applying Lemma~\ref{lem}(a) with \(F=\{u,v,s,t,w,y\}\) and \(k=2\ell-6\), we find a path \(P\) of length \(2\ell-6\) from \(x\) to some vertex in \(A_s\), avoiding \(F\). Therefore, $uxPswtyv$ is a \(2\ell\)-path, yielding \(d(u)\ne d(v)\). This proves the claim.

By Lemma~\ref{eq:distinct-degree-sum} and the above claim, we have
\begin{equation}\label{D1d}
    \sum_{v\in D_1}d_G(v) \le n|D_1|-\frac{|D_1|^2}{2}+O(n^{3/2}). 
\end{equation}
Summing \eqref{D2d} and the above inequality, we have
\begin{equation}\label{Dstd}
    \sum_{v\in D_{st}}d_G(v) \le n|D_1|-\frac{|D_1|^2}{2} +|D_2||D_{st}|+O_\ell(n^{3/2}).
\end{equation}

We claim that $|D_{st}|>n/2$. Assume otherwise that $|D_{st}|\le n/2$.
By $|D_{st}|=|D_{1}|+|D_{2}|$, we have
\begin{equation*}
    n|D_1|-\frac{|D_1|^2}{2}+|D_2||D_{st}|-\left(n|D_{st}|-\frac{|D_{st}|^2}{2}\right) = |D_{2}|\left(\frac{|D_1|+|D_{st}|}{2}+ |D_{st}|-n \right)\le 0,
\end{equation*}
where the last inequality follows from $\frac{|D_1|+|D_{st}|}{2}+|D_{st}|-n \le 2|D_{st}|-n \le 0$.
Hence, inequality \eqref{Dstd} implies
\begin{equation*}
    \sum_{v\in D_{st}}d_G(v) \le n|D_{st}|-\frac{|D_{st}|^2}{2} +O_\ell(n^{3/2})\le \frac{3n^2}{8}+O_\ell(n^{3/2}),
\end{equation*}
where the inequality is because $|D_{st}|\le n/2$.
Combining with the bound \eqref{eq:small-parts} and \eqref{Ast}, we derive
\[
2|E(G)| \le \frac{n^2}{2} + \frac{3n^2}{8} + O_\ell(n^{3/2}) < n^2 + n,
\]
which contradicts \(|E(G)| \ge (n^2+n)/2\). Therefore, \(|D_{st}| > n/2\).

Note that \eqref{eq:B-D-close} implies $|A_{st}| \le  2n - |D_{st}| - |B_{st}| \le 2n - 2|D_{st}| + O(n^{4/5})$.
Thus, we have $-2n\le |A_{st}|-2n+2|D_{st}|\le O(n^{4/5})$ and $2n\ge |D_{st}|-\frac{|A_{st}|}{2}\ge 2|D_{st}|-n-O(n^{4/5})\ge -O(n^{4/5})$.
It follows that
$$n|A_{st}|-\frac{|A_{st}|^2}{2}-\left(2n|D_{st}| -2|D_{st}|^2\right)= (|A_{st}|-2n+2|D_{st}|)\left(|D_{st}|-\frac{|A_{st}|}{2}\right)\le O(n^{9/5}).$$
Then inequality \eqref{Ast} gives
\begin{equation}\label{eq:Ast-bound}
\sum_{v\in A_{st}} d_G(v) \le 2n|D_{st}| -2|D_{st}|^2 + O_\ell(n^{9/5}).
\end{equation}
Summing \eqref{eq:small-parts}, \eqref{Dstd}, and \eqref{eq:Ast-bound}, we obtain
\begin{align*}
n^2\le2|E(G)|\le n|D_1|-\frac{|D_1|^2}{2} +|D_2||D_{st}| + 2n|D_{st}|-2|D_{st}|^2  + O_\ell(n^{9/5}).
\end{align*}
Since \(|D_2|=|D_{st}|-|D_1|\), the above inequality is equivalent to
\[ 
\left(n-|D_{st}|-\frac{|D_1|}{2}\right)^2+\frac{|D_1|^2}{4} \le O_\ell(n^{9/5}). 
\] 
Thus, $|n-|D_{st}||=O_\ell(n^{9/10})$ and $|D_1|=O_\ell(n^{9/10})$. 
Then \eqref{Ast} and \eqref{D1d} yield $\sum_{v\in D_1}d_G(v)= O_\ell(n^{19/10})$ and $\sum_{v\in A_{st}}d_G(v)= O_\ell(n^{19/10})$.
Together with \eqref{eq:small-parts}, we deduce that
\begin{equation*}
\sum_{v\in D_2}d_G(v)=2|E(G)|-\sum_{v\in D_1}d_G(v)-\sum_{v\in A_{st}}d_G(v)-\sum_{v\in Y_{st}\cup B_{st}\cup\{s,t\}}d_G(v) \ge n^2-O_\ell(n^{19/10}). 
\end{equation*} 
By \eqref{eq:small-parts} and the fact that each vertex in \(D_2\) has at most two neighbors in \(A_{st}\), we get 
\begin{equation}\label{D2ND}
\sum_{v\in D_2} |N_G(v)\cap D_{st}|\ge \left(\sum_{v\in D_2}d_G(v)\right) - \left(\sum_{v\in Y_{st}\cup B_{st}\cup\{s,t\}}d_G(v)\right) -2|D_2|\ge n^2-O_\ell(n^{19/10}). 
\end{equation} 

Note that $|n-|D_{st}||=O_\ell(n^{9/10})$ and $|D_1|=O_\ell(n^{9/10})$ imply $|n-|D_{2}||=O_\ell(n^{9/10})$.
Hence, \eqref{D2ND} yields
\[
\begin{aligned}
\sum_{v\in D_2}\bigl(|D_{2}|-|N_G(v)\cap D_{2}|\bigr)\le |D_2|^2-\sum_{v\in D_2}\left(|N_G(v)\cap D_{st}|-|D_1|\right) \le O_\ell(n^{19/10}).
\end{aligned}
\]
Consequently, the number of vertices $v$ in $D_2$ satisfying
$|N_G(v)\cap D_{2}|<|D_{2}|-n^{19/20}$
is at most $O_\ell(n^{19/20}).$ That is, there exists $D_2'\subseteq D_2$ with \(|D_2'|=O_\ell (n^{19/20}) \) such that any vertex $v$ in $D_2\backslash D_2'$ satisfies
$|N_G(v)\cap D_{2}|\ge |D_{2}|-n^{19/20}$.
We claim that all vertices in $D_2\backslash D_2'$ have pairwise distinct degrees. Indeed, let $u,v$ be two vertices in $D_2\backslash D_2'$. By the definition of $D_2'$, $G[D_2\backslash D_2']$ has minimum degree at least $|D_{2}|-n^{19/20}-|D_{2}'|\ge n-O_\ell (n^{19/20})$. Thus, $G[D_2\backslash D_2']$ contains a path $uz_1\cdots z_{2\ell-2}$ of length $2\ell-2$ starting at $u$ and avoiding \(v\).
Since both $d_{G[D_2\backslash D_2']}(z_{2\ell-2})$ and $d_{G[D_2\backslash D_2']}(v)$ are at least $n-O_\ell (n^{19/20})\ge \frac{|D_2|}{2}+2\ell$, $z_{2\ell-2}$ and $v$ have at least $4\ell$ common neighbors. Let $w$ be a common neighbor of $z_{2\ell-2},v$ with $w\notin \backslash \{u,z_1,\cdots, z_{2\ell-3}\}$. Then $uz_1\cdots z_{2\ell-2}wv$ is a path of length $2\ell$, giving $d(u)\ne d(v)$, as claimed.

Set $D_2'':=\{v\in D_2:|N_G(v)\backslash D_{st}|\ge n^{19/20}\}$.
Then, by \eqref{eq:small-parts} and the definition of $D_2$, we have
$$|D_2''|n^{19/20}\le \sum_{v\in D_2''} |N_G(v)\backslash D_{st}| \le 2|D_2''|+ \sum_{v\in Y_{st}\cup B_{st}\cup\{s,t\}}d_G(v)= O_\ell (n),$$
which gives $|D_2''|= O_\ell (n^{1/20})$. By the definitions of $D_2'$ and $D_2''$, each vertex in $D_2\backslash (D_2'\cup D_2'')$ has degree at least $|D_{2}|-n^{19/20}\ge n-O_\ell (n^{19/20})$ and at most $|D_{st}|+n^{19/20} \le n+O_\ell (n^{19/20})$.
However, since \(|D_2'|=O_\ell (n^{19/20}) \) and $|D_2''|= O_\ell (n^{1/20})$, we have $|D_2\backslash (D_2'\cup D_2'')|\ge n-O_\ell (n^{19/20})$. Thus, $D_2\backslash (D_2'\cup D_2'')$ must contain two vertices of equal degree, which contradicts the previous claim that all vertices in $D_2\backslash D_2'$ have pairwise distinct degrees.
\end{proof}

\subsection{Estimate of \texorpdfstring{$\beta$}{beta}}

Let $\beta$ denote the maximum integer for which there exist two vertices of degree $\beta$. 

\begin{lem}\label{lem:beta-large}
We have $\beta\ge n-300\ell\sqrt n.$
\end{lem}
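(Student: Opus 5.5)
We argue by contradiction: assume $\beta<n-300\ell\sqrt n$, so that every degree value $d>\beta$ is taken by at most one vertex. Our goal is to apply Lemma~\ref{lem3.2} to the degree multiset of $G$, with $\alpha:=\beta$ (or $\alpha:=\max\{\beta,\lfloor 2n/5\rfloor+1\}$ if $\beta$ is small). Condition~(a) of Lemma~\ref{lem3.2} is then immediate from the maximality of $\beta$. Two preliminary adjustments are needed, since Lemma~\ref{lem3.2} wants every element to be at most $n$. First, by Lemma~\ref{lem:R-minus-small} fewer than $3\sqrt n$ vertices have degree above $n$. These have distinct degrees (except possibly ties in $(n,n+\ell]$, already handled there), and by Lemma~\ref{r2n} together with the argument of Lemma~\ref{lem:U} their excess over $n$ sums to $O(n)$; indeed each $d(v_j)-n-\ell\le |U_{v_j}|$ and the $U_{v_j}$ are disjoint. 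Truncating these degrees at $n$ therefore costs only $O_\ell(n)$. Second, if $\beta\le 2n/5$, then every degree above $2n/5$ occurs at most once, and a direct count bounds the degree sum by roughly $\sum_{d>2n/5}d+ (2n-3n/5)\cdot 2n/5<n^2$, which is easy. The main case is therefore $2n/5<\beta<n-300\ell\sqrt n$.

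The heart of the proof is verifying condition~(b): for $j\in\{1,2,3,4\}$, no degree value $d\ge 2(n-\beta)/j$ is shared by $j+2$ vertices. Let $s,t$ be the two vertices of degree $\beta$. Since $\beta\ge 2n/5\ge n-n^{4/5}$ fails in general, Lemma~\ref{B=kong} need not apply directly. However, if $\beta\ge n-n^{4/5}$ we obtain $B_{st}=\emptyset$, and then $|D_{st}|=2n-2\beta-2+2I_{st}\approx 2(n-\beta)$. Otherwise I would use Lemma~\ref{lem}(b) to say $B_{st}$ is small, giving $|D_{st}|\lesssim 2(n-\beta)+|B_{st}|$.

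Now suppose $j+2$ vertices $w_1,\dots,w_{j+2}$ share a degree $d\ge 2(n-\beta)/j$. Each $w_i$ has at least $d$ neighbours, and since $|D_{st}|$ is small relative to $jd$, pigeonhole forces many of these neighbours into $A_s\cup A_t$. The plan is to show that two of the $w_i$ have neighbours $x,y$ in $A_{st}$ that can be linked by a path of length $2\ell-2$. For $x,y$ both in $A_s\setminus A_{s,4}$ this path comes from Lemma~\ref{lem}(a) followed by the step through $s$; for $x\in A_s$ and $y\in A_t$ one must route through $D_{st}$ or through a core edge between $A_s$ and $A_t$. This would yield a $2\ell$-path $w_a x\cdots y w_b$, a contradiction. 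Vertices with many neighbours in $Y_{st}\cup B_{st}$ are few, by Lemma~\ref{lem}(b),(c), and can be discarded; this discarded set is what costs the $\sqrt n$ slack.

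Combining the two conditions, Lemma~\ref{lem3.2} gives $2|E(G)|\le n^2-299\ell n^{3/2}+O_\ell(n)<n^2+n$, contradicting $|E(G)|\ge (n^2+n)/2$. I expect the main obstacle to be condition~(b), specifically the parity bookkeeping when $x\in A_s$ and $y\in A_t$ with $B_{st}=\emptyset$. There one must either find an $A_s$--$A_t$ connection of the right parity through the cores $A_{s,2},A_{t,2}$ or through $D_{st}$, or else argue that the $w_i$ cannot split their neighbourhoods between the two sides. I would try first to show that the $j+2$ vertices produce, by pigeonhole, two whose usable neighbours lie on the same side, which reduces everything to Lemma~\ref{lem}(a),(d).
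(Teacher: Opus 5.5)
\textbf{Gap: the proposal never controls common neighbourhoods of equal-degree vertices, so condition~(b) of Lemma~\ref{lem3.2} is not established.} Your overall architecture matches the paper. You reduce to Lemma~\ref{lem3.2}, dispose of the case $d(s)\le 2n/5$ directly, and discard vertices with many neighbours in $Y_{st}$. Your fallback for~(b) is also the paper's: among $j+2$ equal-degree vertices, at most one has three neighbours in $A_s\setminus A_{s,4}$, and likewise for $A_t\setminus A_{t,4}$. The failure comes in two places.

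\emph{First}, Lemma~\ref{lem}(b) does not make $B_{st}$ small for $\ell\ge 3$. It only gives $\sum_{v\in B_{st}}d(v)\le(6\ell-2)n$, and $B_{st}$ may have linear size with low-degree vertices. Since $|D_{st}|=|B_{st}|+2n-2d(s)+2I_{st}-2$, you then have no bound of the form $|D_{st}|+|B_{st}|\le 2(n-\beta)+O(\sqrt n)$. Without it, $2(n-\alpha)/j$ is not the correct threshold.

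\emph{Second}, even if $|D_{st}|\approx 2(n-\beta)$, your step ``$|D_{st}|$ is small relative to $jd$, so pigeonhole forces neighbours into $A_s\cup A_t$'' fails for $j\ge 2$. After removing the at most two vertices with many neighbours in $A_{st}$, the remaining $j$ vertices need only $d\ge 2(n-\beta)/j$ neighbours each. For $j=2$, two vertices of degree about $n-\beta\le |D_{st}|$ can have identical neighbourhoods inside $D_{st}$. A single equal-degree pair with a huge common neighbourhood is not contradictory (think of the two centres of $K_{2,m}$). So no path-linking argument of the kind you sketch can exclude this.

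\textbf{How the paper closes the gap.}
\begin{itemize}
\item It greedily removes disjoint pairs $x,y$ with $d(x)=d(y)$ and $|N(x)\cap N(y)|\ge 10\ell\sqrt n$.
\item Suppose $\lceil\sqrt n\rceil$ such pairs existed. Take the auxiliary bipartite graph with the pairs on one side and common neighbours on the other. It has more than $(10\ell-3)n$ edges, so Theorem~\ref{bipartite-C2k} yields a cycle $p_1z_1p_2z_2\cdots p_\ell z_\ell p_1$. This cycle gives the path $x_1z_1x_2z_2\cdots x_\ell z_\ell y_1$ of length $2\ell$, whose endpoints have equal degree, a contradiction.
\item Hence an exceptional set $S$ with $|S|<2\sqrt n$ suffices, and its degree sum is $O(n^{3/2})$. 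Outside $S$, any two equal-degree vertices have fewer than $10\ell\sqrt n$ common neighbours.
\item You must therefore take $s,t$ to be the equal-degree pair of maximum degree in $V(G)\setminus S$, not the global $\beta$-pair, which may lie in $S$. This choice gives $|B_{st}|<10\ell\sqrt n$.
\item Inclusion--exclusion over the remaining $j$ vertices then shows their neighbourhoods cannot fit into $D_{st}\cup B_{st}\cup\{s,t\}$.
\item One applies Lemma~\ref{lem3.2} with $\alpha=d(s)$, shifting all degrees down by $\lceil 60\ell\sqrt n\rceil$ to absorb the additive error terms. This gives $d(s)\ge n-300\ell\sqrt n$, and finally $\beta\ge d(s)$.
\end{itemize}
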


\begin{proof}
We first choose a set $S$ greedily.  As long as there are two distinct vertices
$x,y\in V(G)\setminus S$ such that $d_G(x)=d_G(y)$ and $|N_G(x)\cap N_G(y)|\ge 10\ell\sqrt n$, we put $x,y$ into $S$. We claim that fewer than \kern-1mm $\sqrt n$ pairs are chosen.  Otherwise, we can choose $q :=\lceil\sqrt n\rceil$ such pairs, say $\{ x_1,y_1\},\ldots,\{x_q,y_q\}.$ Construct a bipartite graph $H$ whose first part is $\{p_1,\ldots,p_q\}$, where $p_i$ represents the pair $\{x_i,y_i\}$, and whose second part is $V(G)\setminus \{ x_1,y_1, \ldots ,x_q,y_q\}.$ Join $p_i$ to $z$ if $z\in N_G(x_i)\cap N_G(y_i).$ Then $d_H(p_i)\ge 10\ell\sqrt n-2q$ for every $i$, and hence
\[
        e(H)\ge q(10\ell\sqrt n-2q)>(10\ell-3)n.
\]
On the other hand, Theorem~\ref{bipartite-C2k} gives $\operatorname{ex}(q,2n-2q,C_{2\ell})<5\ell n$ for sufficiently large $n$.  Thus, \(H\) contains a cycle of length \(2\ell\), say
$p_1 z_1 p_2 z_2 \cdots p_{\ell} z_\ell p_1$.
By the construction of $H$, $x_1 z_1 x_2 z_2 \cdots x_{\ell} z_\ell y_1$ is a path of length \(2\ell\) in \(G\) with equal-degree endpoints, which yields a contradiction. Therefore,
\begin{equation}\label{eq:S-small}
|S| < 2\sqrt{n}.
\end{equation}
By the maximality of the greedy process, for any $x,y\in V(G)\setminus S$ with $d_G(x)=d_G(y)$, we have
\begin{equation}\label{eq:common-small}
        |N_G(x)\cap N_G(y)|<10\ell\sqrt n .
\end{equation}

By Lemmas \ref{lem:R-minus-small}, the degree of at least $|V(G)\setminus S|-3\sqrt{n}$ vertices in $V(G)\setminus S$ belongs to $\{0,1,\dots,n\}$. By \eqref{eq:S-small}, the set $V(G)\setminus S$ contains two vertices of equal degree. Choose a pair $\{s,t\}\subseteq V(G)\setminus S$ with $d_G(s)=d_G(t)$ such that $d_G(s)$ is as large as possible. For simplicity, we set $A_s:=A_s^{(st)}$ and $A_t:=A_t^{(st)}$. By \eqref{eq:common-small},
\begin{equation}\label{eq:B-small}
        |B_{st}|<10\ell\sqrt n .
\end{equation}

Define $T:=\{x\in (A_s\cup A_t\cup D_{st})\setminus Y_{st}: e(\{ x\},Y_{st})>20\ell\sqrt n\}$.
Then by Lemma \ref{lem} (c),
\begin{equation}\label{eq:T-small}        
|T|<3\sqrt{n}.
\end{equation}
Put $Q:=(A_s\cup A_t\cup D_{st})\setminus (Y_{st}\cup T\cup S)$.
We next prove the key multiplicity estimate. 

\begin{claim}\label{eq:key-threshold}
    For each integer $3\le k\le 6$, there do not exist $k$ vertices in $Q$ with the same degree such that the degree is at least $\frac{|D_{st}|+|B_{st}|}{k-2} + 40\ell\sqrt{n}$.
\end{claim}
\begin{proof}
Suppose, for a contradiction, that such $k$ vertices exist. We first observe that among these $k$ vertices, at most one has at least three neighbors in $A_s\setminus A_{s,4}$. Otherwise, choose distinct $z,z'$ among these $k$ vertices with $e(\{z\},A_s\setminus A_{s,4})\ge 3$ and $e(\{z'\},A_s\setminus A_{s,4})\ge 3$. We may pick $x\in (N_G(z)\cap (A_s\setminus A_{s,4}))\setminus\{z'\}$ and $y\in (N_G(z')\cap (A_s\setminus A_{s,4}))\setminus\{z,x\}$. Applying Lemma~\ref{lem}(a) with $F=\{z,z',s,y\}$ yields a path $P$ of length $2\ell-4$ from $x$ to some vertex in $A_s$ that avoids $F$. Then $zxPsyz'$ is a path of length $2\ell$ with equal-degree endpoints, a contradiction. By symmetry, among these $k$ vertices, at most one has at least three neighbors in $A_t\setminus A_{t,4}$.

Consequently, there exists a set $Z$ of $k-2$ vertices among these $k$ vertices such that $e\bigl(\{z\},(A_s\cup A_t)\setminus Y_{st}\bigr)\le 4$ for all $z\in Z$. Since $z\notin T$, we also have $e(\{z\},Y_{st})\le 20\ell\sqrt{n}$. Together with the contradiction hypothesis, every $z\in Z$ satisfies
\begin{equation}\label{azDst}
    e(\{z\},D_{st}\cup B_{st}\cup\{s,t\}) \ge d_G(z) - 20\ell\sqrt{n} - 4 \ge \frac{|D_{st}|+|B_{st}|}{k-2} + 20\ell\sqrt{n} - 4.
\end{equation}

If $k=3$, then $|Z|=1$, and the lower bound above is greater than $|D_{st}\cup B_{st}\cup\{s,t\}|$, which is impossible. Thus $k\ge 4$, and so $2\le |Z|\le 4$. 
Since $S\cap Q=\emptyset$, \eqref{eq:common-small} shows that any two vertices in $Z$ have fewer than $10\ell \sqrt{n}$ common neighbors.
Thus, by the inclusion–exclusion principle and \eqref{azDst}, we derive
\begin{align*}
    |D_{st}\cup B_{st}\cup\{s,t\}|&\ge \sum_{z\in Z} e(\{z\},D_{st}\cup B_{st}\cup\{s,t\}) - \sum_{\{z,z'\}\subseteq Z,\ z\ne z'} \bigl|N_G(z)\cap N_G(z')\cap (D_{st}\cup B_{st}\cup\{s,t\})\bigr|\\
    &\ge |Z|\left(\frac{|D_{st}|+|B_{st}|}{k-2} + 20\ell\sqrt{n} - 4\right) - \binom{|Z|}{2}\times 10\ell \sqrt{n}\\
    &= |D_{st}|+|B_{st}|+|Z|(20\ell\sqrt{n} - 4-5\ell\sqrt{n}(|Z|-1)).
\end{align*}
As $2\le |Z|\le 4$, the above expression is larger than $|D_{st}|+|B_{st}|+2$, which is impossible. 
\end{proof}

Now let $P:=Q\setminus \{x\in Q:d_G(x)>n\}$.
Then Lemma~\ref{lem:R-minus-small} gives $|Q\setminus P|<3\sqrt{n}$. By \eqref{eq:S-small}, \eqref{eq:T-small}, and Lemma~\ref{lem}(b) and (c), we have
\begin{equation}\label{eq:bad-sum}     
\sum_{x\in V(G)\setminus P}d_G(x)        
\le 2n(|S|+|T|+|Q\setminus P|)+\left(\sum_{x\in Y_{st}} d_G(x)\right)+\left(\sum_{x\in B_{st}}d_G(x)\right)+d_G(s)+d_G(t)
\le 90\ell n^{3/2}.
\end{equation}

We next show that $d_G(s)=d_G(t)>\frac{2n}{5}$.
Suppose otherwise. By the choice of $s$ and $t$, for any integer $d>\frac{2n}{5}$, there is at most one vertex in $V(G)\setminus S$ of degree $d$. Hence, each such degree appears at most once in $P$. Since all vertices in $P$ have degree at most $n$, we obtain
\[
\sum_{x\in P}d_G(x)\le \sum_{i=\left\lfloor\frac{2n}{5}\right\rfloor+1}^{n}i+\left(n+\left\lfloor\frac{2n}{5}\right\rfloor\right)\left\lfloor\frac{2n}{5}\right\rfloor <0.99n^2.
\]
Together with \eqref{eq:bad-sum}, this gives $2|E(G)|<n^2+n$, a contradiction. Hence, $d_G(s)=d_G(t)>\frac{2n}{5}$ holds.

By \eqref{eq:B-small} and the fact that $|D_{st}|=|B_{st}|+2n-2d_G(s)+2I_{st}-2$, we find
\begin{equation}\label{eq:L-upper}
|D_{st}|+|B_{st}|\le 2(n-d_G(s))+20\ell\sqrt{n}.
\end{equation}

Next, we prove $d_G(s)\ge n-300\ell\sqrt{n}$. Assume otherwise.
By the choice of $s$ and $t$, for any integer $d>d_G(s)$, there is at most one vertex in $P$ of degree $d$, as $P\subseteq V(G)\setminus S$. Combining Claim \ref{eq:key-threshold} with inequality \eqref{eq:L-upper}, we deduce that for each $j \in \{1, 2, 3, 4\}$ and each integer $d$ satisfying
\[
d \ge \frac{2(n-d_G(s))}{j} +\frac{20\ell\sqrt{n}}{j} +40\ell\sqrt{n},
\]
there are at most $j+1$ vertices of degree $d$ in $P$.
Let $P'$ be the multiset consisting of the integers $d(w)-\lceil 60\ell\sqrt{n}\rceil$ for each $w\in P$, together with $2n-|P|$ copies of $0$. Then $P'$ has exactly $2n$ elements. Moreover, for each $j \in \{1, 2, 3, 4\}$ and each integer $d\ge 2(n-d_G(s))/j$, there are at most $j+1$ elements equal to $d$ in $P'$. Recall that $\frac{2n}{5}<d_G(s)<n-300\ell\sqrt{n}$. Therefore, applying Lemma \ref{lem3.2} with $S=P'$ and $\alpha =d_G(s)$, we deduce that the sum of all elements in $P'$ is at most $n^2 - 299\ell n^{3/2}$.
By the definition of $P'$, we obtain
\[
\sum_{w\in P} d_G(w) = \lceil 60\ell \sqrt{n}\rceil \cdot |P| + \sum_{d\in P'} d \le 121\ell n^{3/2} + n^2 - 299\ell n^{3/2} = n^2 - 178\ell n^{3/2}.
\]
It follows from \eqref{eq:bad-sum} that,
\[
2|E(G)| = \sum_{w \in V(G)\setminus P} d(w) + \sum_{w \in P} d(w)
\le 90\ell n^{3/2} + n^2 - 178\ell n^{3/2} < n^2 + n,
\]
which contradicts the assumption that $|E(G)| \ge (n^2 + n)/2$.
Thus, $d_G(s)\ge n-300\ell\sqrt{n}$.
Since $d_G(s)= d_G(t)$, the definition of $\beta$ gives $\beta\ge d_G(s)\ge n-300\ell\sqrt n.$ This completes the proof.
\end{proof}

Let $u$ and $v$ be two vertices in $G$ with $d(u)=d(v)=\beta$. We fix this pair $(u,v)$, and define $I_{uv}=1$ if $uv\in E(G)$ and $I_{uv}=0$ otherwise. For brevity, we write $A_u := A_u^{(uv)}$, $A_v := A_v^{(uv)}$, $B := B_{uv}$, $D := D_{uv}$, $Y := Y_{uv}$, $A := A_{uv}$, $A_{u,i}:=A_{u,i}^{(uv)}$, and $A_{v,i}:=A_{v,i}^{(uv)}$, for each $i\in \{1,2,3,4 \}$.

\begin{lem}\label{lem:uv-structure}
The following statements hold.
\begin{enumerate}[label=(\alph*)]
\item We have $B=\emptyset$, $\beta\le n$, and $|D|=2n-2\beta+2I_{uv}-2.$
\item No vertex has a neighbor in \(A_u\) and a neighbor in \(A_v\).
\item $\Delta(G)\le 2n-\beta$, where $\Delta(G)$ denotes the maximum degree of $G$.
\item \(u\) and \(v\) are the only two vertices of degree \(\beta\) in $G$.
\end{enumerate}
\end{lem}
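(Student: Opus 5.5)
The plan is to take the four parts in order, using the fact that $\beta$ is very close to $n$ (Lemma~\ref{lem:beta-large}) together with Lemma~\ref{B=kong}. Part~(b) is the main difficulty.

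\textbf{Part (a).} Since $\beta\ge n-300\ell\sqrt n\ge n-n^{4/5}$ for large $n$, Lemma~\ref{B=kong} applied to $s=u$, $t=v$ gives $B=\emptyset$. Next count vertices. The sets $N(u)\setminus\{v\}$ and $N(v)\setminus\{u\}$ lie in $V(G)\setminus\{u,v\}$, which has $2n-2$ elements. Hence
\[
|B|\ge 2(\beta-I_{uv})-(2n-2).
\]
If $\beta>n$ this is positive, which contradicts $B=\emptyset$; so $\beta\le n$. Finally, $|A_u|=|A_v|=\beta-I_{uv}$ because $B=\emptyset$. Then $|D|=2n-2-|A_u|-|A_v|$ gives the stated formula.

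\textbf{Part (b).} Suppose a vertex $w$ has neighbours $x\in A_u$ and $y\in A_v$. Then $w\notin\{u,v\}$, and $uxwyv$ is a path of length $4$, which settles $\ell=2$. For $\ell\ge3$ I would lengthen it on the $u$-side. If $x\in A_u\setminus A_{u,4}$, apply Lemma~\ref{lem}(a) with $F=\{u,v,w,y\}$ and $k=2\ell-4$. This gives a path $P$ from $x$ to some $x'\in A_u$, and $ux'P^{-1}xwyv$ has length $2\ell$ with equal-degree endpoints, a contradiction. The case $y\in A_v\setminus A_{v,4}$ is symmetric.

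So the main obstacle is the case $x\in A_{u,4}$ and $y\in A_{v,4}$. Here I would first try to replace $x$ or $y$ by another neighbour of $w$ lying outside $Y$. Failing that, I would route from $u$ through the core of $G[A_u]$ or $G[A_u,A_v]$ back to $x$ via a neighbour of $x$. This needs a preliminary structural step: by Lemma~\ref{lem}(c), $Y$ carries only $O(\ell n)$ total degree. Combined with the edge count $|E(G)|\ge (n^2+n)/2$, this should force almost all of $A_u\cup A_v$ into the cores. I expect the degenerate configurations, where $x$ and $y$ have very small degree, to need a separate degree-counting argument against the edge bound.

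\textbf{Part (c).} Let $z$ have $d(z)>2n-\beta$. Outside $A_u\cup A_v$ there are only $|D|+2=2n-2\beta+2I_{uv}$ vertices, including $u$ and $v$. Hence $z$ has more than roughly $\beta-2I_{uv}$ neighbours in $A_u\cup A_v$. Since $|A_u|=|A_v|=\beta-I_{uv}$, after careful bookkeeping of $I_{uv}$ and of whether $z$ itself lies in $A_u\cup A_v$, this forces $z$ to have neighbours in both $A_u$ and $A_v$. That contradicts~(b).

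\textbf{Part (d).} Suppose a third vertex $z$ has $d(z)=\beta$. Lemma~\ref{B=kong} applies to the pairs $(u,z)$ and $(v,z)$, so $N(z)\cap N(u)=N(z)\cap N(v)=\emptyset$. Therefore
\[
N(z)\subseteq D\cup\{u,v\},
\]
and so $\beta=d(z)\le 2n-2\beta+2I_{uv}$. This is impossible, since $\beta\ge n-300\ell\sqrt n$ makes $2n-2\beta+2I_{uv}$ of order $\sqrt n$.
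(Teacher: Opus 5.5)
Your parts (a), (c) and (d) are correct and essentially follow the paper's arguments. The first case of (b) is also correct: if $w$ has a neighbour $x\in A_u\setminus A_{u,4}$ (or symmetrically $y\in A_v\setminus A_{v,4}$), then $ux'P^{-1}xwyv$ is a valid $2\ell$-path between $u$ and $v$.

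The gap is the remaining case $N(w)\cap A_u\subseteq A_{u,4}$ and $N(w)\cap A_v\subseteq A_{v,4}$, where you offer only a list of hopes. No path through $w$ can work there. Suppose $x$, $w$, $y$ have neighbourhoods $\{u,w\}$, $\{x,y\}$, $\{w,v\}$. Then every $u$--$v$ path through $w$ is exactly $uxwyv$, of length $4<2\ell$, so there is no rerouting through the cores or through other neighbours of $x$. Showing that most of $A_u\cup A_v$ lies in the cores does not help, because the particular vertices attached to $w$ may still lie in $Y$. So the contradiction in this case has to be global, and you do not say what it is.

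The missing idea (for $\ell\ge 3$) is to use $uxwyv$ not as the forbidden path but as a length-$4$ connector between the two sides. This mirrors how a vertex of $B_{st}$ gives a length-$2$ connector in Lemma~\ref{lem}(e).
\begin{itemize}
\item Take $x'\in(A_u\setminus A_{u,4})\setminus\{w,x\}$ and $y'\in(A_v\setminus A_{v,4})\setminus\{w,y\}$. Lemma~\ref{lem}(a) with $F=\{u,v,w,x,y,y'\}$ and $k=2\ell-6$ gives a path from $x'$ to some vertex of $A_u$.
\item Appending $u\,x\,w\,y\,v\,y'$ gives a $2\ell$-path, so $d(x')\ne d(y')$.
\item Combined with Lemma~\ref{lem}(d), all of $A\setminus\{w,x,y\}$ has pairwise distinct degrees. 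Lemma~\ref{eq:distinct-degree-sum} then gives $\sum_{z\in A}d(z)\le n^2/2+O(n^{3/2})$.
\item By (a) and Lemma~\ref{lem:beta-large}, $|D|\le 600\ell\sqrt n$, and Lemma~\ref{lem}(c) gives $\sum_{z\in Y}d(z)\le 60\ell n$.
\item Together these give $2|E(G)|\le n^2/2+O(n^{3/2})<n^2+n$, a contradiction.
\end{itemize}
This handles every such $w$ at once, whether or not its neighbours lie in $Y$, so your case split is unnecessary. Your (c) goes through once (b) is established this way.
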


\begin{proof}
(a) By Lemma~\ref{lem:beta-large}, $\beta\ge n-n^{4/5}$ for sufficiently large $n$. Hence, Lemma~\ref{B=kong} yields $B=\emptyset$. 
So, $|D|=2n-2\beta+2I_{uv}-2.$
Moreover, since $2\beta= |A_u|+|A_v|+2I_{uv}\le |V(G)|$, we have $\beta\le n$.

(b) Suppose, for a contradiction, that there exists a vertex $w$ with neighbors $w_1\in N_G(w)\cap A_u$ and $w_2\in N_G(w)\cap A_v$. If $\ell=2$, then $u w_1 w w_2 v$ is a path of length 4 with endpoints of degree $\beta$, a contradiction. Hence $\ell\ge 3$. 

We claim that $d_G(x)\ne d_G(y)$ for all $x\in (A_u\setminus A_{u,4})\setminus\{w,w_1\}$ and $y\in (A_v\setminus A_{v,4})\setminus\{w,w_2\}$. Applying Lemma~\ref{lem}(a) with $F=\{ u,v,w,w_1,w_2,y \}$ and $k=2\ell-6$, we obtain a path $xx_1\cdots x_{2\ell-6}$ avoiding $F$ such that $x_{2\ell-6}\in A_u$. Then
$xx_1\cdots x_{2\ell-6}u w_1 w w_2 v y$
is a $2\ell$-path, giving $d_G(x)\ne d_G(y)$ as claimed.

This, combined with Lemma~\ref{lem}(d), shows that all vertices in $A\setminus\{w,w_1,w_2\}$ have pairwise distinct degrees. Applying Lemma~\ref{eq:distinct-degree-sum} to this set and accounting for the at most three excluded vertices, we get
\[
\sum_{z\in A}d_G(z)
\le \frac{n^2}{2}+O(n^{3/2}).
\]
Moreover, by (a) and Lemma~\ref{lem:beta-large}, $|D| = 2n-2\beta+2I_{uv}-2 \le 600\ell\sqrt{n}$. It follows by Lemma~\ref{lem}(c) that
\begin{align*}
    2|E(G)|&=d_G(u)+d_G(v)+\sum_{z\in A}d_G(z)+\sum_{z\in Y}d_G(z) +\sum_{z\in D}d_G(z) \\
    &\le 4n+\frac{n^2}{2} + O(n^{3/2})+60\ell n+ 2n\times 600\ell\sqrt{n} < n^2+n,
\end{align*}
which is a contradiction.

(c) Let $x$ be a vertex of maximum degree in $G$. 
By (b), without loss of generality, we may assume $N_G(x)\cap A_u=\emptyset$. If $x\notin A_u$, then $x$ is not adjacent to every vertex in $A_u\cup \{x \}$. If $x\in A_u$, then $x$ is not adjacent to all vertices in $A_u\cup\{v\}$. In either case, $x$ has at least $|A_u|+1\ge \beta$ non-neighbors, and hence $d_G(x) \le 2n-\beta$. 

(d) Suppose, to the contrary, that there exists $w\in V(G)\setminus\{u,v\}$ with $d_G(w)=\beta$. 
By Lemma~\ref{lem:beta-large}, we have $\beta\ge n-n^{4/5}> 2n/3$ for sufficiently large $n$. Applying Lemma~\ref{B=kong} to pairs $(u,v)$, $(u,w)$ and $(v,w)$ yields
$B_{uv} = B_{uw} = B_{vw} = \emptyset$.
So, $2n \ge \bigl|N_G(u)\cup N_G(v)\cup N_G(w)\bigr| = d_G(u)+d_G(v)+d_G(w) = 3\beta$, contradicting $\beta>2n/3$.
\end{proof}

Let  $m_d:=\bigl|\{x\in V(G):d_G(x)=d\}\bigr|$ for each integer $d$.

\begin{lem}\label{cl:excess3}
We have
\[
\sum_{\substack{d\ge 4n-4\beta +12 \\ m_d\ge 3}} (m_d-2)d
\le 120\ell n.
\]
\end{lem}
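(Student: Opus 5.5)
The plan is to rerun the argument from the proof of Claim~\ref{eq:key-threshold}, now for the fixed pair $(u,v)$. The new point is that Lemma~\ref{lem:uv-structure}(a) gives $B=\emptyset$ and $|D|\le 2n-2\beta$. Consequently every vertex of large degree that has few neighbours in $A$ must send most of its edges into $Y$, and Lemma~\ref{lem}(c) controls how many such edges there can be.

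\emph{Step 1.} Fix an integer $d\ge 4n-4\beta+12$ with $m_d\ge 3$. Then $d\ne\beta$, because $m_\beta=2$ by Lemma~\ref{lem:uv-structure}(d), so none of the vertices of degree $d$ is $u$ or $v$. Exactly as in the first paragraph of the proof of Claim~\ref{eq:key-threshold}, at most one vertex of degree $d$ has at least three neighbours in $A_u\setminus A_{u,4}$. Indeed, if $z\neq z'$ both did, we would pick distinct $x\in N(z)\cap(A_u\setminus A_{u,4})$ and $y\in N(z')\cap(A_u\setminus A_{u,4})$ avoiding $z,z'$. Lemma~\ref{lem}(a) with $F=\{z,z',u,y\}$ and $k=2\ell-4$ then gives a path $P$ from $x$ into $A_u$, and $zxPuyz'$ is a $2\ell$-path with equal-degree endpoints. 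The same holds for $A_v\setminus A_{v,4}$. Hence there is a set $Z_d$ of $m_d-2$ vertices of degree $d$, each having at most $4$ neighbours in $A$.

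\emph{Step 2.} We bound the degree of each $z\in Z_d\setminus Y$. Since $V(G)=A\cup Y\cup D\cup\{u,v\}$ (using $B=\emptyset$), we have
\[
d\le 4+e(\{z\},Y)+|D|+2\le e(\{z\},Y)+2n-2\beta+6 .
\]
Because $d\ge 4n-4\beta+12$, we get $2n-2\beta+6\le d/2$, and therefore $d\le 2e(\{z\},Y)$.

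\emph{Step 3.} The sets $Z_d$ are disjoint for different $d$, so
\[
\sum_{\substack{d\ge 4n-4\beta+12\\ m_d\ge 3}}(m_d-2)d=\sum_d\sum_{z\in Z_d}d_G(z)\le \sum_{z\in Y}d_G(z)+2\,e(Y,V(G)\setminus Y),
\]
where the vertices of $Z_d$ lying in $Y$ are bounded trivially by their degree and the others by Step~2. The right-hand side is at most $2\sum_{z\in Y}d_G(z)$, since $e(Y,V\setminus Y)\le\sum_{z\in Y}d_G(z)$ and the remaining term $\sum_{z\in Y}d_G(z)$ is counted only once. Lemma~\ref{lem}(c) gives $\sum_{z\in Y}d_G(z)\le 60\ell n$, so the total is at most $120\ell n$.

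The main obstacle is bookkeeping rather than ideas. The naive bound would give $180\ell n$, so one must combine the $Y$-internal and $Y$-external contributions as above. One must also check that the path construction in Step~1 is legal: that $x,y$ can be chosen distinct from $z,z'$, which follows from having three neighbours, and that Lemma~\ref{lem}(a) applies with $|F|\le 6$.
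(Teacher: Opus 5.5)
Your Steps 1 and 2 are correct and agree in substance with the paper. The paper phrases Step 1 as ``no three vertices of degree $d$ can each have at most $d/2$ neighbours in $Y$'', using pigeonhole on four neighbours in $A_u\setminus A_{u,4}$ or $A_v\setminus A_{v,4}$; this is equivalent to your version.

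The gap is the last inequality of Step 3. Since $\sum_{z\in Y}d_G(z)=2e(Y)+e(Y,V(G)\setminus Y)$, your bound $\sum_{z\in Y}d_G(z)+2e(Y,V(G)\setminus Y)$ equals $2e(Y)+3e(Y,V(G)\setminus Y)$. On the other hand, $2\sum_{z\in Y}d_G(z)=4e(Y)+2e(Y,V(G)\setminus Y)$. So the claimed inequality fails whenever $e(Y,V(G)\setminus Y)>2e(Y)$, for instance when $Y$ is independent. Your justification, $e(Y,V(G)\setminus Y)\le\sum_{z\in Y}d_G(z)$, only yields $3\sum_{z\in Y}d_G(z)\le 180\ell n$. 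That is exactly the naive bound you were trying to avoid, so as written the argument does not reach $120\ell n$.

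The fix is to drop the special treatment of $Z_d\cap Y$, because nothing in Step 2 uses $z\notin Y$. Every $z\in Z_d$ is distinct from $u$ and $v$, and $V(G)=A\cup Y\cup D\cup\{u,v\}$. Hence $d\le 4+|N_G(z)\cap Y|+|D|+2$, and therefore $d\le 2|N_G(z)\cap Y|$, for all $z\in Z_d$, including those in $Y$. The sets $Z_d$ are disjoint, so summing gives $\sum(m_d-2)d\le 2\sum_{x\in V(G)}|N_G(x)\cap Y|=2\sum_{y\in Y}d_G(y)\le 120\ell n$ by Lemma~\ref{lem}(c). This is precisely the paper's double count. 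Edges inside $Y$ are charged from both endpoints, which is consistent with $\sum_{y\in Y}d_G(y)$ counting them twice, so no extra factor appears.
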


\begin{proof}
Fix $d\ge 4n-4\beta +12$ with $m_d\ge3$. Then Lemma~\ref{lem:uv-structure}(d) gives $d\ne\beta$. 

\begin{claim}
    There do not exist three distinct vertices $z_1,z_2,z_3$ of degree $d$ such that $z_i$ has at most $d/2$ neighbors in $Y$ for each $i=1,2,3$.
\end{claim}
\begin{proof}
Suppose otherwise that such $z_1,z_2,z_3$ exist. Since $d\ne\beta$, none of $z_1,z_2,z_3$ equals $u$ or $v$.
By Lemma~\ref{lem:uv-structure}(a) and the assumption $d\ge 4n-4\beta +12$, for each $i\in \{1,2,3\}$,
\[
e(\{z_i\},A)\ge d - e(\{z_i\},Y) - |D| - 2 \ge \frac{d}{2} - (2n-2\beta+2I_{uv}-2) - 2 \ge 4.
\]
By Lemma~\ref{lem:uv-structure}(b), either $e(\{z_i\}, A_u\setminus A_{u,4})\ge 4$ or $e(\{z_i\}, A_v\setminus A_{v,4})\ge 4$ holds. Without loss of generality, by the Pigeonhole Principle, we may assume that $z_1$ and $z_2$ have at least four neighbors in $A_u\setminus A_{u,4}$. Then, we can choose distinct vertices $x\in N_G(z_1)\cap(A_u\setminus (A_{u,4}\cup \{z_2\}))$ and $y\in N_G(z_2)\cap(A_u\setminus \{z_1\})$. Applying Lemma~\ref{lem} (a) with $F=\{z_1,z_2,y,u\}$ and $k=2\ell-4$, we obtain a path $xx_1\cdots x_{2\ell-4}$ avoiding $F$ with $x_{2\ell-4}\in A_u$. Then $z_1 xx_1\cdots x_{2\ell-4} u y z_2$ is a $2\ell$-path, contradicting $d(z_1)=d(z_2)$.
\end{proof}

By the above claim, there are at least $m_d-2$ vertices of degree $d$ with more than $d/2$ neighbors in $Y$.
It follows that
\[
(m_d-2)\times \frac{d}{2} \le \sum_{d_G(x)=d} e(\{x\},Y).
\]
Summing over all $d\ge 4n-4\beta +12$ with $m_d\ge3$ and using Lemma~\ref{lem}(c), we conclude that
\[
\sum_{\substack{d\ge4(n-\beta)+12\\m_d\ge3}} (m_d-2)d
\le 2\sum_{x\in V(G)} e(\{x\},Y)
= 2\sum_{y\in Y} d_G(y)
\le 120\ell n.
\]
This completes the proof.
\end{proof}

\begin{lem}\label{lem:n-beta-constant}
There exists a constant $c_1=c_1(\ell)$ such that $\beta\ge n-c_1$.
\end{lem}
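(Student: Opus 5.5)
We argue by a degree-counting upper bound on $2|E(G)|$, now using the sharper structure around the fixed pair $(u,v)$. Write $\delta:=n-\beta\ge 0$ (by Lemma~\ref{lem:uv-structure}(a)), and recall $\delta\le 300\ell\sqrt n$ by Lemma~\ref{lem:beta-large}. The aim is to show $2|E(G)|\le n^2+n-c\,\delta^2+O_\ell(n)$ for some absolute $c>0$. This would force $\delta^2=O_\ell(n)$, i.e. $\delta=O_\ell(\sqrt n)$ again. To reach a constant we need a linear rather than quadratic loss: $2|E(G)|\le n^2+n-c\,\delta n+O_\ell(n)$, so that $\delta\le c_1(\ell)$.

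The constraints on the degree multiset are as follows.
\begin{enumerate}[label=(\roman*)]
\item All degrees are at most $2n-\beta=n+\delta$ by Lemma~\ref{lem:uv-structure}(c).
\item Every degree $d>\beta$ appears at most once, by maximality of $\beta$.
\item The degree $\beta$ appears exactly twice, by Lemma~\ref{lem:uv-structure}(d).
\item For $d\ge 4\delta+12$, the excess multiplicities $\sum (m_d-2)d$ over $m_d\ge3$ total at most $120\ell n$, by Lemma~\ref{cl:excess3}.
\item Degrees below $4\delta+12=O(\sqrt n)$ contribute at most $2n(4\delta+12)=O_\ell(n^{3/2})$.
\end{enumerate}
That last term is too crude, so I would first sharpen it. Vertices of degree less than $4\delta+12$ lie mostly in $D\cup Y$, since by Lemma~\ref{lem}(d) the degrees in $A$ are essentially distinct on each side. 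Moreover $|D|=2\delta+2I_{uv}-2$, and $\sum_{Y}d\le 60\ell n$. So the low-degree vertices outside $Y$ number only $O(\delta)$ plus a bounded number per degree value on each side, and their total degree is $O(\delta^2)+O_\ell(n)=O_\ell(n)$.

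Maximizing the degree sum under these constraints, the best configuration is: one vertex at each degree $n+\delta,\dots,\beta+1$; two at $\beta$; two at each degree below $\beta$ (one per side); plus the $O_\ell(n)$ error. Take $2n$ slots in total. The top $2\delta+1$ slots (degrees $>\beta$, plus $\beta$ twice) sum to about $(2\delta+1)n$. The remaining $2n-2\delta-1$ slots, two per value descending from $\beta-1$, sum to roughly
\[
2\sum_{i=1}^{n-\delta} (\beta-i)\approx \beta^2-\beta \approx n^2-2\delta n .
\]
The total is therefore about $n^2+n+2\delta n-2\delta n$, which cancels at first order. I therefore expect the true gain to come from a finer comparison with the half graph's degree sequence $1,1,2,2,\dots,n,n$, which sums to exactly $n^2+n$.

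The comparison I would aim for is the following. Degrees above $\beta$ occupy only $\delta$ values, each with multiplicity one, whereas the half graph uses pairs there. Conversely, the vertices of $A_u$ and $A_v$ have degrees at most $\beta$-ish. Each side $A_u$ has $|A_u|=\beta-I_{uv}$ vertices with distinct degrees, and each such vertex misses all of $A_v$ by Lemma~\ref{lem:uv-structure}(b). Hence a vertex of $A_u$ has degree at most $|A_u|+|D|+2\le n+\delta+O(1)$, and the degrees in $A_u$ are distinct values. I would bound $\sum_{A_u}d+\sum_{A_v}d$ using two chains of distinct values, each chain capped near $n+\delta$ and of length $\beta$. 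In addition, any degree exceeding $\beta$ may be used by only one of the two chains. Then $D$ has $O(\delta)$ vertices of degree at most about $n+\delta$, contributing $O(\delta n)$.

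The main obstacle is exactly this accounting. The degrees above $\beta$ (up to $\delta$ values) are shared with $D$ and can be used only once overall, so the two chains lose roughly $\delta$ values of size about $n$. I would try to show this deficit is at least $\delta n - O_\ell(n)$ while the gain from $D$ is at most $\delta n/2+O_\ell(n)$, perhaps by showing that vertices of $D$ have degree about $n/2$ or need a half-graph-like nesting. Comparing then gives $\delta=O_\ell(1)$.
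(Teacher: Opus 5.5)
Your proposal does not prove the statement. The decisive step, a loss of order $\delta n$ in the degree count, is only announced (``I would try to show\dots''), and the constraints you use cannot supply it.

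Under (i)--(v), take one vertex at each of the $2\delta$ values in $[\beta+1,n+\delta]$ and two at each value in $[1,\beta]$. This multiset is admissible and sums to $n^2+n+\delta^2$. So counting alone gives no contradiction at all, let alone a linear one.

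The two-chain refinement does not help either. One of $A_u\setminus A_{u,4}$, $A_v\setminus A_{v,4}$ can take all $2\delta$ values above $\beta$ and then continue below $\beta$. The other runs over $[0,\beta-1]$, and $D$ takes degrees $O(\delta)$. This again sums to $n^2+n+O(\delta^2)=n^2+n+O_\ell(n)$. Your intended deficit (``the chains lose $\delta$ values of size about $n$'') is just the first-order cancellation you already observed.

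Two premises are also wrong. Lemma~\ref{lem:uv-structure}(b) does not say a vertex of $A_u$ misses $A_v$; it says no vertex has neighbors in both $A_u$ and $A_v$. In $H_n$ the vertices of $A_u$ are adjacent to most of $A_v$. And vertices of $A$ can have degree up to $2n-\beta>\beta$, so ``degrees in $A$ are at most $\beta$-ish'' is false.

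The missing ingredient is a structural argument for the top of the degree range, which the paper runs as a dichotomy. Put $c=17(300\ell)^2+400\ell+9$.

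\emph{Few high-degree vertices.} If fewer than $120\ell+2$ vertices have degree at least $2n-\beta-c$, the count loses about $cn$ from the nearly empty window $[2n-\beta-c,2n-\beta]$. This beats the second-order gain $17\delta^2\le 17(300\ell)^2 n$.

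\emph{Many high-degree vertices.} Otherwise, once $\delta>c+\ell+1$, these vertices lie in $R$, so $r\ge \ell+1$. Since $\sum_{y\in Y}d(y)\le 60\ell n$, two of them, $w_1,w_2$, have neighbors in $A$; say $w_1$ has a neighbor $r_0\in A_v\setminus A_{v,4}$. By (b), $N(w_1)\subseteq A_v\cup D\cup\{u,v\}$, so $w_1$ misses at most $c+1$ vertices of $A_v$ and at most $c+1$ of $D$. Consider
\[
Z=\bigl((N(w_1)\cap A_v)\cup(D\cap N(w_1)\cap N(w_2))\bigr)\setminus\{r_0\}.
\]
It has at least $2n-\beta-3c-6$ vertices, and their degrees are pairwise distinct. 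For pairs involving $A_v$ this uses paths $y w_1 r_0 P r' v x$ built from Lemma~\ref{lem}(a); for pairs inside $D$ it uses Lemma~\ref{differentdegree}. Adding one vertex from each $U_{v_i}$, $i\neq j$ (Lemma~\ref{lem:U}), gives $|Z|+r-1$ vertices of distinct positive degrees. Lemma~\ref{eq:v1bound} allows at most $n+\ell+r$, hence $n-\beta\le 3c+\ell+7$.

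The key mechanism is that a vertex of degree about $n+\delta$ forces about $n+\delta$ distinct degrees. That is what turns the excess $\delta$ into a contradiction, and nothing in your plan plays that role.
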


\begin{proof}
Set $c:=17(300\ell)^2+400\ell+9$. 
If $n-\beta\le c$, the proof is done. Assume that $n-\beta>c$.

Let $V^*$ be the set of vertices of degree at least $2n-\beta-c$.
We claim that $|V^*|\ge 120\ell+2$. Suppose otherwise. 
For every integer $d\ge 4n-4\beta +12$, we keep at most two vertices of degree $d$ and remove all others. 
Denote by $V'$ the set of the remaining vertices. 
By the definition of $\beta$, for each $d\in [\beta+1, 2n-\beta-c-1]$, there is at most one vertex of degree $d$ in $V'$. By the definition of $V'$, there are at most two vertices in $V'$ of degree $d$ for each $d\in[4n-4\beta+12, \beta]$. 
Let $S$ be a multiset of integers consisting of one $d$ for each $d\in [\beta+1, 2n-\beta-c-1]$, two $d$ for each $d\in [4n-4\beta+12, \beta]$, and $8n-8\beta+c+23$ number of $4n-4\beta+11$.
Then, $|S|=2n$. Moreover, the sum of the degrees of all vertices in $V'\backslash V^*$ is at most the sum of all integers in $S$.
Thus,
\begin{equation}\label{middle}
2|E(G)|\le \sum_{w\in V(G)\backslash V'}d_G(w)+\sum_{d\in S}d +\sum_{w\in V^*}d_G(w).
\end{equation}
By Lemma~\ref{cl:excess3} and the definition of $V'$, we find $\sum_{w\in V(G)\backslash V'}d_G(w)\le 120\ell n$.
By Lemma~\ref{lem:uv-structure}(c) and the assumption $|V^*|\le 120\ell +1$, we find $\sum_{w\in V^*}d_G(w)\le (120\ell +1)(2n-\beta)$.
It follows from \eqref{middle} and the definition of $S$ that
$$2|E(G)| \le n^2 + (360\ell +2-c)n + 17(n-\beta)^2 + O_\ell (n-\beta )\le n^2 + O_\ell (\sqrt{n}),$$
where the last inequality follows from Lemma~\ref{lem:beta-large} and the definition of $c$. This contradicts $|E(G)|\ge (n^2+n)/2$ and completes the claim.

Now, suppose $n-\beta>c+\ell+1$ (Otherwise, the proof is done). Then $V^*\subseteq R$, as $2n-\beta-c > n+\ell+1$.  
Hence, $r\ge 120\ell+2\ge\ell+1$.
We claim that at most $120\ell$ vertices in $V^*$ have no neighbors in $A$. Recall that $A:=(A_u\cup A_v)\backslash Y$. For any $w\in V^*$ with no neighbors in $A$,
\[
e(\{ w\},Y)\ge d_G(w) - |D| - 2 \ge 2n-\beta-c - (2n-2\beta) - 2 = \beta - c - 2 \ge \frac{n}{2},
\]
where we have used Lemma~\ref{lem:uv-structure}(a) and Lemma~\ref{lem:beta-large}. The claim then follows from Lemma~\ref{lem}(c) and the fact that $\sum_{w\in V(G)}e(\{ w\},Y)=\sum_{v\in Y}d_G(v)$. We may thus choose two distinct vertices $w_1,w_2\in V^*$, each with at least one neighbor in $A$.

Without loss of generality, assume $N_G(w_1)\cap(A_v\setminus A_{v,4})\ne \emptyset$.
Pick $r_0\in N_G(w_1)\cap(A_v\setminus A_{v,4})$. By Lemma~\ref{lem:uv-structure}(b), $N_G(w_1)\cap A_u=\emptyset$, so $N_G(w_1)\subseteq A_v\cup D\cup\{u,v\}$. 
By the fact that $|A_v\cup D\cup\{u,v\}|=2n-\beta +I_{uv}$ and $d_G(w_1)\ge 2n-\beta -c$, we have $|A_v\setminus N_G(w_1)|\le c+1$ and $|D\setminus N_G(w_1)|\le c+1$. The same argument applies to $w_2$ on its respective side, giving $|D\setminus N_G(w_2)|\le c+1$.

Define $Z := \bigl( (N_G(w_1)\cap A_v) \cup (D\cap N_G(w_1)\cap N_G(w_2)) \bigr) \setminus\{r_0\}$. Then
\begin{equation}\label{Z}
|Z|\ge |A_v| + |D| - 3(c+1) - 1 = 2n-\beta+I_{uv} - 3c - 6 \ge 2n-\beta - 3c - 6.
\end{equation}

We claim that vertices in $Z$ have pairwise distinct degrees.
Let $x,y$ be two vertices in $Z$.
If one of $x$ and $y$ lies in $A_v$, say $x\in A_v$, then applying Lemma~\ref{lem}(a) with $F=\{x,y,w_1,v\}$ and $k=2\ell-4$ yields a path $P$ of length $2\ell-4$ from $r_0$ to some $r'\in A_v$ avoiding $F$. Then $y w_1 r_0 P r' v x$ is a $2\ell$-path, giving $d(x)\ne d(y)$. It remains to consider the case $x,y\in D\cap N_G(w_1)\cap N_G(w_2)$. Since $w_1,w_2$ lie in $V^*\subseteq R$ and $r\ge \ell+1$, Lemma~\ref{differentdegree} gives $d(x)\ne d(y)$, as claimed.

Write $R=\{v_1,\ldots,v_r\}$ with $w_1=v_j$ for some $j\in\{ 1,\ldots,r\}$. By Lemma~\ref{lem:U}, we can choose a vertex $u_i\in U_{v_i}$ for each $i\ne j$. 
Since $Z\subseteq N(v_j)$ and $U_{v_i}\subseteq N(v_i)$ for each $i\ne j$, Lemma~\ref{differentdegree} shows that any vertex in $\{u_i:i\ne j\}$ and any vertex in $Z\cup\{u_i:i\ne j\}$ have different degrees. Together with the above claim, we conclude that all vertices in $Z\cup\{u_i:i\ne j\}$ have pairwise distinct positive degrees. 
By the definition of $U_{v_i}$, we have $|Z\cup\{u_i:i\ne j\}|=|Z|+r-1$.
By Lemma~\ref{eq:v1bound}, we derive $|Z|+r-1\le n+\ell +r$.
It follows from \eqref{Z} that $\beta\ge n-3c-\ell-7$, completing the proof.
\end{proof}

\begin{lem}\label{lem:large-good-vertices}
Set $c_2:= 180\ell+2c_1+4$. Then there exists a vertex $w\in A_u\setminus A_{u,4}$ of degree at least $\beta-c_2$ such that $N_G(w)\cap A\ne \emptyset$. The analogous statement holds with \(u\) and \(v\) interchanged.
\end{lem}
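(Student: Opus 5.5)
Argue by contradiction, using degree counting. Suppose every vertex $w\in A_u\setminus A_{u,4}$ with $N_G(w)\cap A\neq\emptyset$ has degree less than $\beta-c_2$. Lemma~\ref{lem:n-beta-constant} gives $\beta\ge n-c_1$, so $|D|\le 2c_1+2$ and $|A_u|=|A_v|=\beta-I_{uv}\ge n-c_1-1$. The goal is to show that then $\sum_{x\in A_u}d_G(x)$ is too small. This forces $2|E(G)|<n^2+n$, once we add the bound for $A_v$, $D$, $Y$ and $u,v$.

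\textbf{Step 1: a structural split.} Partition $A_u\setminus A_{u,4}$ into
\[
W_1:=\{w\in A_u\setminus A_{u,4}: N_G(w)\cap A\ne\emptyset\}
\quad\text{and}\quad
W_0:=(A_u\setminus A_{u,4})\setminus W_1 .
\]
A vertex $w\in W_0$ has all its neighbours in $Y\cup D\cup\{u,v\}$. Its degree is therefore at most $e(\{w\},Y)+|D|+2$, and summing over $W_0$ gives
\[
\sum_{w\in W_0}d_G(w)\le \sum_{y\in Y}d_G(y)+(2c_1+4)|W_0|\le 60\ell n+(2c_1+4)n
\]
by Lemma~\ref{lem}(c). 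For $W_1$ the hypothesis gives $d_G(w)<\beta-c_2$. By Lemma~\ref{lem}(d) these degrees are pairwise distinct, so
\[
\sum_{w\in W_1} d_G(w)\le \sum_{i=1}^{|W_1|}(\beta-c_2-i)\le |W_1|(\beta-c_2)-\tfrac{|W_1|^2}{2}.
\]
Finally $\sum_{x\in A_{u,4}}d_G(x)\le 30\ell n$, again from the proof of Lemma~\ref{lem}(c).

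\textbf{Step 2: the other side and the remainder.} On $A\cap A_v$ the degrees are distinct (Lemma~\ref{lem}(d)) and at most $\Delta(G)\le 2n-\beta\le n+c_1$ (Lemma~\ref{lem:uv-structure}(c)). Hence
\[
\sum_{x\in A_v\setminus A_{v,4}} d_G(x)\le \sum_{i=0}^{n-1}(n+c_1-i)= \tfrac{n^2}{2}+O(n).
\]
A sharper version is needed here, and we get it from the exact count rather than Lemma~\ref{eq:distinct-degree-sum}. The remaining contributions are $\sum_{Y}d_G\le 60\ell n$, $\sum_{D}d_G\le (2c_1+2)(2n)$, and $d(u)+d(v)\le 2n$.

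Adding everything, the $A_u$ side contributes at most $\max_{m}\bigl(m(\beta-c_2)-m^2/2\bigr)+(60\ell+30\ell+2c_1+4)n$. The maximum is about $\beta^2/2\le n^2/2$, attained at $m\approx\beta$. The key point is that the distinct-degree ceiling on $W_1$ is $\beta-c_2$ rather than about $n$. This loses about $c_2 n$ compared with the $A_v$ side. Since $c_2=180\ell+2c_1+4$ dominates the accumulated linear error terms ($90\ell n$ from $Y$ and $A_{u,4}$, $(2c_1+4)n$ from $W_0$, the $D$ and $u,v$ terms, and $c_1 n$ of slack on the $A_v$ side), we should get $2|E(G)|\le n^2+n-\Omega(n)$, a contradiction.

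\textbf{Main obstacle: the bookkeeping of linear terms.} The main risk is that the quadratic terms cancel exactly: both sides give about $n^2/2$. The contradiction therefore lives entirely at order $n$, and every linear error must be tracked with explicit constants; it is also crucial that $Y$'s degree sum is not double-counted. I would handle this by writing $\sum_{x\in V}d_G(x)=2\sum_{x\in A_u}e(\{x\},\cdot)+\dots$ only through the disjoint partition $V=A_u\sqcup A_v\sqcup D\sqcup\{u,v\}$. For the $A_v$ bound I would use $|A_v|\le \beta\le n$ together with the ceiling $2n-\beta$, which gives $\sum\le |A_v|(2n-\beta)-|A_v|^2/2+O(1)\le n^2/2+c_1 n$. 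If the constant $c_2$ turns out too tight, the fallback is to lower the $A_u$ bound further by noting that $W_1\cup W_0$ has size at least $|A_u|-|A_{u,4}|$. The vertices of $W_0$ have degree $O(1)$ apart from their edges into $Y$, which again costs a linear term in our favour.
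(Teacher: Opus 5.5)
\textbf{The accounting does not close.} Your outline is sound: a contradiction argument plus a degree count over $A_u\setminus A_{u,4}$ and $A_v\setminus A_{v,4}$, using Lemma~\ref{lem}(d) and the ceiling $\beta-c_2$. But the final tally fails for the stated $c_2=180\ell+2c_1+4$, so the claimed contradiction does not follow.

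The constant $c_1$ from Lemma~\ref{lem:n-beta-constant} is an arbitrary constant; its proof gives $c_1$ of order $10^6\ell^2$. So $c_2$ can absorb only about $2c_1n$ of $c_1$-dependent loss. Your tally carries about $7c_1n$:
\begin{itemize}
\item $(2c_1+4)n$ for $W_0$;
\item $(2c_1+2)\cdot 2n$ for $D$;
\item $c_1n$ of slack on the $A_v$ side.
\end{itemize}
Summing your own terms gives only $2|E(G)|\le n^2+(5c_1+6-30\ell)n+O(1)$, which is no contradiction.

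The structural leak is $W_0$. You charge $(|D|+2)|W_0|$ with $|W_0|$ up to $n$, on top of the $W_1$ maximum, which is attained at $|W_1|\approx\beta-c_2$. But $|W_0|+|W_1|\le|A_u|\le\beta$, so both cannot be large.

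The other two leaks can be repaired with facts you already cite:
\begin{itemize}
\item Use $\sum_{x\in D}d_G(x)\le|D|\,\Delta(G)\le 2c_1(2n-\beta)$ instead of $(2c_1+2)\cdot 2n$.
\item Do not replace $(\beta-c_2)^2/2$ by $n^2/2$. Its $-(n-\beta)n$ term is exactly what cancels the $+(n-\beta)n$ produced by the ceiling $2n-\beta$ on the $A_v$ side.
\end{itemize}

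\textbf{How the paper avoids this.} It splits $A_u\setminus A_{u,4}$ by degree rather than by adjacency to $A$. Under the contradiction hypothesis, every vertex of degree at least $\beta-c_2$ has no neighbour in $A$. Hence it has at least $\beta-c_2-|D|-2\ge n/2$ neighbours in $Y$, so by Lemma~\ref{lem}(c) there are at most $120\ell$ such vertices, each of degree at most $2n-\beta$. All remaining vertices have pairwise distinct degrees below $\beta-c_2$. They contribute at most $\sum_{d=1}^{\beta-c_2-1}d$, with no separate charge for vertices lacking $A$-neighbours.

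Since $\sum_{d=1}^{\beta-c_2-1}d+\sum_{d=1}^{2n-\beta}d=n^2-c_2n+O(1)$, the remaining losses are $120\ell(2n-\beta)+60\ell n+2c_1(2n-\beta)+2\beta$. These are exactly what $c_2$ is calibrated to absorb, giving $2|E(G)|\le n^2-2n+O(1)$.

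\textbf{Saving your partition.} Your $W_0$/$W_1$ split can also be made to work, but only by actually carrying out the trade-off your ``fallback'' gestures at. The relevant fact is the upper bound $|W_0|+|W_1|\le\beta$, not a lower bound.
\begin{itemize}
\item Bound the $A_u\setminus A_{u,4}$ part by $\max_m\bigl[m(\beta-c_2)-\tfrac{m(m+1)}{2}+(|D|+2)(\beta-m)\bigr]+60\ell n\le\tfrac{\beta^2}{2}-(c_2+\tfrac12)\beta+60\ell n+O(1)$.
\item Add $60\ell n$ for $Y$, the sharper bound for $D$, $2\beta$ for $u,v$, and $\sum_{d=1}^{2n-\beta}d$ for $A_v\setminus A_{v,4}$.
\end{itemize}
This yields $2|E(G)|\le n^2-(60\ell+2)n+O(1)$, which gives the contradiction.
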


\begin{proof}
By Lemma~\ref{lem:n-beta-constant} and Lemma~\ref{lem:uv-structure}(a), $0\le n-\beta\le c_1$. 
By symmetry, it suffices to prove the statement for $A_u\setminus A_{u,4}$. 
Suppose, for a contradiction, that every vertex in $A_u\setminus A_{u,4}$ of degree at least $\beta-c_2$ has no neighbors in $A$.
By Lemma~\ref{lem:uv-structure}(a), $B=\emptyset$ and $|D|\le 2(n-\beta)\le 2c_1$. Thus, for any $w\in A_u\setminus A_{u,4}$ with $d_G(w)\ge\beta-c_2$ we have
\[
e(\{w\},Y)\ge d_G(w) - |D| - 2 \ge \beta - c_2 - 2c_1 - 2 \ge \frac{n}{2},
\]
where the last inequality follows from Lemma~\ref{lem:beta-large}.
Therefore, by Lemma~\ref{lem}(c) and the fact that $\sum_{w\in V(G)}e(\{ w\},Y)=\sum_{v\in Y}d_G(v)$, there are at most $120\ell$ vertices in $A_u\setminus A_{u,4}$ of degree at least $\beta-c_2$.
It follows by Lemma~\ref{lem}(d) and Lemma~\ref{lem:uv-structure}(c) that
\begin{equation}\label{wAuAu}
\sum_{w\in A_u\setminus A_{u,4}}d_G(w)
\le 120\ell(2n-\beta)+\sum_{d=1}^{\beta-c_2-1}d.
\end{equation}
Moreover, Lemma~\ref{lem}(d) and Lemma~\ref{lem:uv-structure}(c) give
\begin{equation}\label{wAvAv}
\sum_{w\in A_v\setminus A_{v,4}}d_G(w)\le \sum_{d=1}^{2n-\beta}d.
\end{equation}

By Lemma~\ref{lem:uv-structure}(a) and Lemma~\ref{lem:n-beta-constant}, we have $|D|\le 2(n-\beta)\le 2c_1$. Then, Lemma~\ref{lem:uv-structure}(c) yields $\sum_{w\in D}d_G(w)\le 2c_1(2n-\beta)$. Together with \eqref{wAuAu}, \eqref{wAvAv}, and Lemma~\ref{lem}(c), we deduce that
\begin{align*}
2|E(G)|&\le 2\beta +120\ell(2n-\beta)+\left(\sum_{d=1}^{\beta-c_2-1}d\right)+\left(\sum_{d=1}^{2n-\beta}d \right)+ 60\ell n + 2c_1(2n-\beta) \\
&= n^2 + (180\ell+2c_1+2-c_2)n+(n-\beta)^2 + (120\ell + 2c_1 + c_2-1)(n-\beta)+ O_\ell(1)< n^2 + n,
\end{align*}
where the last inequality follows from Lemma~\ref{lem:n-beta-constant} and the definition of $c_2$.
This contradicts the assumption that $|E(G)|\ge (n^2+n)/2$.
\end{proof}

\begin{lem}\label{w1w2}
There exist four vertices $w_1,w_2,x,y$ such that
$w_1,w_2\in A$; $d_G(w_1),d_G(w_2)\ge\beta-c_2$; 
$x\in N_G(w_1)\cap(A_u\setminus A_{u,4})$; and $y\in N_G(w_2)\cap(A_v\setminus A_{v,4})$.
\end{lem}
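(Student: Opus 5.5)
The plan is to build the four vertices from the two vertices given by Lemma~\ref{lem:large-good-vertices}, one for each side. Let $w\in A_u\setminus A_{u,4}$ have $d_G(w)\ge\beta-c_2$ and a neighbor $z\in A$. Let $w'\in A_v\setminus A_{v,4}$ have $d_G(w')\ge\beta-c_2$ and a neighbor $z'\in A$. Since $A=(A_u\setminus A_{u,4})\cup(A_v\setminus A_{v,4})$, each of $z,z'$ lies on exactly one of the two good sides. Throughout I use that a vertex of degree at least $\beta-c_2\ge n-c_1-c_2$ sees almost everything available to it: by Lemma~\ref{lem:uv-structure}(b) its neighbours lie in $A_u\cup D\cup\{u,v\}$ or in $A_v\cup D\cup\{u,v\}$. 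These sets have size $2n-\beta+I_{uv}\le n+c_1+1$ by Lemma~\ref{lem:uv-structure}(a) and Lemma~\ref{lem:n-beta-constant}. So it misses only $O_\ell(1)$ vertices on its side.

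\textbf{Easy cases.} If $z\in A_u\setminus A_{u,4}$ and $z'\in A_v\setminus A_{v,4}$, take $(w_1,x)=(w,z)$ and $(w_2,y)=(w',z')$. If $z\in A_v\setminus A_{v,4}$ and $z'\in A_u\setminus A_{u,4}$, take $(w_1,x)=(w',z')$ and $(w_2,y)=(w,z)$. In both cases all requirements hold, since $w,w'\in A$ and $x,y$ are neighbours on the correct sides.

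\textbf{Hard case.} The main obstacle is when both $z$ and $z'$ lie on the same side, say both in $A_u\setminus A_{u,4}$; the other case is symmetric. Then $(w_1,x)=(w,z)$ works, and we still need some $w_2$. By Lemma~\ref{lem:uv-structure}(b), $w$ has a neighbour in $A_u$ and so has no neighbour in $A_v$. Likewise $w'\in A_v$ has the neighbour $z'\in A_u$, so $N_G(w')\subseteq A_u\cup D\cup\{u,v\}$. By the degree count above, $w'$ is then adjacent to all but $O_\ell(1)$ vertices of $A_u$.

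I would aim for a contradiction as follows. First I would check whether $w$ itself is adjacent to $w'$. If so, $w$ has neighbours $z\in A_u$ and $w'\in A_v$, contradicting (b). If not, consider $z$. It is adjacent to $w\in A_u$, so by (b) it has no neighbour in $A_v$. Since $w'$ misses only $O_\ell(1)$ vertices of $A_u$, we get $z\notin N_G(w')$ unless $z$ is exceptional.

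To exploit this, I would apply the same dichotomy to all high-degree vertices. By (b), every vertex of $A_u$ adjacent to $w'$ has no neighbour in $A_u$. Hence $A_{u,1}$, and more generally the set of vertices of $A_u$ with an $A_u$-neighbour, is $O_\ell(1)$. With so few vertices of $A_u$ adjacent inside $A_u$, I would redo the degree-sum count of Lemma~\ref{lem:large-good-vertices}. Vertices of $A_u\setminus A_{u,4}$ adjacent to $w'$ have neighbourhoods inside $A_v\cup D\cup\{u,v\}$; vertices of $A_v$ adjacent to them cannot see $A_v$. This forces the degrees in $A_u$ or in $A_v$ to be so constrained that $2|E(G)|<n^2+n$, in the spirit of \eqref{wAuAu}--\eqref{wAvAv}.

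If the count does not close cleanly, I would instead produce a second high-degree vertex of $A_u$ that has a neighbour in $A_v$, which then serves as $w_2$. For this I would use that almost all of $A_u$ is adjacent to $w'\in A_v$ together with Lemma~\ref{lem}(d). Vertices of $A_u$ with no neighbour in $A_u$ have degree at most $|A_v|+|D|+2$, so they need neighbours in $A_v\setminus A_{v,4}$ to reach degree $\beta-c_2$. Lemma~\ref{lem}(c) controls those vertices whose neighbours are mostly in $Y$.
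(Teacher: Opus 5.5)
Your reduction to Lemma~\ref{lem:large-good-vertices} and your two easy cases are correct. However, the hard case (both $z,z'\in A_u\setminus A_{u,4}$) is never closed. The degree-sum recount is not carried out, the fallback of finding another high-degree vertex is only gestured at, and you yourself hedge on whether either works.

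The missing step is short, and you already hold both ingredients. You note that $w$ has no neighbour in $A_v$, but you apply your ``sees almost everything'' count only to $w'$. Apply it to $w$ as well. Since $B=\emptyset$ and $N_G(w)\cap A_v=\emptyset$, we have $N_G(w)\subseteq A_u\cup D\cup\{u,v\}$, so $e(\{w\},A_u)\ge d_G(w)-|D|-2\ge\beta-c_2-2c_1-2$. Likewise $e(\{w'\},A_u)\ge\beta-c_2-2c_1-2$. As $|A_u|=\beta-I_{uv}\le\beta$, this gives $|N_G(w)\cap N_G(w')\cap A_u|\ge\beta-2c_2-4c_1-4>0$. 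Any common neighbour is then adjacent to both $w\in A_u$ and $w'\in A_v$, contradicting Lemma~\ref{lem:uv-structure}(b). Hence the hard case cannot occur, and the two easy cases (after relabelling) give the lemma; this is precisely how the paper argues.

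In your own terms: you correctly derived that only $O_\ell(1)$ vertices of $A_u$ have a neighbour in $A_u$, since no vertex of $N_G(w')\cap A_u$ does. But every vertex of $N_G(w)\cap A_u$, which is all but $O_\ell(1)$ of $A_u$, has the neighbour $w\in A_u$. Your check on the single vertex $z$ yields nothing, because Lemma~\ref{lem:uv-structure}(b) already forces $z\notin N_G(w')$ unconditionally; so ``unless $z$ is exceptional'' is not a real case distinction. The contradiction only appears when you run the same reasoning over all of $N_G(w)\cap A_u$ at once.
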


\begin{proof}
    Let $c_1$ be given by Lemma~\ref{lem:n-beta-constant}, and let \(c_2\) be given by Lemma~\ref{lem:large-good-vertices}. By Lemma~\ref{lem:large-good-vertices}, we can choose two vertices $w_1\in A_u\setminus A_{u,4},\ w_2\in A_v\setminus A_{v,4}$ each of degree at least $\beta-c_2,$ along with two vertices $ x\in N_G(w_1)\cap A, y\in N_G(w_2)\cap A$. We claim that $\{x,y\}\nsubseteq A_u\setminus A_{u,4}$. Suppose otherwise that $\{x,y\}\subseteq A_u\setminus A_{u,4}$.
By Lemma~\ref{lem:uv-structure}(b),
$N_G(w_1)\cap A_v = N_G(w_2)\cap A_v = \emptyset$.
Hence, for \(i\in\{1,2\}\),
\[
e(\{w_i\},A_u)\ge d_G(w_i)-|D|-2 \ge\beta-c_2-2c_1-2,
\]
where we have used $|D|\le 2n-2\beta\le 2c_1$.
It follows that
\begin{align*}
|N_G(w_1)\cap N_G(w_2)\cap A_u|\ge e(\{w_1\},A_u)+e(\{ w_2\},A_u)-|A_u|\ge\beta-2c_2-4c_1-4+I_{uv}>0.
\end{align*}
This contradicts Lemma~\ref{lem:uv-structure}(b), as $w_1\in A_u$ and $w_2\in A_v$. By symmetry, we have $\{x,y\}\nsubseteq A_v\setminus A_{v,4}$.
Therefore, relabelling if necessary, we obtain the desired vertices.
\end{proof}

\begin{lem}\label{lem:half-graph-structure}
We have $\beta=n $ and $G\cong H_n.$
\end{lem}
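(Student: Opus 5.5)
The plan is to use the vertices from Lemma~\ref{w1w2} to show that all vertices of $A$ have pairwise distinct degrees (up to $O_\ell(1)$ exceptions), and then run a tight degree-sum count to pin down $\beta=n$ and the exact structure.

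\textbf{Step 1: distinct degrees across the two sides.} Take $w_1,w_2,x,y$ as in Lemma~\ref{w1w2}. By Lemma~\ref{lem:uv-structure}(b), $w_1$ has all its $A$-neighbours on one side and $w_2$ on the other. Since $x\in A_u$ and $y\in A_v$ with $w_1\sim x$, $w_2\sim y$, we get $w_1\in A_u$ and $w_2\in A_v$. Each of $w_1,w_2$ has degree at least $\beta-c_2\ge n-O_\ell(1)$. Because $|D|\le 2c_1$, the vertex $w_1$ is adjacent to all but $O_\ell(1)$ vertices of $A_u$, and similarly $w_2$ is adjacent to all but $O_\ell(1)$ vertices of $A_v$.

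Now let $a\in A_u\setminus A_{u,4}$ and $b\in A_v\setminus A_{v,4}$. We want a $2\ell$-path from $a$ to $b$. The idea is to route through the high-degree vertices: a path of length $2\ell-4$ starting at $a$ and ending in $A_u$ (Lemma~\ref{lem}(a)), then $u$, then a common neighbourhood step, then across to $v$, then to $b$. However, $u$ and $v$ have no common neighbour ($B=\emptyset$), and no vertex sees both sides. So any crossing must use an edge between $A_u$ and $A_v$ or use $D$.

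I therefore expect the correct first step to be different: show that $e(A_u,A_v)=0$ (or nearly so) and $G[A_u]$ is sparse, so that all edges essentially go $A_u$--$A_v$ in a half-graph-like pattern. Concretely, if some vertex $w\in A_u$ has a neighbour $z\in A_u$ and many neighbours in $A_u$, then a path through $u$ and $w_1$ gives equal-degree conflicts inside $A_u$ (Lemma~\ref{lem}(d) already forces distinct degrees on $A_u\setminus A_{u,4}$). Since $|A_u|=\beta-I_{uv}$, the degrees in $A_u\setminus A_{u,4}$ are distinct and at most $\Delta\le 2n-\beta$. The same holds for $A_v$.

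\textbf{Step 2: the exact count.} Summing,
\[
2|E(G)|\le 2\beta+\sum_{A_u}d+\sum_{A_v}d+\sum_{D}d .
\]
With $|A_u|,|A_v|\le\beta$, distinct degrees and maximum degree $2n-\beta$, this is at most roughly $2\sum_{i=0}^{\beta-1}(2n-\beta-i)+O(n-\beta)\,n$. For this to be at least $n^2+n$, the degrees must be essentially $\{2n-\beta,\dots\}$ on each side, so both sides are nearly full intervals.

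To get exactness, I need a better accounting of $D$ and of the $Y$-vertices. First, show $Y=\emptyset$, since a vertex of degree below $2\ell+4$ inside a near-complete structure would waste too much. Then show $D=\emptyset$: a vertex of $D$ has neighbours only in $D\cup A_u\cup A_v$ but not on both sides, so it is bounded and costs a deficit of order $n$ unless $n-\beta=0$. With $D=\emptyset$ and $I_{uv}$ handled, $\beta=n$, and the parts $A_u\cup\{v\}$ and $A_v\cup\{u\}$ each have size $n$.

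Then $G$ is bipartite between them, because edges inside a part combined with the routing through $u$ or $v$ create $2\ell$-paths between equal-degree vertices. Within each part, the degrees are distinct and lie in $\{1,\dots,n\}$, summing to at least $(n^2+n)/2$. This forces degree sequence $\{1,\dots,n\}$ on each side. A bipartite graph with both degree sequences equal to $1,\dots,n$ is the half graph: this is threshold-graph uniqueness, proved by induction by removing the degree-$n$ vertex, which is adjacent to everything on the other side.

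\textbf{Main obstacle.} The hard part is the exactness step: upgrading the $O_\ell(1)$ deficits to zero. That means ruling out every edge inside a side, every vertex in $D$ or $Y$, and $I_{uv}=1$. Each of these must be shown to cost at least a linear loss in the edge count. I would attempt this via explicit $2\ell$-paths that use $w_1$ and $w_2$ to connect arbitrary vertices of the same side, combined with the extremal count in Step 2.
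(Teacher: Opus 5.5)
Your proposal has a genuine gap. It is an outline in which several intermediate targets are false for $H_n$ itself, and the one computation you carry out cannot force exactness.

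\textbf{False targets and a weak count.}
\begin{itemize}
\item The stated aim of Step~1 (distinct degrees on all of $A$ up to $O_\ell(1)$ exceptions) fails in $H_n$: there every degree $1,\dots,n-1$ occurs once in $A_u$ and once in $A_v$. So no $2\ell$-path from $A_u\setminus A_{u,4}$ to $A_v\setminus A_{v,4}$ can exist in general. The correct statement is only that each degree above a constant occurs at most twice.
\item Inferring $w_1\in A_u$ from $w_1\sim x\in A_u$ is a non sequitur. Lemma~\ref{lem:uv-structure}(b) only gives $N(w_1)\cap A_v=\emptyset$. In $H_n$, where $A_u$ and $A_v$ are independent and almost every edge runs between them, $w_1$ actually lies in $A_v$.
\item For the same reason, your target $e(A_u,A_v)=0$ is backwards.
\item $Y=\emptyset$ is false: the two degree-$1$ vertices of $H_n$ lie in $Y$.
\item $I_{uv}=1$ has to be proved, not ruled out.
\item Your Step~2 count uses distinctness only inside $A_u\setminus A_{u,4}$ and $A_v\setminus A_{v,4}$, together with $\sum_{Y}d\le 60\ell n$. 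It yields only about $2|E(G)|\le n^2+O\bigl((n-\beta+\ell)n\bigr)$. That linear slack cannot force $\beta=n$, let alone the exact structure.
\end{itemize}

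\textbf{What is missing.} The first missing ingredient is a mechanism that cuts the slack to $O_\ell(1)$. The paper sets $C_u:=(N(w_1)\cap A_u)\setminus\{x\}$ and defines $C_v$ analogously; each misses only $O_\ell(1)$ vertices of its side. It then proves Claim~\ref{cl:high-degree}:
\begin{itemize}
\item Every vertex of degree $>c_3$, including those in $Y$ and $D$, has at least three neighbours in $C_u$ or in $C_v$, and never both.
\item Two such vertices $p,q$ on the $C_u$ side are joined, generically, by the $2\ell$-path $p\,a\,w_1\,x\,P\,u\,b\,q$.
\item Hence each degree $>c_3$ occurs at most twice in all of $G$.
\end{itemize}
Degrees above $\beta$ occur at most once, so $2|E(G)|\le n^2+(n-\beta)^2-(n-\beta)+c_3^2$ unless some vertex $w_0$ has degree $2n-\beta$. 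If $\beta\le n-1$, the position of $w_0$ forces, up to symmetry, $w_0\in A_u$, $I_{uv}=1$ and $N(w_0)=A_v\cup D\cup\{u\}$. Then no vertex of $A_v\cup D$ has a neighbour in $A_v$, and the $\beta-1$ vertices of $A_v$ have degrees in $[2,\beta-1]$. This produces an equal-degree pair joined by a $2\ell$-path, so $\beta=n$. Now $D=\emptyset$ and $I_{uv}=1$ follow from Lemma~\ref{lem:uv-structure}(a).

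Even after this, two things remain unproved in your sketch:
\begin{itemize}
\item Your bipartiteness step (``edges inside a part create $2\ell$-paths'') has no mechanism behind it.
\item Distinctness of degrees on all of $A_v$, including $Y$, is not given by Lemma~\ref{lem}(d).
\end{itemize}
The paper obtains both from the unique degree-$(n-1)$ vertex $w_u\in A_u$. This vertex has no neighbour in $A_u$, so it sees all but one vertex of $A_v$. By Lemma~\ref{lem:uv-structure}(b), $A_v$ is then independent. Any $a,b\in A_v$ are joined by the $2\ell$-path $b\,w_u\,h\,P\,r\,v\,a$, so their degrees differ. Only at that point does your (correct) degree-sequence reconstruction of $H_n$ apply.
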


\begin{proof}
Let $w_1,w_2,x,y$ be given by Lemma~\ref{w1w2}.
Note that Lemma~\ref{lem:uv-structure}(b) implies $N_G(w_1)\cap A_v=N_G(w_2)\cap A_u=\emptyset$.
Set $C_u:=\bigl(N_G(w_1)\cap A_u\bigr)\setminus\{x\}$ and
$C_v:=\bigl(N_G(w_2)\cap A_v\bigr)\setminus\{y\}$.
Then we have
\[
|C_u|\ge d_G(w_1)-|D|-3\ \ 
{\rm and}\ \ 
|C_v|\ge d_G(w_2)-|D|-3.
\]
Together with the facts that $|A_u|\le\beta$, $|D|\le 2(n-\beta)\le 2c_1$, and $d_G(w_1),d_G(w_2)\ge\beta-c_2$, we deduce that
\begin{equation}\label{Cuv}
|A_u\setminus C_u|,\ |A_v\setminus C_v|\le \beta - (\beta - c_2 - 2c_1 - 3)
= c_2 + 2c_1 + 3.
\end{equation}

Set $c_3:=2c_2+6c_1+12.$

\begin{claim}\label{cl:high-degree}
For any integer $d>c_3$, there are at most two vertices in $G$ with degree $d$.
\end{claim}

\begin{proof}
Define $S_u:=\{z\in V(G):e(\{z\},C_u)\ge3\}$ and $S_v:=\{z\in V(G):e(\{z\},C_v)\ge3\}$. Note that Lemma~\ref{lem:uv-structure}(b) implies $S_u\cap S_v=\emptyset$.

We assert that the vertices in \(S_u\) have pairwise distinct degrees. Let \(p,q\) be two vertices in $S_u$.
Choose two distinct vertices $a\in (N_G(p)\cap C_u)\backslash \{q\}$ and $\ b\in (N_G(q)\cap C_u)\backslash \{ q\}$.
Note that \(v\notin S_u\). If one of \(p,q\) is \(u\), then Lemma~\ref{lem:uv-structure}(d) yields $d(p)\ne d(q)$.
Now, assume that neither $p$ nor $q$ is $u$.
If \(\ell=2\),  since $p a u b q$ is a path of length four, we have $d(p)\ne d(q)$. Now, assume that \(\ell\ge3\). 

First, consider the case $p,q\notin\{w_1,x\}$. 
Applying Lemma~\ref{lem} (a) with $F=\{p,q,a,b,w_1,u\}$ and $k=2\ell-6,$ there is a path $xz_1\cdots z_{2\ell-6}$ with $z_{2\ell-6}\in A_u$, avoiding \(F\). Hence,
$paw_1xz_1\cdots z_{2\ell-6}ubq$
is a \(2\ell\)-path, and $d(p)\ne d(q)$.

Secondly, suppose that \(p=w_1\) and \(q\ne x\). By Lemma~\ref{lem}(a), there is a path $xz_1\cdots z_{2\ell-4}$ with $z_{2\ell-4}\in A_u$ avoiding $\{w_1,q,b,u\}$. Then
$w_1xz_1\cdots z_{2\ell-4}u b q$ is a \(2\ell\)-path, giving $d(p)\ne d(q)$.

Finally, suppose that \(p=x\). Since \(x=p\in S_u\), it has a neighbor in \(C_u\subseteq A_u\). 
Since $xw_1\in E(G)$, $xa\in E(G)$, and $a\in A_u$, Lemma~\ref{lem:uv-structure}(b) implies $w_1\in A_u$. Since $w_1\in A$, we have \(w_1\in A_u\setminus A_{u,4}\). 
If $q\ne w_1$, by Lemma~\ref{lem}(a), there is a $2\ell-4$-path $w_1z_1\cdots z_{2\ell-4}$ with $z_{2\ell-4}\in A_u$, avoiding $\{x, q,b,u\}$. So, $xuz_{2\ell-4}z_{2\ell-5}\cdots w_1bq$ is a \( 2\ell\)-path, and $d(p)\ne d(q)$.
If $q= w_1$, then Lemma~\ref{lem}(d) gives $d(w_1)\ne d(x)$, and the assertion holds. 
By symmetry, the vertices in \(S_v\) have pairwise distinct degrees.

By \eqref{Cuv} and the definitions of $S_u$ and $S_v$, for any $z\in V(G)\backslash (S_u\cup S_v)$, we have
\[
d_G(z)\le 2+2+|A_u\setminus C_u|+|A_v\setminus C_v|+|D|+2\le 2+2+2(c_2+2c_1+3)+2c_1+2=c_3,
\]
where we have used $|D|\le 2n-2\beta\le 2c_1$.
So, any vertex of degree greater than $c_3$ lies in $S_u\cup S_v$.
By the above assertion, every degree greater than $c_3$ occurs at most once in each of $S_u$ and $S_v$.
\end{proof}

We next show that there exists a vertex of degree $2n-\beta$. Suppose otherwise. 
By Lemma~\ref{lem:uv-structure}(c), we have $\Delta(G)\le 2n-\beta-1$. 
Note that $\beta\le\Delta(G)$ implies $\beta\le n-1$.
By the definition of $\beta$, each degree in $[\beta +1, 2n-\beta-1]$ occurs at most once. 
Moreover, by Claim~\ref{cl:high-degree}, each degree in $[c_3+1,\beta]$ occurs at most twice. 
Consequently,
\begin{equation}\label{d2d2c3}
2|E(G)|\le \sum_{d=\beta+1}^{2n-\beta-1} d + 2\left(\sum_{d=c_3+1}^{\beta} d\right) + (2c_3+1)c_3 = n^2 + (n-\beta)^2 - (n-\beta) + c_3^2,
\end{equation}
where we have used $(2n-2\beta-1) + 2(\beta-c_3) + (2c_3+1) = |V(G)|$. By Lemma~\ref{lem:n-beta-constant}, \eqref{d2d2c3} yields $2|E(G)|< n^2+n$, a contradiction. Hence, there exists a vertex, say $w_0$, such that $d(w_0) = 2n-\beta$.

\begin{claim}
    We have $\beta =n$.
\end{claim}
\begin{proof}
Suppose, to the contrary, that $\beta\le n-1$. Then, $w_0\notin \{u,v \}$.
By Lemma~\ref{lem:uv-structure}(b), we may assume, without loss of generality, that $N_G(w_0)\cap A_u=\emptyset$. If \( w_0\in A_v\), then
\[
d(w_0)\le (|A_v|-1)+|D|+1=2n-\beta+I_{uv}-2<2n-\beta,
\]
contradicting $d(w_0) = 2n-\beta$. If \(w_0\in D\), then
\[
d(w_0)\le |A_v|+|D|-1= 2n-\beta+I_{uv}-3 <2n-\beta,
\]
again a contradiction. Hence, we have \(w_0\in A_u\).
Then, $N_G(w_0)\subseteq A_v\cup D\cup\{u\}$. However,
\[
|A_v\cup D\cup\{u\}|=|A_v|+|D|+1
=2n-\beta+I_{uv}-1\le 2n-\beta =d(w_0).
\]
It follows that $I_{uv}=1$ and $N_G(w_0)=A_v\cup D\cup\{u\}$.
Therefore, Lemma~\ref{lem:uv-structure}(b) shows that any vertex in $A_v\cup D$ has no neighbors in $A_v$, since $w_0\in A_u$. That is, $e(A_v,A_v\cup D)=0$.
So, the degree of each vertex in $A_v$ is at most $|A_u|+1=\beta$.
Together with Lemma~\ref{lem:uv-structure}(d) and the fact that every vertex in $A_v$ is adjacent to both $v$ and $w_0$, we conclude that every vertex in $A_v$ has degree at least 2 and at most $\beta-1$.
Since $|A_v|=\beta -1$, by the Pigeonhole Principle, there exist two vertices, say $a$ and $b$, in $A_v$ of the same degree.

Recall that $y\in A_v\setminus A_{v,4}$. If $y\notin\{a,b\}$, then applying Lemma~\ref{lem}(a) with $F=\{a,b,v,w_0\}$ and $k=2\ell-4$ yields a path $yz_1\cdots z_{2\ell-4}$ with $z_{2\ell-4}\in A_v,$ that avoids $F$. Then,
$bw_0yz_1\cdots z_{2\ell-4}va$ is a $2\ell$-path with equal-degree endpoints, a contradiction. Without loss of generality, assume $a=y$. By Lemma~\ref{lem}(a), there is a path $yz_1\cdots z_{2\ell-2}$ with $z_{2\ell-2}\in A_v,$ avoiding $\{b,v\}$. Then $yz_1\cdots z_{2\ell-2}v b$
is a forbidden path of length $2\ell$. This completes the proof.
\end{proof}

Since \(\beta=n\), Lemma~\ref{lem:uv-structure} gives
$B=D=\emptyset$, $uv\in E(G)$, and $\Delta(G)\le n$.
Moreover, \(u\) and \(v\) are the only two vertices of degree \(n\).

We next show that $|A_u\setminus A_{u,4}|\ge 4$ and $|A_v\setminus A_{v,4}|\ge 4$.
By Lemma~\ref{lem}(d) and $\Delta(G)\le n$, we have $\sum_{z\in A_v\backslash A_{v,4}}d_G(z)\le (n^2+n)/2$.
If \(|A_u\setminus A_{u,4}|\le3\), then $\sum_{z\in A_u\backslash A_{u,4}}d_G(z)\le 3n$. It follows from Lemma~\ref{lem}(c) that
\[
2|E(G)|\le \frac{n^2+n}{2}+3n+d_G(u)+d_G(v)+\sum_{z\in Y}d_G(z)
\le \frac{n^2+n}{2}+3n+2n+60\ell n<n^2+n,
\]
for sufficiently large $n$, a contradiction. Thus, $|A_u\setminus A_{u,4}|\ge 4$. By symmetry, we have $|A_v\setminus A_{v,4}|\ge 4$.

\begin{claim}\label{nminus1}
There exist unique vertices $w_u\in A_u$ and $w_v\in A_v$ of degree \(n-1\). Moreover, we have $N_G(w_u)\cap A_u=\emptyset$ and $N_G(w_v)\cap A_v=\emptyset.$
\end{claim}

\begin{proof}
We first prove that at least two vertices in $G$ have degree $n-1$. Suppose otherwise. By Claim~\ref{cl:high-degree}, each degree in \([c_3+1,n-2]\) occurs at most twice, while degree \(n\) occurs exactly twice. Hence,
\[
2|E(G)|\le2n+(n-1)+2\left(\sum_{d=c_3+1}^{n-2}d\right)+(2c_3+1)c_3= n^2+c_3^2+1 < n^2+n,
\]
a contradiction. So, the degree \(n-1\) occurs at least twice. Indeed, by Claim~\ref{cl:high-degree}, there are exactly two vertices, say $w_u$ and $w_v$, of degree $n-1$.
By Lemma~\ref{B=kong}, we have $B_{w_u w_v}=\emptyset$. 
In particular, neither $u$ nor $v$ is a common neighbor of $w_u$ and $w_v$. So, $\{w_u,w_v\}\nsubseteq A_u$ and $\{w_u,w_v\}\nsubseteq A_v$. Without loss of generality, we may assume that $w_u\in A_u$ and $w_v\in A_v$. 


Now, it suffices to prove $N_G(w_u)\cap A_u=\emptyset$. 
Suppose otherwise. Then by Lemma~\ref{lem:uv-structure}(b), we have $N_G(w_u)\cap A_v=\emptyset$, and hence $N_G(w_u)=(A_u\setminus\{w_u\})\cup\{u\}$. 
Recall that $|A_u\setminus A_{u,4}|\ge 4$.
We can choose a vertex $h\in(A_u\setminus A_{u,4})\setminus\{w_u\}$. By Lemma~\ref{lem}(a), there exists a $(2\ell-4)$-path $P$ from $h$ to some $r\in A_u$ that avoids $\{w_u,w_v,u,v\}$. Then $w_u h P r u v w_v$ is a path of length $2\ell$ with equal-degree endpoints, a contradiction. Thus, $N_G(w_u)\cap A_u=\emptyset$. By symmetry, we have $N_G(w_v)\cap A_v=\emptyset$.
\end{proof}

By Claim~\ref{nminus1}, \(w_u\) is adjacent to all but one vertex, say $w_u'$, in \(A_v\). 
For any two vertices in $A_v$, at least one of them is adjacent to $w_u$. Hence, by Lemma~\ref{lem:uv-structure}(b), these two vertices are not adjacent, since $w_u\in A_u$.
It follows that $e(G[A_v])=0$. By symmetry, we have $e(G[A_u])=0$.

We next show that all vertices in $A_v$ have pairwise distinct degrees. Let $a,b$ be two vertices in $A_v$. 
Without loss of generality, assume that $b\ne w_u'$, and hence $w_ub\in E(G)$. Moreover, since $|A_v\setminus A_{v,4}|\ge4$, we may choose a vertex 
$h\in (A_v\setminus A_{v,4})\setminus\{a,b,w_u' \}$.
Applying Lemma~\ref{lem}(a), we find a path $P$ of length $2\ell-4$ from $h$ to some $r\in A_v$, avoiding $\{ a,b,v,w_u\}$. Then $bw_uhPrva$ is a path of length $2\ell$ with equal-degree endpoints, giving $d(a)\ne d(b)$. Thus, the vertices in $A_v$ have pairwise distinct degrees.
Recall that $\Delta(G)\le n$, and \(u,v\) are the only two vertices of degree \(n\).
So, we can label $A_v=\{v_1,\cdots,v_{n-1}\}$ such that \( d(v_j)=j\) for each $1\le j\le n-1$. By symmetry, we can label $A_u=\{u_1,\cdots,u_{n-1}\}$ such that \( d(u_j)=j\) for each $1\le j\le n-1$. 

Note that \(d(v_1)=1\) implies \(N(v_1)=\{v\}\). Since \(d(u_{n-1})=n-1\) and $e(G[A_u])=0$, we deduce that \(N(u_{n-1})=\{u\}\cup (A_v\setminus \{v_1\})\). Then $d(v_2)=2$ implies \(N(v_2)=\{v,u_{n-1}\}\). Since \(d(u_{n-2})=n-2\) and $e(G[A_u])=0$, exactly two vertices in \(A_v\) are not adjacent to $u_{n-2}$. It follows that \(N(u_{n-2})=\{u\}\cup (A_v\setminus \{v_1,v_2\})\). Repeating this argument, we obtain
$N(u_{n-i})=\{u\}\cup \big(A_v\setminus \{v_1,v_2,\dots,v_i\}\big)$,
for each \(i\in \{1,2,\dots,n-1\}\). The resulting graph is exactly the half graph \(H_n\) with bipartition \(\{u_1,\dots,u_{n-1},v\}\) and \(\{v_1,\dots,v_{n-1},u\}\). This completes the proof.
\end{proof}

\section*{Acknowledgements}
The authors wish to thank Xiangshuai Dong from Xiamen University for helpful discussions and valuable comments on an early draft of this paper, particularly on the 4-path case. Research supported by National Key Research and Development Program of China (Grant Nos. 2023YFA1010202) and National Natural Science Foundation of China (Grant Nos. 125B1009, 12371342).

\end{document}